\theoremstyle{plain}
\newtheorem{theorem}{Theorem}[section]
\newtheorem{lemma}[theorem]{Lemma}
\newtheorem{proposition}[theorem]{Proposition}
\newtheorem{corollary}[theorem]{Corollary}
\numberwithin{equation}{section}
\theoremstyle{definition}
\newtheorem{question}[theorem]{Question}
\newtheorem{definition}[theorem]{Definition}
\newtheorem{example}[theorem]{Example}
\newtheorem{remark}[theorem]{Remark}
\newcommand{\Mod}{{\textrm{-}\mathrm{Mod}}}
\newcommand{\Inj}{{\mathrm{Inj}}}
\DeclareMathOperator{\Aut}{Aut}
\DeclareMathOperator{\Sym}{Sym}
\DeclareMathOperator{\Homeo}{Homeo}
\DeclareMathOperator{\Hom}{Hom}
\DeclareMathOperator{\Stab}{Stab}
\newcommand{\OI}{{\mathrm{OI}}}
\newcommand{\FI}{{\mathrm{FI}}}
\newcommand{\op}{{\mathrm{op}}}
\newcommand{\dis}{{\mathrm{dis}}}
\newcommand{\Sh}{{\mathrm{Sh}}}
\title{Noetherianity of polynomial rings up to group actions}
\author{Liping Li}
\address{LCSM (Ministry of Education), School of Mathematics and Statistics, Hunan Normal University, Changsha 410081, China.}
\email{lipingli@hunnu.edu.cn}
\author{Yinhe Peng}
\address{Academy of Mathematics and Systems Science, Chinese Academy of Sciences\\ East Zhong Guan Cun Road No. 55\\Beijing 100190\\China}
\email{pengyinhe@amss.ac.cn}
\author{Zhengjun Yuan}
\address{School of Mathematics and Statistics, Hunan Normal University, Changsha 410081, China.}
\email{zhengjunyuan@hunnu.edu.cn}
\thanks{L. Li was partly supported by NSFC Grant No. 12171146. The authors wish to express their sincere gratitude to Andrew Snowden for providing detailed clarifications concerning some overlaps between this work and \cite{LS} as well as a simpler proof of Corollary \ref{highly homogenous result}, and to Shurui Yuan for suggestions that have contributed to the simplification of certain arguments. The authors are also indebted to the anonymous reviewer for a thorough examination of the manuscript and for numerous valuable comments, which have led to significant improvements in the present paper.}
\keywords{Skew group rings, polynomial rings, noetherianity, highly transitive, highly homogenous, sheaves, axiom of choice}
\begin{document}

\begin{abstract}
Let $k$ be a commutative Noetherian ring, and $k[S]$ the polynomial ring whose indeterminates are parameterized by elements in a set $S$. We show that $k[S]$ is Noetherian up to highly homogenous actions of groups. In particular, there is a special linear order $\leqslant$ on infinite $S$ such that $k[S]$ is Noetherian up to actions of $\Aut(S, \leqslant)$, and the existence of such a linear order for every infinite set is equivalent to the axiom of choice. These Noetherian results are proved via a sheaf theoretic approach based on Artin's theorem, the work of Nagel-R\"{o}mer in \cite{NR1}, and a classification of highly homogenous groups by Cameron in \cite{Ca76} .
\end{abstract}

\maketitle

\section{Introduction}

\subsection{Motivation}

Let $k$ be a commutative Noetherian ring, $S$ a set, and let $k[S] = k[x_s \mid s \in S]$ be the polynomial ring whose indeterminates are parameterized by elements in $S$. It is well known that $k[S]$ is Noetherian if and only if $S$ is a finite set. However, sometimes for practical purpose people need to study subposets of ideals satisfying extra properties, and ask whether the ascending chain condition holds for these subposets. Specifically, given a group $G$ acting on $k[S]$, is the poset of all $G$-invariant ideals of $k[S]$ Noetherian? Of course, when $G$ is the trivial group, this question has been completely answered by Hilbert's basis theorem. But for general groups, only sporadic results exist in the literature, among which includes the following classical theorem established and generalized by a few authors including Cohen, Aschenbrenner-Hillar, and Hillar-Sullivant: the poset of $\Sym(S)$-invariant ideals of $k[S]$ is Noetherian, where $\Sym(S)$ is the full permutation group on $S$. For details, see \cite{AH, Co, Dr, HS}.

Motivated by Cohen's theorem, people began to consider structures and behaviors of ideals invariant under actions of groups, and a significant progress along this approach has been achieved. For instances, Nagpal-Snowden \cite{NS} obtained a complete classification of all $\Sym(S)$-invariant prime ideals of $k[S]$ as well as an explicit description of its equivariant spectrum; Laudone-Snowden \cite{LS} proved an analogue of Cohen's theorem for chains of ideals parameterized by various finite combinatorial structures; Yu \cite{Y} proved several finiteness results for $k[S]$-modules invariant under the action of the parabolic subgroup of the infinite general linear group $\mathrm{GL} (\mathbb{C}^{\infty})$.

The main goal of this paper is to generalize Cohen's theorem from the specific group $\Sym(S)$ to general groups (in particular, subgroups of $\Sym(S)$) for the following two reasons. Firstly, the full permutation group is rather big, and consequently, the subposet of $\Sym(S)$-invariant ideals of $k[S]$ is very small compared to the poset of all ideals. Thus we want to find subgroups $G \leqslant \Sym(S)$ such that a similar theorem is valid. Secondly and more importantly, it is often the case that $S$ is equipped with extra combinatorial, algebraic or topological structure, and in this case we are more interested in permutation groups preserving this structure. For example, let $S = \mathbb{R}$ be the set of real numbers, and let $G$ be the group of all permutations preserving the natural order on $\mathbb{R}$, or the group of all self-homeomorphisms on $\mathbb{R}$. It has been shown in \cite{LS} that the poset of $G$-invariant ideals of $k[\mathbb{R}]$ satisfies the ascending chain condition.

\subsection{Main results}

Before describing the main results of this paper, let us give necessary notations and definitions. Let $G$ be a group acting on $S$ via a group homomorphism $\rho: G \to \Sym(S)$. We equip $S$ with the discrete topology, and impose a topology on $G$ such that the family of pointwise stabilizers $\Stab_G(T)$, where $T$ is a finite subset of $S$, forms a cofinal system of open subgroups. With respect to this topology $G$ becomes a topological group, and the action of $G$ on $S$ is continuous.\footnote{In some literature $S$ is called a continuous $G$-set. However, since it is often the case that $S$ may have a natural topology (for instance, $S = \mathbb{R}$), to avoid possible confusion, in this paper we call $S$ a discrete $G$-set to emphasize that $S$ is equipped with the discrete topology.} In particular, if the action of $G$ on $S$ is faithful, or equivalently, $\rho$ is an injective homomorphism, then this topology on $G$ is precisely the \textit{compact-open topology} or pointwise convergence topology; see \cite{Are, Hu} for details.

Note that the action of $G$ on $S$ induces a natural action on the polynomial ring $k[S]$ equipped with the discrete topology, where elements of $G$ act on $k[S]$ as algebra automorphisms. Thus one can define the skew group ring $k[S]G$, whose multiplication is defined via the following formula
\[
(ag) \cdot (bh) = a (g \cdot b) gh
\]
for $a, b \in k[S]$ and $g, h \in G$, where $g \cdot b$ is the image of $b$ under the action of $g$. Given a $k[S]G$-module $V$, we say that $V$ is a \textit{discrete} $k[S]G$-module if the action of $G$ on $V$ is continuous with respect to the discrete topology on $V$. The category $k[S]G \Mod^{\dis}$ of discrete $k[S]G$-modules is abelian. As we will explain later, it is equivalent to a sheaf category over a ringed site, and hence is a Grothendieck category with enough injective objects.

For an integer $n \in \mathbb{N}$, let $S^n$ be the Cartesian product of $n$ copies of $S$, $S^{(n)}$ the set of termwise distinct $n$-tuples, and $S^{\{n\}}$ the set of $n$-element subsets of $S$. Clearly, the action of $G$ on $S$ induces natural actions of $G$ on these sets. Following \cite[Definition 3.11]{BMMN} or \cite[Section 2.1]{Ca}, we say that the action of $G$ on $S$ is \textit{highly transitive} if $S^{(n)}$ is a transitive $G$-set for every $n \in \mathbb{N}$, \textit{highly homogenous} if $S^{\{n\}}$ is a transitive $G$-set for every $n \in \mathbb{N}$, and \textit{oligomorphic} if $S^n$ is a union of finitely many transitive $G$-sets for every $n \in \mathbb{N}$.

Our first main result is:

\begin{theorem} \label{main result 1}
If the action of $G$ on $S$ via a group homomorphism $\rho: G \to \Sym(S)$ is highly transitive, then one has the following equivalence
\[
k[S]G \Mod^{\dis} \simeq k[S]\tilde{G} \Mod^{\dis}
\]
where $\tilde{G} = \Sym(S)$. Moreover, every finitely generated discrete $k[S]G$-module is Noetherian. In particular, $k[S]$ as a $k[S]G$-module is Noetherian.
\end{theorem}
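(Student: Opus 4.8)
The plan is to reduce the Noetherian statements to the corresponding facts for the full symmetric group, using the categorical equivalence as a bridge. Since $\ker \rho \subseteq \Stab_G(T)$ for every finite $T \subseteq S$, every discrete $k[S]G$-module is annihilated by $\ker \rho$, so one may assume $\rho$ is injective and regard $G$ as a topological subgroup of $\tilde G = \Sym(S)$, with $\Stab_G(T) = G \cap \Stab_{\tilde G}(T)$; restriction along $G \hookrightarrow \tilde G$ then carries discrete $k[S]\tilde G$-modules to discrete $k[S]G$-modules. The heart of the first assertion is that high transitivity forces this restriction functor to be an equivalence. Given a discrete $k[S]G$-module $V$, a vector $v$ fixed by $\Stab_G(T)$, and an element $\sigma \in \tilde G$, transitivity of $G$ on $S^{(|T|)}$ furnishes some $g \in G$ with $g|_T = \sigma|_T$, and one sets $\sigma \cdot v := g \cdot v$; any other such $g'$ satisfies $g^{-1}g' \in \Stab_G(T)$ and hence fixes $v$, so the definition does not depend on the choice. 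A short verification --- in which $g$ is chosen to agree with $\sigma$ on an arbitrary prescribed finite superset of $T$ --- shows that this yields a continuous $\tilde G$-action on $V$ compatible with the $k[S]$-module structure, and that the construction is a two-sided inverse to restriction. In sheaf-theoretic terms, the two module categories are categories of sheaves over ringed sites whose underlying small categories record the $G$- and $\tilde G$-orbits on the sets $S^{(n)}$; high transitivity identifies these data, and the equivalence then follows from Artin's comparison theorem for sites.

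Granting the equivalence, the Noetherian claims reduce to the case $G = \tilde G = \Sym(S)$, where $k[S]\Sym(S)\Mod^{\dis}$ is identified with the category of modules over the polynomial $\FI$-algebra underlying $k[S]$ in the sense of Nagel--R\"omer \cite{NR1}. The Gr\"obner-theoretic Noetherianity results of \cite{NR1} then show that every finitely generated object in this category is Noetherian; since the equivalence is exact, finite generation and the ascending chain condition transport back to discrete $k[S]G$-modules, proving the second assertion. For the last statement, the $k[S]\Sym(S)$-submodules of $k[S]$ are exactly its $\Sym(S)$-invariant ideals, so one recovers Cohen's theorem (cf. \cite{Co, AH, HS}) as the special case $V = k[S]$.

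The step I expect to be the main obstacle is the precise dictionary between $k[S]\Sym(S)\Mod^{\dis}$ and the combinatorial module category of \cite{NR1}: one must check that the discreteness condition matches the finiteness built into $\FI$-modules, that $k[S]$ corresponds to the correct free (symmetric) object, and that the Noetherianity of \cite{NR1} is available --- or can be bootstrapped --- for arbitrary finitely generated modules rather than only for chains of invariant ideals. Confirming that the reconstructed $\tilde G$-action is indeed an action and is continuous, and that the equivalence is exact so that ``finitely generated'' and ``Noetherian'' are preserved in both directions, is routine but must be done with care; by contrast, once the sites are in place, the appeal to Artin's theorem is essentially formal.
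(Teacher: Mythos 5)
Your proposal is correct and its overall architecture coincides with the paper's: establish the equivalence $k[S]G\Mod^{\dis}\simeq k[S]\tilde{G}\Mod^{\dis}$ from high transitivity, then identify the $\tilde{G}=\Sym(S)$ case with the $\FI$-modules with varying coefficients of Nagel--R\"omer and quote their Noetherianity theorem \cite[Theorem 6.15]{NR1}. The one place where you genuinely diverge is the proof of the equivalence. The paper never extends the action by hand: it shows that high transitivity is equivalent to the orbit category $\mathcal{O}_G$ being isomorphic to $\FI_S^{\op}$ (Proposition \ref{characterization for FI}, via the extendability criterion of Lemma \ref{extendable maps}), so that $G$ and $\tilde{G}$ have the \emph{same} ringed site, and then invokes Artin's theorem (Theorem \ref{equivalence of categories}) to identify both module categories with $\Sh(\mathcal{O}_G,J_{at},\mathcal{A})$. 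Your route instead constructs the quasi-inverse to restriction directly: for $v$ fixed by $\Stab_G(T)$ and $\sigma\in\tilde{G}$ choose $g\in G$ agreeing with $\sigma$ on $T$ and set $\sigma\cdot v:=g\cdot v$; this is well defined, multiplicative, $k[S]$-semilinear and continuous, and on the way back one uses that $G\cap\Stab_{\tilde G}(T)$ is dense in the open (hence closed) subgroup $\Stab_{\tilde G}(T)$ to see that the extension of a restricted module recovers the original action. This is sound and more elementary, avoiding the site machinery for this step; what the paper's route buys is a uniform framework that is reused verbatim for the $\OI$/highly homogeneous case in Section 5, and, more importantly, the bookkeeping needed to apply \cite{NR1}: the matching of the structure sheaf with $\mathbf{X}^{\FI,1}$ and of $A(G/H_T)$ with $\mathbf{F}^{\FI,d}$ (Examples \ref{structure sheaves over FI} and \ref{coincidence}), the translation of finite generation and Noetherianity across the equivalence (Lemma \ref{equivalence of finitely generated modules}, Propositions \ref{generators of modules} and \ref{equivalence of Noetherianity}), and the passage between presheaf-level Noetherianity in \cite{NR1} and sheaf-level (i.e.\ $\mathcal{A}$-module) Noetherianity at the end of the proof of Theorem \ref{Noetherian wrt permutation groups}. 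That dictionary is precisely the step you flag as the ``main obstacle'' without carrying it out, so your identification of where the remaining work lies is accurate, and the paper's Section 3 is the answer to it rather than a different strategy.
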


\begin{remark}
Since $k[S]$ as a $k[S]G$-module is Noetherian if and only if the poset of $G$-invariant ideals is Noetherian, and since the action of $\Sym(S)$ on $S$ is obviously highly transitive, the last statement of the above theorem is equivalent to Cohen's theorem. We shall mention that this statement has already been established in \cite[Example 3.2(a)]{LS}.
\end{remark}

Now we impose some extra structure on $S$. As a first attempt, suppose that $S$ is equipped with an unbounded linear order $\leqslant$. We say that $\leqslant$ is a \textit{homogenous} linear order if for every two pairs $a < b$ and $c < d$ in $(S, \leqslant)$, one has $(a, b) \cong (c, d)$ as linearly ordered sets. It is \textit{globally homogeneous} if for every pair $a < b$ in $(S, \leqslant)$, one has $(a, b) \cong (S, \leqslant)$. Typical examples of globally homogeneous linearly ordered sets include $\mathbb{R}$ and $\mathbb{Q}$ with the natural order. For $\mathbb{R}$, this is clearly true; for $\mathbb{Q}$, this follows from the classical result due to Cantor: all countable dense linearly ordered sets without endpoints are isomorphic to $(\mathbb{Q}, \leqslant)$; see \cite[Theorem 9.3]{BMMN}. It is easy to see that $\leqslant$ is a homogenous linear order if and only if the action of $\Aut(S, \leqslant)$ on $(S, \leqslant)$ is highly homogenous.

\begin{theorem} \label{main result 2}
Let $\leqslant$ be a homogenous linear order on $S$, and suppose that the action of $G$ on $S$ via a group homomorphism $\rho: G \to \Aut(S, \leqslant)$ is highly homogenous. Then one has the following equivalence
\[
k[S]G \Mod^{\dis} \simeq k[S]\tilde{G} \Mod^{\dis}
\]
where $\tilde{G} = \Aut(S, \leqslant)$. Moreover, every finitely generated discrete $k[S]G$-module is Noetherian. In particular, $k[S]$ as a $k[S]G$-module is Noetherian.
\end{theorem}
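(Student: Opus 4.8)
The plan is to split the statement into two independent parts: the equivalence of module categories, which rests on a soft density argument, and the Noetherianity, which I would obtain by pushing the problem through the sheaf-theoretic dictionary of the introduction to the category of $\OI$-modules over a polynomial $\OI$-algebra and then quoting the Noetherianity theorem of Nagel--R\"omer \cite{NR1}.

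For the equivalence I would first check that $\rho(G)$ is dense in $\tilde G = \Aut(S,\leqslant)$ for the topology of pointwise convergence. This is exactly where the hypotheses cooperate: given a finite $T = \{t_1 < \cdots < t_n\} \subseteq S$ and any $\varphi \in \tilde G$, the image $\varphi(T)$ is again an $n$-element subset of $S$, so high homogeneity provides $g \in G$ with $\rho(g)(T) = \varphi(T)$; since $\rho(g)$ and $\varphi$ both preserve $\leqslant$, they must agree on $T$. As $\tilde G$ is a closed subgroup of $\Sym(S)$, this exhibits $\tilde G$ as the closure of $\rho(G)$, and then the same argument used to prove Theorem \ref{main result 1} --- discrete modules over a topological group coincide with discrete modules over any dense subgroup --- yields $k[S]G\Mod^{\dis} \simeq k[S]\tilde G\Mod^{\dis}$. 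Being an equivalence of abelian categories it matches finitely generated (resp. Noetherian) objects on the two sides, so it suffices to treat the case $G = \tilde G = \Aut(S,\leqslant)$.

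For that case I would use the sheaf-theoretic presentation of $k[S]\tilde G\Mod^{\dis}$ alluded to in the introduction, namely as sheaves of modules on a ringed site built from the poset of finite subsets of $S$ together with the $\tilde G$-action. The key point is that a homogenous linear order on an infinite set is automatically dense and (being unbounded) has no endpoints, so that $\tilde G$ is transitive on $n$-element subsets for every $n$ and, more importantly, is rich enough that every order-isomorphism between finite subsets of $S$ lifts to an element of $\tilde G$; consequently the $\tilde G$-quotient of this site is equivalent to the site attached to the category $\OI$ of finite totally ordered sets and order-preserving injections. Artin's comparison theorem for sites then turns this into an equivalence of the associated categories of sheaves of modules, identifying $k[S]\tilde G\Mod^{\dis}$ with the category of $\OI$-modules over the polynomial $\OI$-algebra $A$ corresponding to $k[S]$, and identifying the $k[S]\tilde G$-module $k[S]$ with a finitely generated (in fact cyclic) $A$-module.

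It then remains to invoke \cite{NR1}: Nagel--R\"omer prove that a polynomial $\OI$-algebra such as $A$ is Noetherian, i.e. every finitely generated $\OI$-module over it is Noetherian. Transporting this back through the equivalences above shows that every finitely generated discrete $k[S]\tilde G$-module, hence every finitely generated discrete $k[S]G$-module, is Noetherian, and in particular $k[S]$ is. I expect the main obstacle to be making the sheaf-theoretic dictionary precise and, within it, verifying that the $\tilde G$-quotient site really is the $\OI$-site: one must handle the fact that $S$ need not be countable (so $(S,\leqslant)$ need not be $(\mathbb{Q},\leqslant)$) and, above all, confirm that homogeneity of $\leqslant$ genuinely forces the extension property for partial order-isomorphisms between finite subsets --- this is precisely what prevents a more complicated indexing category from appearing, after which the rest is a formal transport of the Nagel--R\"omer theorem.
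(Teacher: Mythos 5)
Your proposal is correct and follows essentially the same route as the paper: identify discrete modules with sheaves over the orbit-category site, show that a highly homogeneous action makes this site the $\OI_S$-site with structure sheaf $\mathbf{X}^{\OI,1}$ (Proposition \ref{characterization of OI}), and then quote Nagel--R\"omer's Noetherianity theorem, your density observation being the paper's remark that the highly homogeneous subgroups of $\Aut(S,\leqslant)$ are exactly its dense subgroups (the paper simply proves Noetherianity for $G$ directly rather than reducing to $\tilde{G}$ first). The extension property you flag as the main obstacle is precisely Lemma \ref{elementary properties}, established by gluing interval isomorphisms, with the needed density and unboundedness supplied by Lemma \ref{unbounded and dense}.
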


\begin{remark}
The last statement of this theorem has been established in \cite[Example 3.2(c)]{LS}.
\end{remark}

Based on this result as well as a classification of highly homogenous groups obtained by Cameron in \cite{Ca76}, we can deduce the following result, extending Cohen's theorem from the highly transitive case to the highly homogenous case. This result was known to Snowden, who gave a simpler argument in a personal conversation with the authors.

\begin{corollary} \label{main corollary}
If the action of $G$ on $S$ via a group homomorphism $\rho: G \to \Sym(S)$ is highly homogenous, then $k[S]$ as a $k[S]G$-module is Noetherian.
\end{corollary}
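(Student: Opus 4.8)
The plan is to reduce the highly homogeneous case to the highly transitive case (Theorem \ref{main result 1}) by invoking Cameron's classification. Cameron's theorem in \cite{Ca76} asserts that if $G$ acts highly homogeneously but not highly transitively on an infinite set $S$, then $G$ preserves (or reverses, or preserves up to a cyclic/betweenness-type relation) a linear order on $S$; more precisely, the closure $\bar G$ in $\Sym(S)$ is one of finitely many groups, namely $\Aut(S,\leqslant)$, the group of order-preserving-or-reversing permutations, and the two analogous groups for the induced circular order, where $(S,\leqslant)$ is the countable dense order $\mathbb{Q}$ (or, in the uncountable setting, any globally homogeneous order). In every one of these cases $G$ contains, with index at most $2$, a subgroup acting highly homogeneously by order automorphisms of a homogeneous linear order on $S$ — indeed the order-preserving subgroup, and in the circular cases one passes to a linear order by cutting the circle at a point and noting the point-stabilizer still acts highly homogeneously on the complement, which suffices since an ideal invariant under a finite-index subgroup generates, via the finitely many translates, an invariant ideal of the larger group.

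First I would state Cameron's classification precisely as a cited lemma, being careful about the countable versus uncountable dichotomy: for countable $S$ the relevant order is $(\mathbb{Q},\leqslant)$, while for uncountable $S$ one needs a globally homogeneous order, whose existence is exactly the delicate point flagged in the abstract and is handled elsewhere in the paper. Second, in each non-highly-transitive case I would identify an explicit subgroup $H\leqslant G$ of index at most $2$ whose action on $S$ (or on $S$ minus a point, in the circular cases) is highly homogeneous via order automorphisms of a homogeneous linear order. Third, I would apply Theorem \ref{main result 2} to conclude that $k[S]$ is Noetherian as a $k[S]H$-module, i.e. the poset of $H$-invariant ideals satisfies the ascending chain condition. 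Fourth, I would run the standard finite-index descent: given an ascending chain $I_1\subseteq I_2\subseteq\cdots$ of $G$-invariant ideals, these are in particular $H$-invariant, so the chain stabilizes; hence $k[S]$ is Noetherian as a $k[S]G$-module. (The highly transitive case is immediate from Theorem \ref{main result 1}.)

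The main obstacle is the bookkeeping around Cameron's classification rather than any deep new idea: one must make sure the list of closed overgroups is stated in a form that genuinely covers all highly homogeneous but not highly transitive actions, that the "reduction to a point-stabilizer" in the circular cases really does yield a homogeneous \emph{linear} order on $S\setminus\{p\}$ to which Theorem \ref{main result 2} applies, and that dropping one point does not affect Noetherianity (it does not, since $k[S\setminus\{p\}]\hookrightarrow k[S]$ and an ascending chain of ideals in the larger ring contracts to one in the smaller, while extension back up is controlled). I would also need to be slightly careful that Theorem \ref{main result 2} as stated requires the order to be homogeneous (not merely that some order is preserved), so the cited classification must be invoked in the sharper form that the preserved order is $(\mathbb{Q},\leqslant)$-like; this is exactly what Cameron provides. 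Granting all of this, the corollary follows formally, which is presumably why Snowden's argument is "simpler" — it likely bypasses the module-category equivalences of Theorems \ref{main result 1} and \ref{main result 2} and argues directly with invariant ideals and finite-index subgroups.
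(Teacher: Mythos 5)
Your outline follows the same skeleton as the paper's proof: invoke Cameron's classification (Theorem \ref{classification}), reduce each non-highly-transitive case to Theorem \ref{main result 2}, and conclude via the trivial monotonicity that $G$-invariant ideals are invariant under any subgroup. However, the step you assert in passing --- that in the betweenness and circular/separation cases $G$ itself contains a subgroup (its order-preserving part, resp.\ a point stabilizer) acting highly homogeneously by automorphisms of a \emph{homogeneous} linear order, and that ``this is exactly what Cameron provides'' --- is precisely where the real work lies, and for uncountable $S$ it is not what Cameron provides. Cameron gives density of $G$ in $\Aut(S,\mathcal{B})$, $\Aut(S,\mathcal{K})$ or $\Aut(S,\mathcal{S})$ for a \emph{dense} relation, and density of the relation does not imply homogeneity of the underlying (or cut) linear order beyond the countable case, where Cantor's theorem identifies it with $\mathbb{Q}$. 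In the pure linear-order case homogeneity is indeed automatic (an order-preserving group transitive on $n$-subsets is transitive on increasing $n$-tuples, so Lemma \ref{elementary properties} applies), but in the betweenness case transitivity on $n$-subsets only gives that intervals are isomorphic up to order reversal, and in the circular and separation cases one must further prove that the point stabilizer still acts highly homogeneously on the cut order. The paper closes this gap by a different route: using Theorem \ref{equivalence of categories}, the orbit-category description, the downward L\"{o}wenheim--Skolem theorem and $\aleph_0$-categoricity, it reduces to the three concrete groups $\Aut(\mathbb{Q},\mathcal{B})$, $\Aut(\mathbb{Q},\mathcal{K})$, $\Aut(\mathbb{Q},\mathcal{S})$, and then quotes the structural facts from \cite{BMMN} (the index-two containments and the identification of the stabilizer of $0$ in $\Aut(\mathbb{Q},\mathcal{K})$ with $\Aut(\mathbb{Q}_0,\leqslant)$). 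Your proposal needs either this reduction or a direct argument for general $S$; note also that your ``index at most $2$'' framing does not cover the circular case (a point stabilizer has infinite index), and the remark about generating an invariant ideal from finitely many translates is backwards and unnecessary --- your step 4 correctly uses only the containment direction.

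There is a second, concrete flaw: your justification for ``dropping one point'' in the circular case does not work. If $I_1\subseteq I_2\subseteq\cdots$ is a chain of invariant ideals of $k[S]$, stabilization of the contractions $I_j\cap k[S\setminus\{p\}]$ does not imply stabilization of the $I_j$ (the contractions forget everything involving $x_p$), and ``extension back up is controlled'' is not an argument. The paper's fix is to absorb the deleted variable into the coefficient ring: $k[S]=k[x_p][S\setminus\{p\}]$, the point stabilizer fixes $x_p$, and Theorem \ref{Noetherian wrt homogeneous order} holds over the Noetherian ring $k[x_p]$, so $k[S]$ itself is Noetherian up to the stabilizer, hence up to $G$. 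With these two repairs your outline essentially becomes the paper's proof.
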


Motivated by Theorem \ref{main result 2}, one may ask the following natural question: can we impose a homogeneous linear order or even a globally homogenous linear order on any nonempty set $S$? The answer is certainly no for finite sets. For infinite sets, we obtain the following surprising answer.

\begin{theorem} \label{main result 3}
The statement that every infinite set admits a globally homogeneous linear order is equivalent to the axiom of choice.
\end{theorem}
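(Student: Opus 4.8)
The plan is to prove the two implications separately, since the statement is an equivalence.

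\emph{From the axiom of choice to the existence of globally homogeneous orders.} Assume the axiom of choice and let $S$ be infinite. Choosing a bijection of $S$ with an infinite cardinal $\kappa = |S|$, it suffices to construct a globally homogeneous linear order of cardinality $\kappa$ and transport it along the bijection. I would build one by a transfinite back-and-forth of length $\kappa$: using the well-order of $\kappa$, fix a bookkeeping of all ``tasks'', where a task demands that a given finite partial order-isomorphism between an open interval of the order under construction and the whole order under construction be extended by one more point; then build an increasing chain $(L_\alpha)_{\alpha<\kappa}$ of dense linear orders without endpoints, discharging the $\alpha$-th task at stage $\alpha$ by adjoining fewer than $\kappa$ new points — in particular, whenever the slot required to extend the isomorphism is currently empty, create a point there, i.e.\ fill the offending Dedekind cut — and adjoining at least one point at every stage to force $|L| = \kappa$. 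Then $L := \bigcup_{\alpha<\kappa} L_\alpha$ is a dense linear order without endpoints of cardinality $\kappa$, and the bookkeeping makes every open interval of $L$ order-isomorphic to $L$. (One should keep in mind that naive candidates do not work: $\mathbb{Q}^{(\kappa)}$, the finite-support functions $\kappa\to\mathbb{Q}$ with the first-difference lexicographic order, is $\kappa$-dense but not globally homogeneous, because the coinitiality of its open intervals is $\operatorname{cf}(\kappa)$, which for regular uncountable $\kappa$ differs from the coinitiality $\aleph_0$ of the whole order; the recursive construction succeeds exactly because it repairs such cuts as it goes.) For singular $\kappa$ one needs a little more care to control $|L_\alpha|$, but the scheme is unchanged.

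\emph{From the existence of globally homogeneous orders to the axiom of choice.} Conversely, assume every infinite set carries a globally homogeneous linear order; I would deduce the well-ordering theorem. Let $X$ be arbitrary; we may assume it infinite and, aiming at the case of interest, not already well-orderable, so that $|X|$ is incomparable with the Hartogs number $\aleph := \aleph(X)$. Apply the hypothesis to the infinite set $Y := X \sqcup \aleph$ to get a globally homogeneous, hence dense and highly homogeneous, linear order $\leqslant$ on $Y$. The plan is then to use the position of each $x \in X$ relative to the large, canonically well-ordered subset $\aleph$ to attach to $x$ an invariant — for instance extracted from the $\leqslant$-cut $x$ determines on $\aleph$, or from iterating the ``open interval $\cong$ whole order'' isomorphisms of $\leqslant$ along the well-order of $\aleph$ — that separates the points of $X$ and takes values in a well-orderable set; then $X$ injects into a well-orderable set, so it was well-orderable after all, which gives the well-ordering theorem and hence the axiom of choice. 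This is in the spirit of the classical theorems that ``every infinite set admits a group (or field) structure'' implies the axiom of choice, with the Hartogs number playing its usual role; but a genuinely new idea is required, because a pure linear order has trivial definable closure, so one cannot simply argue that $X$ lies in a substructure generated by $\aleph$, and it is the self-similarity (global homogeneity), not merely the homogeneity, of $\leqslant$ that has to be exploited.

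\emph{Main obstacle.} The forward implication is a standard, if fiddly, transfinite recursion. The real difficulty is the second implication: manufacturing, from a globally homogeneous order on $X \sqcup \aleph(X)$, an invariant of a point that both distinguishes the elements of $X$ and lives in a well-orderable set. I expect this to be the heart of the argument.
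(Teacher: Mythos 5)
Your forward direction (AC $\Rightarrow$ existence) is fine in outline and takes a genuinely different route from the paper: you build the order by a transfinite back-and-forth of length $\kappa$, whereas the paper makes global homogeneity first-order expressible (an ordered field with affine isomorphisms between bounded intervals and a function symbol $h:(0,1)\to S$) and then quotes the L\"{o}wenheim--Skolem theorem starting from the countable model $\mathbb{Q}$. That part is not where the problem lies.

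The genuine gap is the reverse implication, which you yourself flag as missing the key idea, and the plan you sketch points in a direction that would not work. Attaching to each $x\in X$ the cut it determines on the well-ordered part $\aleph(X)$ does not produce an injection of $X$ into a well-orderable set: distinct elements of $X$ can determine the same cut on $\aleph(X)$ (nothing prevents two points of $X$ from being $\leqslant$-adjacent over the ordinals), and even if it did separate points, the set of Dedekind cuts of a linear order on a well-orderable set need not be well-orderable in ZF (the cuts of $\mathbb{Q}$ already give $\mathbb{R}$), so the target is not well-orderable either; global homogeneity does not repair either defect. The argument that actually closes the gap runs in the opposite direction: take $\kappa=\aleph(P(X))$, the Hartogs number of the \emph{power set}, impose a globally homogeneous order on $X\cup\kappa$, and consider $f:\kappa\to P(X)$, $f(\alpha)=\{x\in X \mid x<\alpha\}$. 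By the defining property of $\kappa$ this map cannot be injective, so there are $\alpha\neq\beta$ with $f(\alpha)=f(\beta)$; then the open interval between $\alpha$ and $\beta$ contains no element of $X$, hence lies inside $\kappa$ and is well-orderable, and global homogeneity gives an order-isomorphism (in particular a bijection) from this interval onto all of $X\cup\kappa$, which transports a well-order onto $X$. So the well-ordered part is used to pigeonhole an $X$-free interval whose well-order is then spread to the whole set, rather than to tag elements of $X$ with invariants; without this (or some comparable) idea your second implication remains unproved.
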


\subsection{Questions}

There are still quite a lot interesting questions unsolved in this paper.

For $n \in \mathbb{N}$, define function $f: \mathbb{N} \to \mathbb{N} \cup \{ \infty \}$ via setting $f(n)$ to be the number of $G$-orbits of $S^{(n)}$, called the \textit{growth function} of the pair $(G, S)$. It is easy to see that the action of $G$ on $S$ is highly transitive (resp., oligomorphic) if and only if $f(n) = 1$ (resp, $f(n) < \infty$) for $n \geqslant 1$. We can reformulate the first statement of Theorem \ref{main result 1} as follows: $k[S]G \Mod^{\dis}$ is equivalent to $k[S]\tilde{G} \Mod^{\dis}$ for $\tilde{G} = \Sym(S)$ if and only if the growth function for $(G, S)$ is the same as that for $(\tilde{G}, S)$. For the only if part, see Proposition \ref{characterization for FI}.

\begin{question}
Does this reformulation extend to the general case? Explicitly, given a group $G$ and a subgroup $H \leqslant G$ such that the action of $H$ on $S$ is oligomorphic, can we obtain a necessary and sufficient condition for the equivalence $k[S]G \Mod^{\dis} \simeq k[S]H \Mod^{\dis}$ in terms of a relation between the growth function for $(G, S)$ and that for $(H, S)$? As a specific example, the diagonal action of $\tilde{G} = \Sym(S)$ on $\tilde{S} = S \sqcup S$ is not highly transitive, but oligomorphic. Can we classify subgroups $G \leqslant \Sym(S)$ such that $k[\tilde{S}]G \Mod^{\dis} \simeq k[\tilde{S}] \tilde{G} \Mod^{\dis}$? In particular, does this equivalence hold if and only if the growth function for $(G, \tilde{S})$ asymptotically coincides with that for $(\tilde{G}, \tilde{S})$?

We shall mention that a similar question has been proposed in \cite[Remark 3.7]{LS}.
\end{question}

By Example \ref{example}, the conclusion of Corollary \ref{main corollary} fails for oligomorphic groups; on the other hand, the condition that $G$ is highly homogenous on $S$ is not necessary for the conclusion. Thus one may ask the following question:

\begin{question}
Find a combinatorial condition in terms of the action of $G$ on $S$, stronger than the oligomorphic condition but weaker than the highly homogeneous condition, which is equivalent to the Noetherianity of $k[S]$ up to the action of $G$.
\end{question}

The following question is related to Theorem \ref{main result 3}.

\begin{question}
Consider the following statements:
\begin{enumerate}
\item every infinite set admits a globally homogenous linear order;

\item every infinite set admits a homogenous linear order;

\item every infinite set admits a dense linear order.
\end{enumerate}
It is clear from the definitions that (1) implies (2). We will show that every homogeneous linear order is dense and unbounded in Lemma \ref{unbounded and dense}, so (2) implies (3). Theorem \ref{main result 3} asserts that (1) is equivalent to the axiom of choice. Furthermore, it is also known that (3) is not strong enough to imply the axiom of choice; see \cite[Theorem 1 and Corollary 13]{Gonzalez} or \cite{Pin}. One may wonder whether (2) implies the axiom of choice. At this moment it is still unknown to the authors.
\end{question}

\subsection{The strategy}

The technical strategy to establish the above mentioned Noetherian results is based on a few observations, a theorem of Artin connecting sheaves of modules over ringed atomic sites and discrete representations of topological groups, and the work of Nagel and R\"{o}mer \cite{NR1} on modules over the category $\FI$ and $\OI$ with varying coefficients.

Given a topological group $G$ with a cofinal system of open subgroups, one can construct an orbit category $\mathcal{O}_G$. For a commutative ring $A$ equipped with discrete topology such that elements of $G$ acts as algebra automorphisms and the action is continuous, one can define a sheaf $\mathcal{A}$ of commutative rings over the Grothendieck site $(\mathcal{O}_G, J_{at})$ where $J_{at}$ is the atomic Grothendieck topology. Similarly, given a discrete $AG$-module $V$, one can define a sheaf $\mathcal{V}$ of modules over the ringed site $(\mathcal{O}_G, J_{at}, \mathcal{A})$. Artin's theorem then asserts that the category of discrete $AG$-modules is equivalent to the category $\Sh(\mathcal{O}_G, J_{at}, \mathcal{A})$ of sheaves; see Theorem \ref{equivalence of categories}.

Now let $A = k[S]$ and $G$ a group acting on $S$. We show that the action of $G$ on $S$ is highly transitive if and only if under a mild assumption the orbit category $\mathcal{O}_G$ is isomorphic to the opposite category of the category $\FI_S$ whose objects are finite sets of $S$ and morphisms are injections; see Proposition \ref{characterization for FI}. In this case, an explicit computation shows that the structure sheaf $\mathcal{A}$ over $(\mathcal{O}_G, J_{at})$ coincides with the functor $\mathbf{X}^{\FI, 1}$ in \cite[Definition 2.17]{NR1}. Then Theorem \ref{main result 1} follows from \cite[Theorem 6.15]{NR1}.

Similarly, suppose that $\leqslant$ is a linear order on $S$. Then there exists a group whose action on $(S, \leqslant)$ is highly homogenous if and only if $\leqslant$ is homogenous, and if and only if under a mild assumption the orbit category $\mathcal{O}_G$ is isomorphic to the opposite category of the category $\OI_S$ whose objects are finite subsets of $S$ and morphisms are order-preserving injections; see Lemma \ref{elementary properties} and Proposition \ref{characterization of OI}. In this case, the structure sheaf $\mathcal{A}$ coincides with the functor $\mathbf{X}^{\OI, 1}$ in \cite[Definition 2.17]{NR1}. Then Theorem \ref{main result 2} follows from \cite[Theorem 6.15]{NR1} as well.

\subsection{Organization}

The paper is organized as follows. In Section 2 we describe some background knowledge on sheaf theory over ringed sites, and establish an equivalence between the category of discrete modules over the skew group ring $AG$ and the category of sheaves over the ringed site $(\mathcal{O}_G, J_{at}, \mathcal{A})$. Using this equivalence, we define notions such as finite generation and Noetherianity in these two categories, and describe some elementary properties. Actions of highly transitive groups on the polynomial ring is investigated in Section 4, where we prove Theorem \ref{main result 1}. In Section 5 we consider actions of order-preserving permutation groups on the polynomial ring, and prove Theorems \ref{main result 2} and \ref{main result 3} as well as some corollaries.

Throughout this paper all rings are unital rings with a multiplicative identity, modules are left modules, and functors are covariant functors unless otherwise specified. Composition of maps or morphisms is always from right to left.

\section{Preliminaries}

For the convenience of the reader, in this section we describe some background knowledge on orbit categories, Grothendieck topologies, and topos theory. The reader can refer to \cite{Ar, AGV, Car, Jo, KS, MM} for more details.

Let $G$ be a topological group, and let $\mathcal{H}$ be a \textit{cofinal system} of open subgroups of $G$; that is, for $H \in \mathcal{H}$ and $K \in \mathcal{H}$, one can find a member $L \in \mathcal{H}$ such that $L \leqslant H \cap K$. The \textit{orbit category} $\mathcal{O}_G$ of $G$ with respect to $\mathcal{H}$ is defined as follows: objects are left cosets $G/H$ with $H \in \mathcal{H}$, and morphisms from $G/H$ to $G/K$ are $G$-equivariant maps. It is well known that these maps are induced by elements $g \in G$. That is, every map $G/H \to G/K$ is of the form $\sigma_g: xH \mapsto xg^{-1}K$ where $g$ is an element in $G$ satisfying $gHg^{-1} \leqslant K$. The map $g \mapsto \sigma_g$ defines a surjective map
\[
N_G(H, K) = \{g \in G \mid gHg^{-1} \subseteq K \} \longrightarrow \mathcal{O}_G(G/H, G/K),
\]
and $\sigma_g = \sigma_h$ for $g, h \in G$ if and only if $Kg = Kh$. For more details, see \cite[Section 2]{Webb}.

Let $A$ be a commutative ring equipped with the discrete topology. Then $A^A$, the set of all maps from $A$ to itself, is equipped with the product topology. Let $\Aut(A)$ be the group of ring automorphisms of $A$, which as a subset of $A^A$ is equipped with the subspace topology. It is easy to check that with respect to this topology $\Aut(A)$ becomes a topological group.

Let $\rho: G \to \Aut(A)$ be a continuous group homomorphism; that is, $\rho$ is a group homomorphism such that for every $a \in A$, the stabilizer subgroup
\[
H_a = \{ g \in G \mid g \cdot a = \rho (g)(a) = a \}
\]
is an open subgroup of $G$. In this case we say that $G$ acts \textit{discretely} on $A$ via $\rho$ (or for short $G$ acts discretely on $A$ when $\rho$ is clear from the context), and define the \textit{skew group ring} $AG$ whose multiplication is given as follows:
\[
(ag) (bh) = a (g \cdot b) gh
\]
for $a, b \in A$ and $g, h \in G$. An $AG$-module $V$ is called \textit{discrete} if $V$ is equipped with the discrete topology and the action of $G$ on it is continuous; that is, given $v \in V$, the stabilizer subgroup $H_v = \{ g \in G \mid g \cdot v = v \}$ is an open subgroup of $G$. Denote the category of discrete $AG$-modules by $AG \Mod^{\dis}$. This is an abelian category since submodules and quotient modules of discrete modules are still discrete.

Now we turn to sheaf theory over atomic sites, for which more details can be found in \cite[Section 2]{DLLX}. Let $J_{at}$ be the atomic Grothendieck topology on $\mathcal{O}_G$. Given a commutative ring $A$ on which $G$ acts discretely, we define a functor $\mathcal{A}$ from $\mathcal{O}_G^{\op}$ to the category of commutative rings as follows. For an object $G/H$ in $\mathcal{O}_G$, we set
\[
\mathcal{A}(G/H) = A^H = \{a \in A \mid h \cdot a = a, \, \forall h \in H \}.
\]
Given a morphism $\sigma_g: G/H \to G/K$ represented by $g \in G$ such that $gHg^{-1} \leqslant K$, the corresponded map $\mathcal{A}(\sigma_g): A^K \to A^H$ is defined by sending $a \in A^K$ to $g^{-1} \cdot a$. It easy to check that $g^{-1} \cdot a$ is fixed by every element in $H$, and $\mathcal{A}(\sigma_g)$ is a ring homomorphism. Thus $\mathcal{A}$ is a presheaf of commutative rings over $\mathcal{O}_G$. Furthermore, by \cite[Thereom III.9.1]{MM}, it is actually a sheaf over the site $(\mathcal{O}_G, J_{at})$. This construction is functorial since up to isomorphism it is exactly the following functor
\[
A \mapsto \Hom_G(-, A).
\]
Thus we obtain a functor $\phi$ from the category of commutative rings on which $G$ acts discretely to the category of structure sheaves over the site $(\mathcal{O}_G, J_{at})$.

Conversely, given a structure sheaf $\mathcal{A}$ of commutative rings over the site $(\mathcal{O}_G, J_{at})$, we define a commutative ring
\[
A = \varinjlim_{H \in \mathcal{H}} \mathcal{A}(G/H)
\]
where the colimit $\psi = \varinjlim_{H \in \mathcal{H}}$ is taken over the poset of open subgroups in $\mathcal{H}$ and inclusions. Furthermore, one can define an action of $G$ on $A$ as follows: for each $a \in A$ and $g \in G$, choose a representative $\tilde{a} \in \mathcal{A}(G/H)$ for a certain $H \in \mathcal{H}$. Then $g$ induces a morphism
\[
\sigma_g: G/g^{-1}Hg \to G/H
\]
in $\mathcal{O}_G$ and hence a ring homomorphism
\[
\mathcal{A}(\sigma_g): \mathcal{A}(G/H) \to \mathcal{A}(G/g^{-1}Hg).
\]
We define $g \cdot a$ to be the equivalence class in $A$ represented by $\mathcal{A}(\sigma_g) (\tilde{a}) \in \mathcal{A}(G/g^{-1}Hg)$.

The proof of \cite[Theorem III.9.1]{MM} shows that the above action is well defined and discrete. We check that it also respects the ring structure of $A$.

Firstly, elements in $G$ act on $A$ as ring automorphisms. Indeed, given $a, b \in A$, one can choose representatives $\tilde{a} \in \mathcal{A}(G/H)$ and $\tilde{b} \in \mathcal{A}(G/K)$. Since $\mathcal{H}$ is a cofinal system of open subgroups, we can find a certain open subgroup $L \in \mathcal{H}$ such that $L \leqslant H \cap K$, and ring homomorphisms
\[
\mathcal{A}(G/H) \to \mathcal{A}(G/L), \quad \mathcal{A}(G/K) \to \mathcal{A}(G/L)
\]
induced by the natural surjections
\[
G/L \to G/H, \quad G/L \to G/K.
\]
Thus without loss of generality one can assume that both $\tilde{a}$ and $\tilde{b}$ are contained in $\mathcal{A}(G/L)$. Given $g \in G$, let $\sigma_g: G/g^{-1}Lg \to G/L$ be the induced morphism in $\mathcal{O}_G$. Then
\[
\mathcal{A} (\sigma_g) (\tilde{a} \tilde{b}) = \mathcal{A} (\sigma_g) (\tilde{a}) \mathcal{A} (\sigma_g) (\tilde{b})
\]
since $\mathcal{A} (\sigma_g)$ is a ring homomorphism. By taking equivalence classes, one has $g \cdot (ab) = (g \cdot a) (g \cdot b)$, so $g$ acts as a ring automorphism on $A$ as desired.

Secondly, given a natural transformation $\alpha: \mathcal{A} \to \mathcal{A}'$, one obtains a ring homomorphism $\psi(\alpha): A \to A'$ which is compatible with the action of $G$; that is, $\psi(\alpha) (g \cdot a) = g \cdot (\psi(\alpha)(a))$ for $g \in G$ and $a \in A$. Thus $\psi$ is a functor from the category of structure sheaves of commutative rings over the site $(\mathcal{O}_G, J_{at})$ to the category of commutative rings on which $G$ acts discretely.

In conclusion, we have the following result.

\begin{proposition} \label{G-rings and structure sheaves}
The functors $\phi$ and $\psi$ give an equivalence between the category of structure sheaves of commutative rings over the site $(\mathcal{O}_G, J_{at})$ and the category of commutative rings on which $G$ acts discretely.
\end{proposition}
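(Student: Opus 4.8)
The plan is to bootstrap from the underlying set-level statement and then carry the ring structure along. The key external input is the equivalence of \cite[Theorem III.9.1]{MM} between the category $\Sh(\mathcal{O}_G, J_{at})$ of sheaves of sets over the atomic orbit site and the category of discrete $G$-sets: under this equivalence a sheaf $\mathcal{F}$ corresponds to $\varinjlim_{H \in \mathcal{H}} \mathcal{F}(G/H)$ with the $G$-action described before the statement, and a discrete $G$-set $X$ corresponds to the sheaf $\Hom_G(-, X) = (G/H \mapsto X^H)$. Forgetting multiplication, $\phi$ and $\psi$ are exactly these two functors, so at the level of underlying sets they are already mutually quasi-inverse; in particular there are natural bijections $\eta_A \colon A \to \psi(\phi(A))$, sending $a$ to its class in $\varinjlim_H A^H$, and $\varepsilon_{\mathcal{A}} \colon \phi(\psi(\mathcal{A})) \to \mathcal{A}$, whose component at $G/H$ identifies $\big( \varinjlim_K \mathcal{A}(G/K) \big)^H$ with $\mathcal{A}(G/H)$. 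Note that $\eta_A$ is a bijection precisely because the $G$-action on $A$ is discrete (each $a$ lies in some $A^H$ with $H \in \mathcal{H}$, using cofinality of $\mathcal{H}$) and because distinct elements of $A$ remain distinct in every $A^H$.

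Next I would check that $\phi$ and $\psi$ are well defined as functors valued in commutative rings. For $\phi$ this is the content of the paragraph defining $\mathcal{A}$: each $A^H$ is a subring of $A$, the transition maps $a \mapsto g^{-1} \cdot a$ are ring homomorphisms because $G$ acts on $A$ by ring automorphisms, so $\phi(A)$ is a presheaf and in fact a sheaf of commutative rings, and a $G$-equivariant ring map $A \to A'$ restricts to a morphism of such sheaves, functoriality being immediate. For $\psi$ this is exactly what was verified just above the statement: $\varinjlim_H \mathcal{A}(G/H)$ carries a natural commutative ring structure on which $G$ acts by ring automorphisms, and a natural transformation of sheaves of rings induces a $G$-equivariant ring homomorphism on colimits.

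It then remains to promote $\eta$ and $\varepsilon$ from isomorphisms of $G$-sets to isomorphisms in the category of rings with discrete $G$-action and in the category of sheaves of rings, respectively. This is routine: the ring operations on $\varinjlim_H A^H$ are computed by transporting representatives into a common term $A^L$ (possible by cofinality) and using the operations of $A$ there, so $\eta_A$ is a ring isomorphism; it is $G$-equivariant by the description of the induced action on a colimit sheaf, and natural in $A$. Dually, the component of $\varepsilon_{\mathcal{A}}$ at $G/H$ is, on each term $\mathcal{A}(G/K)$ with $K \leqslant H$, just a transition map of $\mathcal{A}$ and hence a ring homomorphism, compatible with the transition maps of $\phi(\psi(\mathcal{A}))$; naturality is inherited from the set-level counit, and the triangle identities hold because they hold for the underlying sets. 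Combining these steps yields an adjoint equivalence $(\phi, \psi, \eta, \varepsilon)$, which is the assertion.

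I expect the only nontrivial ingredient to be the identification $\phi(\psi(\mathcal{A})) \cong \mathcal{A}$, i.e.\ that the sections of a sheaf over the atomic orbit site are recovered as the fixed points of the colimit of its sections; but this is exactly \cite[Theorem III.9.1]{MM}, and everything else is the bookkeeping needed to see that the bijections supplied by that theorem respect the subring-of-$A$ structure on one side and the colimit-of-rings structure on the other.
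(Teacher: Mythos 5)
Your proposal is correct and follows essentially the same route as the paper: invoke \cite[Theorem III.9.1]{MM} for the underlying equivalence between sheaves on the atomic orbit site and discrete $G$-sets, and then verify that $\phi$, $\psi$, and the resulting natural isomorphisms respect the ring structures, which is exactly the bookkeeping carried out in the paragraphs preceding the proposition. The only difference is presentational: you organize the verification around the unit and counit of an adjoint equivalence, while the paper simply records the ring-level checks and cites the set-level theorem.
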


\begin{proof}
The conclusion follows from \cite[Theorem III.9.1]{MM} and the above argument.
\end{proof}

By modifying the above proof, we obtain the following result.

\begin{theorem} \label{equivalence of categories}
Let $A$ be a commutative ring on which a topological group $G$ acts discretely, $\mathcal{O}_G$ the orbit category of $G$ with respect to a cofinal system $\mathcal{H}$ of open subgroups, and $\mathcal{A}$ the corresponded structure sheaf over the site $(\mathcal{O}_G, J_{at})$. Then the functor $\phi$ and $\psi$ give an equivalence between $AG \Mod^{\dis}$, the category of discrete $AG$-modules, and $\Sh(\mathcal{O}_G, J_{at}, \mathcal{A})$, the category of sheaves of modules over the ringed site $(\mathcal{O}_G, J_{at}, \mathcal{A})$.
\end{theorem}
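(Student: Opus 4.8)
The plan is to bootstrap from Proposition \ref{G-rings and structure sheaves}, or rather from the topos-theoretic statement underlying it: the Mac Lane--Moerdijk equivalence $T \colon \Sh(\mathcal{O}_G, J_{at}) \xrightarrow{\ \sim\ } G\text{-}\mathrm{Set}^{\dis}$ between sheaves of sets on the atomic site and discrete $G$-sets, which on objects is given by $\mathcal{F} \mapsto \varinjlim_{H \in \mathcal{H}} \mathcal{F}(G/H)$ with quasi-inverse $X \mapsto \Hom_G(-, X)$; see \cite[Theorem III.9.1]{MM}. The structural fact I would exploit is that $T$ is an equivalence of Grothendieck toposes, hence exact and cocontinuous, so that it carries internal ring objects to internal ring objects and, for a fixed internal ring object $\mathcal{R}$, restricts to an equivalence between the category of internal $\mathcal{R}$-modules in $\Sh(\mathcal{O}_G, J_{at})$ and the category of internal $T(\mathcal{R})$-modules in $G\text{-}\mathrm{Set}^{\dis}$. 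Naturality of the constructions $\phi$ and $\psi$ on the underlying (pre)sheaves of sets is what makes this module-level statement legitimate.

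Granting this, the first step is to observe that a sheaf of modules over the ringed site $(\mathcal{O}_G, J_{at}, \mathcal{A})$ is exactly an internal $\mathcal{A}$-module object in the topos $\Sh(\mathcal{O}_G, J_{at})$, and that under $T$ the ring object $\mathcal{A}$ corresponds to the commutative ring $A$ with its discrete $G$-action --- this last point being precisely the content of Proposition \ref{G-rings and structure sheaves}. The second step is to identify an internal $A$-module object in $G\text{-}\mathrm{Set}^{\dis}$ with a discrete $AG$-module: such an object is an abelian group $V$ equipped with a discrete $G$-action together with an $A$-module structure whose scalar multiplication $A \times V \to V$ is $G$-equivariant, i.e. $g \cdot (av) = (g \cdot a)(g \cdot v)$ for all $g \in G$, $a \in A$, $v \in V$; unwinding, this is precisely a left $AG$-module on which $G$ acts continuously for the discrete topology, that is, an object of $AG\Mod^{\dis}$. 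Composing $T$ (restricted to module objects) with these two identifications yields the asserted equivalence, and by construction the resulting functors are $\phi$ and $\psi$: a discrete $AG$-module $V$ is sent to the sheaf $G/H \mapsto V^H$ with the evident $\mathcal{A}$-module structure, and a sheaf $\mathcal{V}$ is sent to $\varinjlim_{H \in \mathcal{H}} \mathcal{V}(G/H)$ with the $A$-action induced from the $\mathcal{A}(G/H)$-actions.

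For readers who prefer to avoid internal language, the alternative is simply to rerun the proof of Proposition \ref{G-rings and structure sheaves} verbatim, replacing ``ring'' by ``module'' and carrying the extra module datum along at each step: one checks that $G/H \mapsto V^H$ is a sheaf of $\mathcal{A}$-modules (separatedness and the gluing condition for the atomic topology are inherited from the underlying sheaf of sets exactly as in \cite[Theorem III.9.1]{MM}, while the transition map $V^K \to V^H$ attached to $\sigma_g$, namely $a \mapsto g^{-1} \cdot a$, is $\mathcal{A}(\sigma_g)$-semilinear because $g^{-1} \cdot (av) = (g^{-1} \cdot a)(g^{-1} \cdot v)$); conversely, that $\varinjlim_{H \in \mathcal{H}} \mathcal{V}(G/H)$, endowed with its filtered-colimit $A$-module structure and the $G$-action defined through the morphisms $\sigma_g$, is a discrete $AG$-module; and that a morphism of sheaves of $\mathcal{A}$-modules is carried to an $AG$-linear map and conversely, the two round-trip natural isomorphisms being those already produced in Proposition \ref{G-rings and structure sheaves}, now automatically $\mathcal{A}$- respectively $A$-linear.

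I do not expect a serious obstacle: essentially all of the work is already done in Proposition \ref{G-rings and structure sheaves} and \cite[Theorem III.9.1]{MM}. The two points I would take care to spell out are (i) that the sheaf condition on $\mathcal{V}$ corresponds exactly to discreteness of the $AG$-module $V = \varinjlim_{H} \mathcal{V}(G/H)$, i.e. that every element has open stabilizer, and (ii) the compatibility between scalar multiplication and the $G$-action --- that is, that the skew multiplication $(ag)(bh) = a(g \cdot b)gh$ defining $AG$ is exactly what internal $A$-linearity in $G\text{-}\mathrm{Set}^{\dis}$ encodes. Both verifications are short and formal.
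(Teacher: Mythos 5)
Your proposal is correct, and its ``elementary'' fallback is precisely the paper's own proof: the paper establishes Theorem \ref{equivalence of categories} by rerunning the argument given before Proposition \ref{G-rings and structure sheaves} with ``ring'' replaced by ``module'', relying on \cite[Theorem III.9.1]{MM} for the underlying sheaf-of-sets equivalence and checking by hand that $\phi$ and $\psi$ carry the extra algebraic data (semilinearity of the transition maps $V^K \to V^H$, the $A$-module structure and discrete $G$-action on the colimit) back and forth. Your primary route --- transporting internal ring and module objects along the topos equivalence $T$ and identifying sheaves of modules over the ringed site with internal $\mathcal{A}$-modules, and internal $A$-modules in discrete $G$-sets with discrete $AG$-modules --- is a legitimate, more structural packaging of the same fact; it buys brevity, since the equivalence on module categories is automatic once one grants that an equivalence of categories preserves structures defined by finite products, at the cost of the two identifications you rightly flag as needing to be unwound (in particular that internal $A$-linearity is exactly the skew relation $g\cdot(av)=(g\cdot a)(g\cdot v)$ encoded in $AG$). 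The hands-on route has the practical advantage, exploited later in the paper, of producing the explicit descriptions $\mathcal{V}(G/H)=V^H$ and $V=\varinjlim_{H\in\mathcal{H}}\mathcal{V}(G/H)$ on which Section 3 (finite generation and Noetherianity) depends. One small point common to both your argument and the paper's: \cite[Theorem III.9.1]{MM} is stated for the site of all open subgroups, so strictly speaking one should note that the cofinal system $\mathcal{H}$ gives a dense subsite and invoke the comparison lemma; this is harmless, and the paper glosses over it in the same way.
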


In the rest of this paper we call objects in $\Sh(\mathcal{O}_G, J_{at}, \mathcal{A})$ by $\mathcal{A}$-modules.

\section{Finite generation and Noetherianity}

\subsection{Finite generation of rings}

Let $A$ and $\mathcal{A}$ be as specified in Theorem \ref{equivalence of categories}. We say that $\mathcal{A}$ is \textit{finitely generated} if there is a finite set
\[
\{a_1, \ldots, a_n\} \subseteq \bigsqcup_{H \in \mathcal{H}} \mathcal{A}(G/H)
\]
such that every subsheaf (of commutative rings) of $\mathcal{A}$ containing this set coincides with $\mathcal{A}$. We way that $A$ is \textit{finitely generated up to $G$-action} if there is a finite set $\{a_1, \ldots, a_n\} \subseteq A$ such that every subring containing
\[
\{ g \cdot a_i \mid g \in G, i \in [n] \}
\]
coincides with $A$; in other words, $A$ is generated by elements lying in finitely many $G$-orbits.

\begin{lemma} \label{finite generation of rings}
The following statements are equivalent:
\begin{enumerate}
\item $\mathcal{A}$ is finitely generated;

\item $A$ is finitely generated up to $G$-action.
\end{enumerate}
\end{lemma}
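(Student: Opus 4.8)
The plan is to transport the two notions of finite generation across the equivalence $\phi,\psi$ of Proposition~\ref{G-rings and structure sheaves} and thereby reduce the lemma to a transparent comparison of generating sets.

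First I would record a consequence of discreteness: since $G$ acts discretely on $A$, every $a\in A$ is fixed by some open subgroup $H\in\mathcal H$, so $A=\bigcup_{H\in\mathcal H}A^H$ and the colimit $A=\varinjlim_{H}\mathcal A(G/H)$ is computed by the inclusions $\mathcal A(G/H)=A^H\hookrightarrow A$. Consequently the natural map $\bigsqcup_{H\in\mathcal H}\mathcal A(G/H)\to A$ is surjective, it carries a finite subset $\{a_1,\dots,a_n\}$ of the source to a finite subset of $A$, and conversely every finite subset of $A$ lifts to the source (each $a_i$ lying in $\mathcal A(G/H_i)$ for a suitable $H_i\in\mathcal H$).

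Next I would identify subsheaves of $\mathcal A$ with $G$-stable subrings of $A$. Since $\phi$ and $\psi$ form an equivalence of categories, they preserve and reflect monomorphisms, hence induce an inclusion-preserving bijection between the poset of subobjects of $\mathcal A$, in the category of structure sheaves over $(\mathcal O_G,J_{at})$, and the poset of subobjects of $A=\psi(\mathcal A)$, in the category of commutative rings carrying a discrete $G$-action. A subobject in the latter category is, up to isomorphism, a $G$-stable subring $B\subseteq A$, discreteness of the $G$-action on $B$ being inherited from $A$. If the subsheaf $\mathcal B\subseteq\mathcal A$ corresponds to $B$, then $\mathcal B\cong\phi(B)$, so $\mathcal B(G/H)=B^H=B\cap A^H$ for all $H\in\mathcal H$; combined with the first step, this says that a subsheaf $\mathcal B$ contains an element $a_i\in\mathcal A(G/H_i)$ exactly when $B$ contains the image of $a_i$ in $A$. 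Therefore, for a finite set $\{a_1,\dots,a_n\}\subseteq\bigsqcup_H\mathcal A(G/H)$ with images $\bar a_1,\dots,\bar a_n\in A$, the smallest subsheaf of $\mathcal A$ containing $\{a_1,\dots,a_n\}$ corresponds to the smallest $G$-stable subring of $A$ containing $\{\bar a_1,\dots,\bar a_n\}$, namely the subring generated by $\{g\cdot\bar a_i\mid g\in G,\ i\in[n]\}$. Reading this equivalence in both directions, together with the fact that every finite subset of $A$ arises as such a set $\{\bar a_1,\dots,\bar a_n\}$, yields both implications $(1)\Leftrightarrow(2)$.

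The only nontrivial point is the start of the second step: checking that the equivalence of Proposition~\ref{G-rings and structure sheaves} restricts to an order-isomorphism on subobjects, that subobjects of $A$ in the category of rings with discrete $G$-action are exactly the $G$-stable subrings, and that, under the correspondence, the sections of a subsheaf are recovered as the fixed-point subrings $\mathcal B(G/H)=B\cap A^H$. Once this is in place, the rest is bookkeeping.
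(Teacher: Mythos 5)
Your proposal is correct and takes essentially the same route as the paper: both directions are obtained by transporting finite generation across the equivalence of Proposition \ref{G-rings and structure sheaves}, matching subsheaves of $\mathcal{A}$ with $G$-stable subrings of $A$ (in the paper, $(1)\Rightarrow(2)$ is an element chase through the colimit and $(2)\Rightarrow(1)$ applies $\psi$ to a subsheaf containing the $a_i$). Your only repackaging is to record the subobject correspondence once, with $\mathcal{B}(G/H)=B\cap A^H$, which has the merit of making explicit the sheaf-condition input that the paper uses implicitly, e.g.\ when asserting that a section in $\mathcal{A}(G/H)$ is a polynomial in the elements $\mathcal{A}(\sigma_{i,j})(a_i)$ and when concluding $\mathcal{B}=\mathcal{A}$ from $B=A$.
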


\begin{proof}
Suppose that $\mathcal{A}$ is finitely generated. By Proposition \ref{G-rings and structure sheaves}, one can assume that
\[
A = \varinjlim_{H \in \mathcal{H}} \mathcal{A}(G/H),
\]
whose elements are equivalence classes. Given an equivalence class $[a] \in A$, one can find a representative $a \in \mathcal{A}(G/H)$ for a certain $H \in \mathcal{H}$. Since $\mathcal{A}$ is finitely generated, we can find a finite set $\{a_1, \, \ldots, \, a_n\}$ with $a_i \in \mathcal{A}(G/H_i)$ and $H_i \in \mathcal{H}$ which generates $\mathcal{A}$. In particular, $a$ can be written as a polynomial of elements $\mathcal{A} (\sigma_{i,j}) (a_i) \in \mathcal{A}(G/H)$ for finitely many $\sigma_{i, j} \in \mathcal{O}_G(G/H, \, G/H_i)$. But for each $\sigma_{i, j}$ one can find an element $g_{i, j} \in G$ such that $g_{i, j} \cdot [a_i] = [\mathcal{A} (\sigma_{i, j}) (a_i)]$. Consequently, $[a]$ can be written as a polynomial of elements $g_{i, j} \cdot [a_i] \in A$. Thus $A$ is generated by elements lying in the orbits containing $[a_i]$, $1 \leqslant i \leqslant n$.

Conversely, suppose that $A$ is finitely generated up to $G$-action. Then one can find a finite set $\{a_1, \, \ldots, \, a_n\}$ such that $A$ is generated by elements lying in the $G$-orbits $G\cdot a_i$, $1 \leqslant i \leqslant n$. Again by Proposition \ref{G-rings and structure sheaves}, one can assume that $\mathcal{A}(G/H) = A^H$ for $H \in \mathcal{H}$. For each $a_i$, since $\Stab_G(a_i)$ is an open subgroup of $G$, one can find a certain $H_i \in \mathcal{H}$ such that $H_i \leqslant \Stab_G(a_i)$. Then $a_i \in \mathcal{A}(G/H_i)$. Let $\mathcal{B}$ be a subsheaf (of commutative rings) of $\mathcal{A}$ such that $a_i \in \mathcal{B}(G/H_i)$ for $1 \leqslant i \leqslant n$, and let $B = \psi(\mathcal{B})$ which can be regarded as a subring of $A$. Then $a_i \in \mathcal{B} (G/H) = B^H \subseteq B$. Consequently, $\{a_1, \ldots, a_n \} \subseteq B$, which forces $A = B$. Consequently, $\mathcal{B} = \mathcal{A}$, so $\mathcal{A}$ is finitely generated.
\end{proof}

\begin{remark}
The above lemma answers a question raised in \cite[Remark 2.16]{NR1}. That is, the additional condition that $\mathcal{A}$ is a structure sheaf over the site $(\mathcal{O}_G, J_{at})$ implies the converse statement of \cite[Proposition 2.15]{NR1}.
\end{remark}

In the rest of this subsection let $k$ be a commutative Noetherian ring, and $S$ a nonempty set on which $G$ acts discretely. Then $G$ acts on $k[S]$ discretely as well. Indeed, for a polynomial $P \in k[S]$,  we can find a finite subset $T \subseteq S$ such that each indeterminate appearing in $P$ is of the form $x_t$ for a certain $t \in T$. Consequently, one has
\[
\Stab_G(P) \supseteq \bigcap_{t \in T} \Stab_G(t).
\]
Since each $\Stab_G(t)$ is an open subgroup, so is the finite intersection. Thus $\Stab_G(P)$ is also an open subgroup of $G$, and hence $G$ acts on $k[S]$ discretely. In particular, if $S$ is a transitive $G$-set, then one can find an open subgroup $H \leqslant G$ such that the $G$-set $S$ is isomorphic to the $G$-set $G/H$, so $k[S] \cong k[G/H]$, which can be used to characterize finite generation property of $k$-algebras $A$ on which $G$ acts discretely. By convention, when saying that $G$ acts on a $k$-algebra discretely, we always assume that the action of $G$ on $k$ is trivial.

This following result generalizes \cite[Proposition 2.19]{NR1}.

\begin{proposition}
Let $A$ be a $k$-algebra on which $G$ acts discretely. Then $A$ is finitely generated up to $G$-action if and only if there exists a surjective $k$-algebra homomorphism
\[
k[G/H_1] \otimes_k k[G/H_2] \otimes_k \ldots \otimes_k k[G/H_n] \longrightarrow A
\]
with $H_i \in \mathcal{H}$.
\end{proposition}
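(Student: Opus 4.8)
The plan is to prove the two implications separately, in each case setting up a correspondence between a finite list of ``$G$-orbit generators'' of $A$ and the $n$-fold tensor product, which I will regard as a discrete $G$-algebra via the diagonal $G$-action; throughout, the surjection in the statement is understood to be $G$-equivariant for this diagonal action.

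For the ``only if'' direction I would start from a finite set $\{a_1,\dots,a_n\}\subseteq A$ witnessing that $A$ is finitely generated up to $G$-action. Exactly as in the proof of Lemma~\ref{finite generation of rings}, discreteness of the action produces, for each $i$, an $H_i\in\mathcal{H}$ with $H_i\leqslant\Stab_G(a_i)$. The assignment $gH_i\mapsto g\cdot a_i$ is well defined since $H_i\leqslant\Stab_G(a_i)$, so, as $k[G/H_i]$ is the free commutative $k$-algebra on the set $G/H_i$, it extends uniquely to a $k$-algebra homomorphism $\phi_i\colon k[G/H_i]\to A$; checking on the generating indeterminates shows $\phi_i$ is $G$-equivariant, where $G$ acts on $k[G/H_i]$ through $G/H_i$ (a discrete action, since $\Stab_G(x_{gH_i})=gH_ig^{-1}$ is open). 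Using commutativity of $A$, the rule $b_1\otimes\cdots\otimes b_n\mapsto\phi_1(b_1)\cdots\phi_n(b_n)$ assembles the $\phi_i$ into a $G$-equivariant $k$-algebra homomorphism
\[
\Phi\colon k[G/H_1]\otimes_k\cdots\otimes_k k[G/H_n]\longrightarrow A,
\]
whose image is the $k$-subring of $A$ generated by $\bigcup_{i=1}^{n}\{g\cdot a_i\mid g\in G\}$, which is $A$ by hypothesis; hence $\Phi$ is the required surjection.

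For the ``if'' direction I would argue conversely. Each $k[G/H_i]$ is generated up to its $G$-action by the single indeterminate $x_{H_i}$, since $g\cdot x_{H_i}=x_{gH_i}$ runs through all the indeterminates as $g$ runs through $G$; hence the tensor product is generated up to its diagonal $G$-action by the $n$ elements $y_i:=1\otimes\cdots\otimes x_{H_i}\otimes\cdots\otimes 1$. Setting $a_i:=\Phi(y_i)$ and using that $\Phi$ is an equivariant ring surjection, the $k$-subring of $A$ generated by $\{g\cdot a_i\mid g\in G,\ i\in[n]\}=\{\Phi(g\cdot y_i)\mid g\in G,\ i\in[n]\}$ is the $\Phi$-image of the $k$-subring of the source generated by the elements $g\cdot y_i$, which is the whole source; hence it is all of $A$, and $A$ is finitely generated up to $G$-action.

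I expect the real content here to be small: the two arguments are essentially bookkeeping, and the one point that needs care is that everything must be carried out $G$-equivariantly. Indeed the ``if'' direction genuinely fails for arbitrary non-equivariant $k$-algebra homomorphisms — for example, if some $H\in\mathcal{H}$ has infinite index, then a polynomial $k$-algebra in $|G/H|$ indeterminates carrying the trivial $G$-action is a quotient of $k[G/H]$ as plain $k$-algebras yet is not finitely generated up to the trivial action. Relatedly, I would also record that each $k[G/H_i]$, and therefore the tensor product equipped with the diagonal action, really lies in the category of $k$-algebras on which $G$ acts discretely; this, like the equivariance verifications above, reduces to the facts that conjugates of open subgroups are open and that finite intersections of open subgroups are open.
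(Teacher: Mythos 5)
Your proof is correct and follows essentially the same route as the paper: the ``only if'' direction builds the equivariant maps $k[G/H_i]\to A$ from generators $a_i$ with $H_i\leqslant\Stab_G(a_i)$, and the ``if'' direction uses that each $k[G/H_i]$ is generated by one element up to $G$-action and that equivariant quotients inherit finite generation up to $G$-action. Your explicit insistence that the surjection be $G$-equivariant (with the diagonal action on the tensor product) only makes precise a convention the paper leaves implicit, and your counterexample correctly shows this hypothesis is needed.
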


\begin{proof}
Each $k[G/H_i]$ is generated by one element up to $G$-action, namely the indeterminate corresponded to $H \in G/H$, so the left side is finitely generated up to $G$-action. It is easy to check that its quotient algebras are also finitely generated up to $G$-action.

Conversely, if $A$ is finitely generated up to $G$-action, then one can find elements $a_i \in A$, $1 \leqslant i \leqslant n$, such that $A$ is generated by elements in the $G$-orbits $G \cdot a_i$. For each $a_i$, let $H_i$ be a member in $\mathcal{H}$ which is also a subgroup of $\Stab_G(a_i)$. Then one can construct a $k$-algebra homomorphism $k[G/H_i] \to A$ by sending the indeterminate corresponded to $H_i \in G/H_i$ to $a_i$. This map is well defined by the definition of $H_i$. Since the image of the map specified in the proposition contains each $a_i$, it coincides with $A$. Thus the map is surjective.
\end{proof}

We use a concrete example to illustrate the above construction.

\begin{example} \label{structure sheaves over FI}
Let $S$ be an infinite set equipped with the discrete topology, and let $G$ be the topological group $\Sym(S)$ equipped with the compact-open topology. Given a finite set $T \subseteq S$, the subgroup
\[
H_T = \{ g \in G \mid g \cdot t = t, \, \forall t \in T \}
\]
is an open subgroup of $G$. Furthermore, the family
\[
\mathcal{H} = \{ H_T \leqslant G \mid T \subseteq S, \, |T| < \infty \}
\]
is a cofinal system of open subgroups of $G$; see \cite[III.20, Theorem 3]{Hu}. The orbit category $\mathcal{O}_G$ with respect to $\mathcal{H}$ is isomorphic to the opposite category of the category $\FI_S$.

Now fix a finite subset $T \subseteq S$. The left coset $G/H_T$ is isomorphic to the set $\Inj(T, S)$ consisting of injections from $T$ to $S$, where the action of $G$ on $\Inj(T, S)$ is induced by the action of $G$ on $S$. Indeed, both $G/H_T$ and $\Inj(T, S)$ are transitive $G$-sets. Furthermore, $H_T$ is the stabilizer subgroup of $H_T \in G/H_T$ and the natural inclusion from $T$ to $S$ in $\Inj(T, S)$, so the claim follows.

Let $A = k[G/H_T]$, a polynomial ring whose indeterminates, by the above observation, are parameterized by finite sequences $(s_1, \ldots, s_n)$ of elements in $S$ such that $n = |T|$ and $s_i \neq s_j$ when $i \neq j$. Let $\mathcal{A}$ be the structure sheaf corresponded to $A$. For a finite subset $L$ of $S$, one has $\mathcal{A}(G/H_L) = A^{H_L}$. Clearly $k \subseteq A^{H_L}$. We claim that a non-constant polynomial $P \in A$ is contained in $A^{H_L}$ if and only if the following is true: an indeterminate parameterized by the sequence $(s_1, \ldots, s_n)$ appears in $P$ if and only if every $s_i$ is contained in $L$. The if direction is clearly true. For the other direction, suppose that $P$ has an indeterminate corresponded to a finite sequence $(s_1, \ldots, s_n)$ such that there is a certain $s_i$ not contained in $L$. Then for every $s \in S \setminus L$, one can find a certain $g \in H_L$ such that $g \cdot s_i = s$, so we can obtain infinitely many sequences after applying elements in $H_L$ to $(s_1, \ldots, s_n)$. As a consequence, we deduce that $P$ shall contain infinitely many indeterminates since $P$ is fixed by all elements in $H_L$, which is absurd. Consequently, $\mathcal{A}(G/H_L)$ is the polynomial ring whose indeterminates are indexed by injections from $T$ to $L$ (of course, if $|L| < |T|$, then $\mathcal{A}(G/H_L) = k$).

The reader can check that $\mathcal{A}$ is the same as the structure sheaf $\mathbf{X}^{\FI, d}$ defined in \cite[Definition 2.17]{NR1}, where $d$ is the cardinality of $T$.
\end{example}

\subsection{Finite generation of modules}

Let $V$ be a discrete $AG$-module and $\mathcal{V}$ the corresponded $\mathcal{A}$-module. For $H \in \mathcal{H}$, one has $\mathcal{V}(G/H) = V^H$, the subset of $V$ consisting of elements fixed by all elements in $H$. As we did in the previous subsection, one can define finite generation property of $\mathcal{V}$ and finite generation property of $V$ (viewed as an $A$-module) up to $G$-action.

\begin{lemma} \label{equivalence of finitely generated modules}
The following statements are equivalent:
\begin{enumerate}
\item $V$ is a finitely generated $AG$-module;

\item $V$ viewed as an $A$-module is finitely generated up to $G$-action;

\item $\mathcal{V}$ is a finitely generated $\mathcal{A}$-module.
\end{enumerate}
\end{lemma}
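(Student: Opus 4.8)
The plan is to prove the three equivalences by showing $(1)\Leftrightarrow(2)$ directly from the definitions of the skew group ring $AG$ and its discrete modules, and then $(2)\Leftrightarrow(3)$ by mimicking the argument of Lemma \ref{finite generation of rings} with the ring $\mathcal{A}$ replaced by the $\mathcal{A}$-module $\mathcal{V}$ and with ``polynomial expressions'' replaced by ``$\mathcal{A}(G/H)$-linear combinations''. The key structural input throughout is that, by Theorem \ref{equivalence of categories} and Proposition \ref{G-rings and structure sheaves}, one may identify $V$ with the colimit $\varinjlim_{H \in \mathcal{H}} \mathcal{V}(G/H)$, whose elements are equivalence classes, and identify $\mathcal{V}(G/H) = V^H$; moreover for any $\sigma_g \in \mathcal{O}_G(G/H, G/K)$ the transition map $\mathcal{V}(\sigma_g)\colon \mathcal{V}(G/K) \to \mathcal{V}(G/H)$ is realized inside $V$ by $v \mapsto g^{-1}\cdot v$, and likewise on $\mathcal{A}$.

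For $(1)\Leftrightarrow(2)$: an $AG$-submodule of $V$ generated by elements $v_1,\dots,v_n$ consists of all $A$-linear combinations of the elements $g\cdot v_i$ with $g\in G$, $i\in[n]$, precisely because of the multiplication rule $(ag)(bh) = a(g\cdot b)gh$ in $AG$; hence a subset of $V$ generates $V$ as an $AG$-module if and only if the $A$-submodule it generates together with its $G$-translates is all of $V$. This makes $(1)$ and $(2)$ literally the same condition, so this direction is essentially a definition chase.

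For $(2)\Rightarrow(3)$: given $v_1,\dots,v_n \in V$ whose $G$-orbits $A$-generate $V$, use that each stabilizer $H_{v_i}$ is open to pick $H_i \in \mathcal{H}$ with $H_i \leqslant H_{v_i}$, so $v_i \in V^{H_i} = \mathcal{V}(G/H_i)$. If $\mathcal{W} \subseteq \mathcal{V}$ is a subsheaf of $\mathcal{A}$-modules containing each $v_i \in \mathcal{W}(G/H_i)$, then $W := \psi(\mathcal{W})$ is an $A$-submodule of $V$ stable under $G$ (since a subsheaf is closed under all transition maps, and these realize the $G$-action), and it contains each $v_i$; hence $W = V$, forcing $\mathcal{W} = \mathcal{V}$. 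For the converse $(3)\Rightarrow(2)$: if $\{v_1,\dots,v_n\}$ with $v_i \in \mathcal{V}(G/H_i)$ generates $\mathcal{V}$, then for an arbitrary class $[v] \in V$ represented by $v \in \mathcal{V}(G/H)$, finite generation of the sheaf expresses $v$ as an $\mathcal{A}(G/H)$-linear combination of elements $\mathcal{V}(\sigma_{i,j})(v_i)$ for finitely many $\sigma_{i,j} \in \mathcal{O}_G(G/H, G/H_i)$; choosing $g_{i,j} \in G$ with $\sigma_{i,j} = \sigma_{g_{i,j}}$, this exhibits $[v]$ as an $A$-linear combination of the elements $g_{i,j}^{-1}\cdot[v_i] \in G\cdot[v_i]$, with coefficients in $A$ coming from $\mathcal{A}(G/H) = A^H$. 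Hence the $G$-orbits of $[v_1],\dots,[v_n]$ generate $V$ as an $A$-module. I expect the main obstacle to be purely bookkeeping: namely checking carefully that a subsheaf of $\mathcal{A}$-modules, under the equivalence $\psi$, corresponds exactly to a $G$-stable $A$-submodule of $V$ (closure under the transition maps $\mathcal{V}(\sigma_g)$ must be matched with closure under $g\mapsto g\cdot v$ and the colimit comparison maps), and that the ``finitely many morphisms'' appearing in the sheaf-theoretic generation statement can be uniformly lifted to elements of $G$ via the surjection $N_G(H,H_i) \twoheadrightarrow \mathcal{O}_G(G/H, G/H_i)$ recalled in Section 2 — there is no genuine difficulty, only the need to keep the identifications consistent.
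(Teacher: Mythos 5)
Your proposal is correct and takes essentially the same route as the paper: the equivalence of (1) and (2) by expanding elements of $AG$ as finite sums $\sum_j a_j g_j$ so that $AG$-generation becomes $A$-generation by $G$-orbits, and the sheaf-side equivalence by repeating the argument of Lemma \ref{finite generation of rings} using $\mathcal{V}(G/H)=V^H$ and the correspondence between subsheaves of $\mathcal{A}$-modules and $G$-stable $A$-submodules coming from Theorem \ref{equivalence of categories}. The only cosmetic difference is that the paper transports the ring-lemma argument to relate (1) and (3) while you relate (2) and (3), which is immaterial once (1)$\Leftrightarrow$(2) is established.
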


\begin{proof}
The equivalence between (1) and (3) can be established as Lemma \ref{finite generation of rings}, based on Theorem \ref{equivalence of categories}, so we only need to show the equivalence between (1) and (2). Note that $V$ is a finitely generated $AG$-module if and only if there exists a finite set $\{ v_1, \ldots, v_n \}\subseteq V$ such that each $v \in V$ can be written as a finite linear combination $\lambda_1 v_1 + \ldots + \lambda_n v_n$ with $\lambda_i \in AG$. But each $\lambda_i$ can be expressed as a finite linear combination $a_{i1}g_1 + \ldots + a_{is_i} g_{s_i}$. Consequently, statement (1) holds if and only if each $v \in V$ can be written as a finite linear combination of elements in $G \cdot v_1 \cup \ldots \cup G \cdot v_n$ with coefficients in $A$, which is precisely statement (3).
\end{proof}

\begin{remark}
The extra condition that $\mathcal{V}$ is a sheaf of modules rather than a presheaf of modules implies the converse statement of \cite[Proposition 3.14]{NR1}; see \cite[Remark 3.15]{NR1}.
\end{remark}

Given $H \in \mathcal{H}$, we define an $AG$-module $A(G/H)$ with the left coset $G/H$ as a basis. This is a discrete $AG$-module. Indeed, an element $v \in A(G/H)$ can be written as $a_1 g_1H + \ldots + a_n g_n H$, so
\[
\Stab_G(v) \supseteq \Stab_G(a_1) \cap \ldots \cap \Stab_G(a_n) \cap \Stab_G(g_1H) \cap \ldots \cap \Stab_G(g_n H).
\]
Note that each $\Stab_G(a_i)$ is an open subgroup of $G$ and $\Stab_G(g_iH) = g_iHg_i^{-1}$ is also open, so the subgroup $\Stab_G(v)$ contains an open subgroup and hence is open as well.

These special $AG$-modules can be used to test finite generation property of discrete $AG$-modules. The following result generalizes \cite[Proposition 3.18]{NR1}.

\begin{proposition} \label{generators of modules}
The set $\{ A(G/H) \mid H \in \mathcal{H} \}$ is a set of generators for $AG \Mod^{\dis}$. Consequently, a discrete $AG$-module $V$ is finitely generated if and only if there is an epimorphism
\[
\bigoplus_{i=1}^n A(G/H_i) \to V
\]
with $H_i \in \mathcal{H}$.
\end{proposition}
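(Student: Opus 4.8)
The plan is to reduce everything to the single natural isomorphism
\[
\Hom_{AG}(A(G/H), V) \cong V^H,
\]
valid for any discrete $AG$-module $V$ and any $H \in \mathcal{H}$, which exhibits $A(G/H)$ as the ``free'' discrete $AG$-module on the $G$-set $G/H$. First I would establish this isomorphism by hand: a homomorphism $f \colon A(G/H) \to V$ of $AG$-modules is completely determined by the image $v = f(H)$ of the basis element $H \in G/H$, since $f(a \cdot gH) = a (g \cdot v)$; moreover $v$ must lie in $V^H$ because $H$ stabilizes the basis vector $H$, and conversely every $v \in V^H$ yields a well-defined $AG$-homomorphism (if $gH = g'H$ then $g' = gh$ with $h \in H$ and $g' \cdot v = g \cdot v$ since $v$ is $H$-fixed). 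Naturality in $V$ is immediate. Here the discreteness of $A(G/H)$, checked just before the statement, is what makes this a map in $AG\Mod^{\dis}$.

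Next I would prove that $\{A(G/H) \mid H \in \mathcal{H}\}$ is a set of generators. Since an element of a direct sum of discrete modules has only finitely many nonzero coordinates, arbitrary direct sums of discrete modules are discrete, so $AG\Mod^{\dis}$ is cocomplete and ``set of generators'' has its usual meaning. Now let $V$ be a discrete $AG$-module and $W \subsetneq V$ a proper subobject; pick $v \in V \setminus W$. Because $V$ is discrete, $\Stab_G(v)$ is an open subgroup, hence by cofinality of $\mathcal{H}$ among the open subgroups of $G$ there is some $H \in \mathcal{H}$ with $H \leqslant \Stab_G(v)$, i.e. $v \in V^H$. Under the isomorphism above, $v$ corresponds to a morphism $A(G/H) \to V$ carrying the generator to $v \notin W$, so this morphism does not factor through $W$. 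Thus the family separates subobjects, which is the definition of a generating set.

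Finally, for the ``consequently'' part: if $V$ is a finitely generated $AG$-module, choose a finite generating set $\{v_1, \ldots, v_n\}$, pick for each $i$ some $H_i \in \mathcal{H}$ with $v_i \in V^{H_i}$, and let $f_i \colon A(G/H_i) \to V$ be the corresponding morphism. The image of $\bigoplus_{i=1}^n f_i$ is an $AG$-submodule of $V$ containing every $v_i$, hence equals $V$, giving the desired epimorphism. Conversely, each $A(G/H_i)$ is generated as an $AG$-module by the single basis element $H_i$, since $g \cdot (H_i) = gH_i$ ranges over the whole basis $G/H_i$; therefore $\bigoplus_{i=1}^n A(G/H_i)$ is generated by $n$ elements, and any quotient of it is generated by at most $n$ elements, so $V$ is finitely generated.

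I do not expect a genuine obstacle here: the only points demanding care are the well-definedness and naturality of the isomorphism $\Hom_{AG}(A(G/H), V) \cong V^H$ and the use of cofinality of $\mathcal{H}$ to place some $H \in \mathcal{H}$ inside $\Stab_G(v)$; the rest is bookkeeping, and the argument parallels that of Lemma \ref{finite generation of rings} and Lemma \ref{equivalence of finitely generated modules}.
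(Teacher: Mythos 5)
Your proof is correct and takes essentially the same approach as the paper: the key step in both is that discreteness plus cofinality of $\mathcal{H}$ yields some $H \in \mathcal{H}$ with $H \leqslant \Stab_G(v)$, hence a morphism $A(G/H) \to V$ sending the canonical basis element to $v$. The paper assembles these maps into the surjection $\bigoplus_{v \in V} A(G/H_v) \to V$, whereas you check the equivalent subobject-separation criterion and spell out the isomorphism $\Hom_{AG}(A(G/H), V) \cong V^H$ and both directions of the ``consequently'' clause, details the paper leaves implicit.
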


\begin{proof}
Let $V$ be a discrete $AG$-module. For $v \in V$, take $H_v$ in $\mathcal{H}$ such that $H_v \leqslant \Stab_G(v)$. Since there is a homomorphism $A(G/H_v) \to V$ sending $H_v \in G/H_v$ to $v$, we can define a surjection
\[
\bigoplus_{v \in V} A(G/H_v) \longrightarrow V,
\]
establishing the first statement. The second statement follows immediately.
\end{proof}

We give a concrete example to illustrate the above construction.

\begin{example} \label{coincidence}
Let $S$ be an infinite set, $G = \Sym(S)$, and $A$ a commutative Noetherian ring on which $G$ acts discretely. Given a finite subset $T$ of $S$, it is easy to see that $V = A(G/H_T)$ is isomorphic to the free $A$-module with $\Inj(T, S)$ as a basis.

Let $\mathcal{V}$ be the corresponded $\mathcal{A}$-module. For a finite subset $L$ of $S$, $\mathcal{V}(G/H_L) = V^{H_L}$ clearly contains the free $A^{H_L}$-module with $\Inj(T, L)$ as a basis. We claim that $V^{H_L}$ is precisely this free $A^{H_L}$-module. Indeed, let $v = a_1 \sigma_1 + \ldots + a_n \sigma_n$ be an element in $V^{H_L}$ where $a_i \in A$ and $\sigma_i \in \Inj(T, S)$. Using the same argument as in Example \ref{structure sheaves over FI}, we deduce that the image of each $\sigma_i$ is contained in $L$, so it can be viewed as an element in $\Inj(T, L)$. Now for every $g \in H_L$, one has
\[
g \cdot v = (g \cdot a_1) (g \circ \sigma_1) + \ldots + (g \cdot a_n) (g \circ \alpha_n) = (g \cdot a_1) \sigma_1 + \ldots + (g \cdot a_n) \sigma_n = v,
\]
which forces $g \cdot a_i = a_i$. Thus $a_i \in A^{H_L}$.

When $A = k[S]$, the reader can check that $\mathcal{V}$ coincides with the free $\FI$-module $\mathbf{F}^{\FI, d}$ in \cite[Definition 3.16]{NR1}, where $d$ is the cardinality of $T$.
\end{example}

\subsection{Noetherianity}

We say that $A$ is \textit{Noetherian up to $G$-action} if every $G$-invariant ideal of $A$ is finitely generated up to $G$-action. An $\mathcal{A}$-module $\mathcal{V}$ is Noetherian if every $\mathcal{A}$-submodule of $\mathcal{V}$ is finitely generated.

The following proposition generalizes \cite[Theorem 4.6]{NR1} and implies its converse for $G = \Sym(S)$.

\begin{proposition} \label{equivalence of Noetherianity}
Let $V$ be a discrete $AG$-module. Then one has:
\begin{enumerate}
\item $V$ is Noetherian if and only if the corresponded $\mathcal{A}$-module $\mathcal{V}$ is Noetherian;

\item $A$ is a Noetherian up to $G$-action if and only if viewed as an $AG$-module it is Noetherian;

\item every finitely generated discrete $AG$-module is Noetherian if and only if $A(G/H)$ is a Noetherian $AG$-module for $H \in \mathcal{H}$.
\end{enumerate}
\end{proposition}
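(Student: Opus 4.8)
The plan is to treat the three parts in turn, leaning on the categorical equivalence of Theorem \ref{equivalence of categories} together with the finite-generation dictionary of Lemma \ref{equivalence of finitely generated modules}. For part (1), I would recall that Noetherianity is intrinsic to an object of an abelian category — an object is Noetherian exactly when its poset of subobjects satisfies the ascending chain condition — and that the equivalence of Theorem \ref{equivalence of categories} therefore carries the subobject poset of $V$ isomorphically onto that of the corresponding $\mathcal{A}$-module $\mathcal{V}$; this settles the equivalence at once. If one prefers the ``every submodule is finitely generated'' formulation used in the definitions, one instead observes that a discrete $AG$-submodule $W \leqslant V$ corresponds under $\phi$ to an $\mathcal{A}$-submodule of $\mathcal{V}$ and conversely, and applies Lemma \ref{equivalence of finitely generated modules} to each such $W$.

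For part (2), the key observation is that the $AG$-submodules of $A$ are precisely the $G$-invariant ideals of $A$: an $AG$-submodule is an $A$-submodule, hence an ideal, that is stable under the $G$-action, and conversely every $G$-invariant ideal is an $AG$-submodule. Applying Lemma \ref{equivalence of finitely generated modules} with $V$ replaced by an arbitrary $G$-invariant ideal $I$ shows that $I$ is finitely generated as an $AG$-module if and only if it is finitely generated as an $A$-module up to $G$-action. Hence ``every $G$-invariant ideal of $A$ is finitely generated up to $G$-action'' is literally the statement that $A$, regarded as an $AG$-module, is Noetherian.

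For part (3), the forward implication is immediate: by Proposition \ref{generators of modules} each $A(G/H)$ is a finitely generated discrete $AG$-module, so the hypothesis applies to it. For the converse, let $V$ be a finitely generated discrete $AG$-module; by Proposition \ref{generators of modules} fix an epimorphism $\bigoplus_{i=1}^n A(G/H_i) \to V$ with $H_i \in \mathcal{H}$. Now invoke the two standard permanence properties of Noetherian objects in an abelian category: a finite direct sum of Noetherian objects is Noetherian (via the short exact sequences $0 \to M' \to M' \oplus M'' \to M'' \to 0$ and the fact that an extension of two Noetherian objects is Noetherian), and a quotient of a Noetherian object is Noetherian (its subobject poset embeds order-preservingly into that of the original). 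Then $\bigoplus_{i=1}^n A(G/H_i)$ is Noetherian, and hence so is its quotient $V$.

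I do not anticipate a genuine obstacle; the only points needing care are that $AG \Mod^{\dis}$ is closed under the operations used — finite direct sums of, and subobjects and quotients of, discrete modules are again discrete, which the text has recorded — and that the equivalence of Theorem \ref{equivalence of categories} matches the two notions of ``finitely generated submodule'', which is guaranteed by Lemma \ref{equivalence of finitely generated modules} together with the fact that $AG \Mod^{\dis}$ is a Grothendieck category with generating set $\{A(G/H) \mid H \in \mathcal{H}\}$. If any step deserves the label ``the hard part'', it is merely the bookkeeping in part (3) of assembling these standard abelian-category facts.
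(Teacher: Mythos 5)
Your argument is correct and follows the same route as the paper's own (very terse) proof: part (1) via the equivalence of Theorem \ref{equivalence of categories} together with Lemma \ref{equivalence of finitely generated modules}, part (2) via the identification of discrete $AG$-submodules of $A$ with $G$-invariant ideals, and part (3) via Proposition \ref{generators of modules}. You merely spell out the standard permanence properties of Noetherian objects (finite direct sums and quotients) that the paper leaves implicit, which is exactly the intended bookkeeping.
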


\begin{proof}
The first statement follows from Theorem \ref{equivalence of categories} and Lemma \ref{equivalence of finitely generated modules}. The second statement is also clear since $G$-invariant ideals of $A$ are precisely discrete $AG$-submodules of $A$. The third statement follows from Proposition \ref{generators of modules}.
\end{proof}

\section{Actions of highly transitive groups on polynomial rings}

Throughout this section let $S$ be an infinite set, $k$ a commutative Noetherian ring, $A$ a commutative ring, and $G$ a group acting on $S$ via a group homomorphism $\rho: G \to \Sym(S)$. Fix $\mathcal{H}$ to be the cofinal system consisting of pointwise stabilizer subgroups
\[
H_T = \{ g \in G \mid g \cdot t = t, \, \forall t \in T \}
\]
with $T$ is a finite subset of $S$. With respect to this cofinal system $G$ becomes a topological group by \cite[III.20, Theorem 3]{Hu}, and it acts on $S$ discretely. Let $\mathcal{O}_G$ be the orbit category of $G$ with respect to the cofinal system $\mathcal{H}$.

\begin{definition}
Let $L$ and $T$ be two finite subsets of $S$. An injective map $\sigma: L \to T$ is called extendable if there is an element $\tilde{\sigma} \in G$ such that $\sigma(l) = \tilde{\sigma}(l)$ for every $l \in L$.
\end{definition}

It is clear that the composite of two extendable injections is also an extendable injection.

\begin{definition} \cite[Definition 3.11]{BMMN}
We say that the action of $G$ on $S$ is \textit{$n$-transitive} if for a fixed $n \in \mathbb{N}$ and any two entrywise distinct sequences
\[
(s_1, \, s_2, \, \ldots, \, s_n), \quad (t_1, \, t_2, \, \ldots, \, t_n),
\]
there exists $\sigma \in G$ such that $\sigma(s_i) = t_i$ for $i \in [n]$. The action is said to be \textit{highly transitive} if it is $n$-transitive for every $n \in \mathbb{N}$.
\end{definition}

Since the set of entrywise distinct sequences of length $n$ can be identified with the set $\Inj([n], S)$, the action of $G$ on $S$ is highly transitive if and only if $G$ acts transitively on $\Inj([n], S)$ for $n \in \mathbb{N}$.

\begin{lemma} \label{extendable maps}
The action of $G$ on $S$ is highly transitive if and only if injections between two finite subsets of $S$ are extendable. In this case, the map sending finite subsets $T$ of $S$ to $G/H_T$ is injective.
\end{lemma}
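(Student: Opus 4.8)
The plan is to treat this as two implications together with a short consequence, all of which amount to unwinding the definition of extendability and the description of $\mathcal{O}_G$.

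For the forward implication I would assume the action is highly transitive and take an injection $\sigma\colon L\to T$ between finite subsets of $S$. Writing $L=\{s_1,\dots,s_n\}$ with pairwise distinct entries, the tuples $(s_1,\dots,s_n)$ and $(\sigma(s_1),\dots,\sigma(s_n))$ are both entrywise distinct, so $n$-transitivity produces $\tilde\sigma\in G$ agreeing with $\sigma$ on each $s_i$, which is exactly an extension of $\sigma$. For the converse I would assume every injection between finite subsets of $S$ is extendable; given entrywise distinct tuples $(s_1,\dots,s_n)$ and $(t_1,\dots,t_n)$, the rule $s_i\mapsto t_i$ defines a (well-defined, injective) map $\{s_1,\dots,s_n\}\to\{t_1,\dots,t_n\}$ of finite subsets of $S$, and any extension of it to $G$ witnesses $n$-transitivity; since $n$ is arbitrary the action is highly transitive.

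For the final sentence I would first recall that objects of $\mathcal{O}_G$ are the cosets $G/H$, so $G/H_T=G/H_{T'}$ is equivalent to $H_T=H_{T'}$, and it therefore suffices to show $T\mapsto H_T$ is injective. Suppose $T\ne T'$, say $t\in T\setminus T'$. Since $S$ is infinite I can choose $s\in S\setminus(T\cup T')$; then the map on $T'\cup\{t\}$ that fixes $T'$ pointwise and sends $t\mapsto s$ is an injection into $T'\cup\{s\}$, and by the extendability just established it extends to some $g\in G$. This $g$ fixes $T'$ pointwise but moves $t\in T$, so $g\in H_{T'}\setminus H_T$, whence $H_T\ne H_{T'}$.

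I do not anticipate a genuine obstacle here; the only points needing a moment's care are verifying injectivity of the auxiliary map used in the last step — which is precisely why $s$ is chosen outside $T\cup T'$, guaranteeing $s\ne t$ and $s,t\notin T'$ — and noting that, in the orbit category, separating the objects $G/H_T$ is the same as separating the subgroups $H_T$ themselves.
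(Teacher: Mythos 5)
Your proposal is correct and takes essentially the same approach as the paper: both directions of the equivalence are the standard unwinding of $n$-transitivity via entrywise distinct tuples, and injectivity of $T \mapsto G/H_T$ is in both cases witnessed by a group element fixing one finite set pointwise while moving a point of the other. The only differences are cosmetic: you extend an injection $\sigma\colon L \to T$ by applying $|L|$-transitivity directly to $(l_1,\dots,l_m)$ and $(\sigma(l_1),\dots,\sigma(l_m))$ rather than padding the tuple to exhaust $T$, and you prove injectivity directly from extendability instead of the paper's contrapositive reduction to a proper inclusion $T' \subsetneqq T$.
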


\begin{proof}
The if direction is clear since given two entrywise distinct sequences $(s_1, \, s_2, \, \ldots, \, s_n)$ and $(t_1, \, t_2, \, \ldots, \, t_n)$, the injection sending $s_i$ to $t_i$ can be extended to a certain $\sigma \in G$. For the other direction, note that an injection $\sigma: L \to T$ can be identified with an entrywise distinct sequence
\[
(\sigma(l_1), \, \sigma(l_2), \, \ldots, \, \sigma(l_m), \, t_{m+1}, \, \ldots, \, t_n)
\]
whose entries exhaust all elements in $T$. Then any element $\tilde{\sigma} \in G$ sending $(l_1, \, \ldots, \, l_m, \, t_{m+1}, \, \ldots, \, t_n)$ to the above sequence extends $\sigma$.

Now we prove the second statement. If the map is not injective, then there are finite subsets $K \neq L$ such that $H_K = H_L$, so one has $K \neq K \cup L$ but $H_K = H_{K \cup L}$ or $L \neq K \cup L$ but $H_L = H_{K \cup L}$. Consequently, one gets a finite set $T$ and a proper subset $T' \subsetneqq T$ such that $H_T = H_{T'}$. Write $T$ as a sequence
\[
\mathbf{t} = (t_1, \, \ldots, \, t_m, \, t_{m+1}, \, \ldots, \, t_{m+n})
\]
such that the first $m$ elements form $T'$, and define another sequence $\mathbf{t}'$ via replacing $t_{m+n}$ by another element not in $T$. Then there is no $\sigma \in G$ sending the sequence $\mathbf{t}$ to $\mathbf{t}'$ since if $\sigma$ fixes the first $m$ terms, it must fix the whole sequence. Therefore, the action of $G$ on $S$ is not highly transitive.
\end{proof}

\begin{proposition} \label{characterization for FI}
The following statements are equivalent:
\begin{enumerate}
\item there is an isomorphism $\varrho: \FI^{\op}_S \to \mathcal{O}_G$ such that $\varrho(T) = G/H_T$ for every finite $T \subseteq S$;

\item the action of $G$ on $S$ is highly transitive;

\item the image of $\rho: G \to \Sym(S)$ is a dense subgroup of $\Sym(S)$.
\end{enumerate}
\end{proposition}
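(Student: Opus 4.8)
The plan is to establish the cycle $(2)\Rightarrow(1)\Rightarrow(2)$ together with $(2)\Leftrightarrow(3)$, with $(1)\Rightarrow(2)$ being the delicate step. The equivalence $(2)\Leftrightarrow(3)$ is quick: in the permutation topology on $\Sym(S)$, density of the image $\rho(G)$ means precisely that for every $\sigma\in\Sym(S)$ and every finite $F\subseteq S$ there is $g\in G$ with $\rho(g)|_F=\sigma|_F$. Since $S$ is infinite, every injection from a finite subset of $S$ into $S$ extends to some $\sigma\in\Sym(S)$, so this condition is exactly the statement that every injection between finite subsets of $S$ is extendable, which by Lemma~\ref{extendable maps} is equivalent to high transitivity.

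For $(2)\Rightarrow(1)$ I would build $\varrho$ by hand. On objects, $\varrho(T)=G/H_T$ is a bijection onto $\Ob(\mathcal{O}_G)$ because $\mathcal{H}=\{H_T\mid T\subseteq S\text{ finite}\}$, and it is injective by the second assertion of Lemma~\ref{extendable maps}. A morphism $L\to T$ in $\FI_S^{\op}$ is an injection $\iota\colon T\hookrightarrow L$; applying Lemma~\ref{extendable maps} to $\iota^{-1}\colon\iota(T)\to T$ produces $g\in G$ with $g|_{\iota(T)}=\iota^{-1}$, so $T=g(\iota(T))\subseteq g(L)$, hence $H_{g(L)}\subseteq H_T$ and $\sigma_g\colon G/H_L\to G/H_T$ is a morphism of $\mathcal{O}_G$; set $\varrho(\iota)=\sigma_g$. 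The remaining verifications are routine: independence of the chosen extension $g$ (two extensions agree with $\iota^{-1}$ on $\iota(T)$, hence determine the same right coset $H_Tg$), functoriality (the identity is extended by $e$, and composites of injections correspond to products of the chosen extensions), and bijectivity on each Hom-set — injectivity because $\sigma_g=\sigma_{g'}$ forces $g^{-1}|_T=g'^{-1}|_T$, and surjectivity because any $\sigma_g\colon G/H_L\to G/H_T$ has $H_{g(L)}\subseteq H_T$, which under high transitivity forces $T\subseteq g(L)$ (otherwise one could fix $g(L)$ pointwise while moving a point of $T$), so $g^{-1}|_T\in\FI_S^{\op}(L,T)$ is a preimage of $\sigma_g$.

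The hard part is $(1)\Rightarrow(2)$: here $\varrho$ is only assumed to exist, with no a priori control over its effect on morphisms, so one must recover concrete transitivity from the bare categorical structure of $\mathcal{O}_G$. I would extract three facts from "$\varrho$ is an isomorphism with $\varrho(T)=G/H_T$": (a) $T\mapsto H_T$ is injective on finite subsets of $S$; (b) $G/H_L\cong G/H_T$ in $\mathcal{O}_G$ iff $|L|=|T|$, since isomorphism classes of objects are preserved and in $\FI_S^{\op}$ two objects are isomorphic exactly when equinumerous; (c) $\mathrm{End}_{\mathcal{O}_G}(G/H_T)\cong\Sym(T)$, a group of order $|T|!$. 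Facts (a) and (b) give $G/H_L\cong G/H_T\iff\exists g\,(gH_Lg^{-1}=H_T)\iff\exists g\,(H_{g(L)}=H_T)\iff\exists g\,(g(L)=T)$, so any two finite subsets of $S$ of equal size lie in one $G$-orbit setwise. Fact (c) makes the monoid $H_T\backslash N_G(H_T,H_T)=\mathrm{End}_{\mathcal{O}_G}(G/H_T)$ a group; using the composition law $\sigma_g\circ\sigma_h=\sigma_{gh}$, for any $g\in N_G(H_T,H_T)$ pick $h$ with $[\sigma_h]=[\sigma_g]^{-1}$, so $gh\in H_T$ and hence $H_T=(gh)H_T(gh)^{-1}=g\,(hH_Th^{-1})\,g^{-1}\subseteq gH_Tg^{-1}\subseteq H_T$, forcing $gH_Tg^{-1}=H_T$. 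Thus $N_G(H_T,H_T)$ coincides with the setwise stabilizer $W_T=\{g\in G\mid g(T)=T\}$ (using (a) once more for the nontrivial inclusion), so $W_T/H_T\cong\mathrm{End}_{\mathcal{O}_G}(G/H_T)$ has order $|T|!$; since the restriction map $W_T\to\Sym(T)$ has kernel exactly $H_T$, it embeds $W_T/H_T$ into $\Sym(T)$, and the order comparison shows it is onto. Hence every permutation of every finite subset of $S$ is induced by an element of $G$. Finally, given entrywise distinct $(s_1,\dots,s_n)$ and $(t_1,\dots,t_n)$, put $L=\{s_1,\dots,s_n\}$, $T=\{t_1,\dots,t_n\}$, choose $g_0\in G$ with $g_0(L)=T$ by setwise transitivity and $g_1\in W_T$ restricting on $T$ to the permutation $g_0(s_i)\mapsto t_i$; then $g_1g_0$ sends $s_i$ to $t_i$ for all $i$, so the action is highly transitive. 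The two places needing care are the repeated use of (a) to convert equalities of pointwise stabilizers into equalities of subsets, and the group-theoretic upgrade of $N_G(H_T,H_T)$ to the setwise stabilizer $W_T$, which is what licenses the order comparison against $|\Sym(T)|=|T|!$.
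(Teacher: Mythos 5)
Your proposal is correct, and two of the three implications coincide with the paper's proof: for $(2)\Rightarrow(1)$ you build the same functor $\varrho$ by extending (inverses of) injections via Lemma~\ref{extendable maps} and checking well-definedness, faithfulness and fullness exactly as the paper does, and for $(2)\Leftrightarrow(3)$ you use the same description of the basic open sets $\sigma U_T$ of $\Sym(S)$. The genuine difference is in $(1)\Rightarrow(2)$. The paper works directly with two objects: injectivity of $T\mapsto H_T$ shows that any $g$ with $gH_Tg^{-1}=H_{T'}$ restricts to a bijection $T\to T'$, distinct morphisms $G/H_T\to G/H_{T'}$ have distinct restrictions, and comparing $|\Hom_{\mathcal{O}_G}(G/H_T,G/H_{T'})|$ with the $|T|!$ bijections shows every bijection $T\to T'$ is realized, giving high transitivity in one stroke. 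You instead decompose the step: isomorphism classes of objects yield setwise transitivity on equal-size finite subsets, while the endomorphism monoid of $G/H_T$, being a group of order $|T|!$, forces $N_G(H_T,H_T)$ to coincide with the setwise stabilizer $W_T$, so that $W_T/H_T$ embeds in $\Sym(T)$ and is onto by an order count; combining the two pieces gives high transitivity. Both arguments rest on the same pillars (injectivity of $T\mapsto H_T$ extracted from the bijection on objects, and the cardinality $|T|!$ of the relevant morphism set), but the paper's count is shorter, whereas your route isolates reusable structural facts, notably the neat observation that invertibility of the endomorphism monoid upgrades the inclusion $gH_Tg^{-1}\subseteq H_T$ to equality. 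One presentational point: your claim that $G/H_L\cong G/H_T$ in $\mathcal{O}_G$ yields some $g$ with $gH_Lg^{-1}=H_T$ (equality, not just inclusion) deserves a line of justification — either by comparing point stabilizers under a $G$-equivariant bijection, or by the same coset manipulation you already carry out for endomorphisms; the paper invokes the analogous fact without proof as well, so this is a polish item rather than a gap.
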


\begin{proof}
$(1) \Rightarrow (2)$. Given two entrywise distinct sequences $\mathbf{t} = (t_1, \, \ldots, \, t_n)$ and $\mathbf{t}' = (t_1', \, \ldots, \, t_n')$, and let $T$ and $T'$ be the corresponded finite subsets of $S$. Since $T$ and $T'$ are isomorphic in $\FI^{\op}_S$, so must be $G/H_T$ and $G/H_{T'}$ in $\mathcal{O}_G$. In particular, there is $g \in G$ such that $gH_Tg^{-1} = H_{T'}$, so $g(t) = \sigma g(t)$ for all $t \in T$ and $\sigma \in H_{T'}$. It follows that $g(t) \in T'$ since otherwise every element in $H_{T'}$ fixes $T'' = T' \sqcup \{g(t) \}$ and hence $H_{T'} = H_{T''}$, contradicting the assumption that the map $T \mapsto G/H_T$ is an bijection.

We have deduced that every $g \in G$ representing an isomorphism from $G/H_T$ to $G/H_{T'}$ gives a bijection from $T$ to $T'$ via restricting $g$ to $T'$. Furthermore, by the definition of orbit categories we know that $g$ and $g'$ in $G$ representing the same isomorphism from $G/H_T$ to $G/H_{T'}$ if and only if $H_{T'} g = H_{T'} g'$, and if and only if their restrictions coincide as maps from $T$ to $T'$. Since $\varrho$ is an isomorphism, the number of morphisms from $G/H_T$ to $G/H_{T'}$ equals the number of bijections from $T'$ to $T$. It follows from these observations that there is a certain $g \in G$ representing a morphism from $G/H_T$ to $G/H_{T'}$ whose restriction to $T'$ sends $\mathbf{t}'$ to $\mathbf{t}$; that is, the action of $G$ on $S$ is highly transitive.

$(2) \Rightarrow (1)$. The proof is similar to that of \cite[Theorem III.9.2]{MM}. For the convenience of the reader, we describe an explicit construction of $\varrho: \FI_S^{\op} \to \mathcal{O}_G$ which is fully faithful and is a bijection between the object sets. Since the map sending a finite subset $T \subseteq S$ to $G/H_T$ is the desired bijection by Lemma \ref{extendable maps}, we only need to define $\varrho$ for morphisms.

Given a morphism $\varsigma: T \to L$ in $\FI_S^{\op}$, we obtain a unique injection $\sigma: L \to T$. By Lemma \ref{extendable maps}, $\sigma$ extends to an element $\tilde{\sigma} \in G$. Since $\tilde{\sigma}^{-1}H_T \tilde{\sigma}$ is contained in $H_L$, $\tilde{\sigma}^{-1}$ represents a morphism from $G/H_T$ to $G/H_L$ in $\mathcal{O}_G$ sending $gH_T$ to $g \tilde{\sigma} H_L$, so we can define $\varrho(\varsigma)$ to be this morphism.

We check that the above construction is well defined. Indeed, if $\tilde{\sigma}$ and $\tilde{\tau}$ are extensions of $\sigma: L \to T$ and $\tau: L \to T$ respectively, then for any $g \in G$,
\begin{align*}
& g \tilde{\sigma} H_L = g \tilde{\tau} H_L \quad \Longleftrightarrow \quad \tilde{\sigma} H_L = \tilde{\tau} H_L \quad \Longleftrightarrow \quad \tilde{\tau}^{-1} \tilde{\sigma} H_L = H_L\\
& \Longleftrightarrow \quad \tilde{\tau}^{-1} \tilde{\sigma} \in H_L \quad \Longleftrightarrow \quad \tilde{\tau}^{-1} \tilde{\sigma}(x) = x, \, \forall x \in L\\
& \Longleftrightarrow \tilde{\tau}^{-1} \sigma (x) = x, \, \forall x \in L \quad \Longleftrightarrow \quad \sigma(x) = \tilde{\tau} (x) = \tau(x), \, \forall x \in L.
\end{align*}
Thus this construction is independent of the choice of extensions, and hence is well defined. It also shows that the construction is faithful. It is routine to check that $\varrho$ is indeed a functor, and it is full since every morphism in $\FI_S$ extends to an element in $G$ and every morphism in $\mathcal{O}_G$ is induced by an element in $G$.

$(2) \Leftrightarrow (3)$. This is well known to experts; see for instance \cite[Section 4.2]{Ca}. For the convenience of the reader we give a detailed proof. Without loss of generality we can assume that $\rho$ is an injective homomorphism, and in this case the image of $\rho$ is precisely $G$. Note that the collection $\{ \sigma U_T \}$, where $\sigma \in \Sym(S)$ and $U_T$ is the stabilizer subgroup of $\Sym(S)$ fixing every element in a finite subset $T \subseteq S$, is a base of the topology on $\Sym(S)$. Therefore, $G$ is dense in $\Sym(S)$ if and only if $G \cap \sigma U_T$ is nonempty for every finite subset $T \subseteq S$ and every $\sigma \in \Sym(S)$; that is, there is a certain $g \in G$ such that $g \in \sigma U_T$. But this is equivalent to saying that for every $\sigma \in \Sym(S)$ and every finite subset $T \subseteq S$, there is a certain $g \in G$ such that $g$ and $\sigma$ induce the same map from $T$ to $\sigma(T)$. But the action of $\Sym(S)$ on $S$ is highly transitive, so this is true if and only if the action of $G$ on $S$ is highly transitive.
\end{proof}

In the rest of this section suppose that the action of $G$ on $S$ is highly transitive, so one may identify $\mathcal{O}_G$ with $\FI_S^{\op}$. We immediately have the following result.

\begin{theorem} \label{main result 1'}
Let $A$ be a commutative ring on which $\tilde{G} = \Sym(S)$ and $G$ act discretely. If the action of $G$ on $S$ is highly transitive, then
\[
A \tilde{G} \Mod^{\dis} \simeq AG \Mod^{\dis}.
\]
\end{theorem}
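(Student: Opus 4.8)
# Proof Proposal for Theorem \ref{main result 1'}

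The plan is to leverage the sheaf-theoretic equivalence of Theorem \ref{equivalence of categories} to reduce the comparison of the two discrete module categories to a comparison of sheaf categories over a common site. First I would apply Theorem \ref{equivalence of categories} twice: once to $G$, obtaining $AG \Mod^{\dis} \simeq \Sh(\mathcal{O}_G, J_{at}, \mathcal{A})$, and once to $\tilde{G} = \Sym(S)$, obtaining $A\tilde{G} \Mod^{\dis} \simeq \Sh(\mathcal{O}_{\tilde{G}}, J_{at}, \tilde{\mathcal{A}})$, where $\mathcal{A}$ and $\tilde{\mathcal{A}}$ are the structure sheaves of commutative rings attached to $A$ under the respective group actions (using in each case the cofinal system of pointwise stabilizers of finite subsets of $S$). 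The goal then becomes to identify the two ringed sites.

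The key step is that both orbit categories are canonically identified with $\FI_S^{\op}$. Since the action of $\Sym(S)$ on $S$ is obviously highly transitive, Proposition \ref{characterization for FI} gives an isomorphism $\tilde{\varrho}\colon \FI_S^{\op} \to \mathcal{O}_{\tilde{G}}$ sending $T$ to $\tilde{G}/H_T$; and by hypothesis the action of $G$ on $S$ is highly transitive, so the same proposition gives an isomorphism $\varrho\colon \FI_S^{\op} \to \mathcal{O}_G$ sending $T$ to $G/H_T$. Composing, $\varrho \circ \tilde{\varrho}^{-1}$ is an isomorphism $\mathcal{O}_{\tilde{G}} \to \mathcal{O}_G$ of categories carrying $\tilde{G}/H_T$ to $G/H_T$, and since the atomic topology $J_{at}$ depends only on the underlying category, this isomorphism transports $(\mathcal{O}_{\tilde{G}}, J_{at})$ to $(\mathcal{O}_G, J_{at})$ as Grothendieck sites.

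Next I would check that this isomorphism of sites is compatible with the two structure sheaves, i.e.\ that $\tilde{\mathcal{A}}$ pulls back to $\mathcal{A}$ along $\varrho \circ \tilde{\varrho}^{-1}$. This is immediate from the explicit formula $\mathcal{A}(G/H) = A^H$ recorded before Proposition \ref{G-rings and structure sheaves}: both $\mathcal{A}(G/H_T)$ and $\tilde{\mathcal{A}}(\tilde{G}/H_T)$ are the subring $A^{H_T}$ of $A$ fixed by the pointwise stabilizer of $T$, and the value $H_T$ of this stabilizer is the same whether computed inside $G$ or inside $\tilde{G}$ — because, by Lemma \ref{extendable maps}, every injection between finite subsets of $S$ is realized by an element of $G$, exactly as it is by an element of $\tilde{G}$, so the two actions induce literally the same notion of ``fixing $T$ pointwise'' on the finitely many indeterminates involved; the action of $\mathcal{A}$ on morphisms is likewise given by the same formula $a \mapsto g^{-1}\cdot a$ in both cases. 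Hence the ringed sites $(\mathcal{O}_G, J_{at}, \mathcal{A})$ and $(\mathcal{O}_{\tilde{G}}, J_{at}, \tilde{\mathcal{A}})$ are isomorphic, their sheaf categories are equivalent, and chaining the equivalences yields $A\tilde{G}\Mod^{\dis} \simeq \Sh \simeq AG\Mod^{\dis}$.

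I expect the main obstacle to be purely bookkeeping rather than conceptual: one must be careful that the cofinal system $\mathcal{H}$ chosen for $G$ (pointwise stabilizers $H_T$) really does index the same objects as the system chosen for $\tilde{G}$, which requires the injectivity statement in Lemma \ref{extendable maps} (distinct finite sets give distinct cosets) so that the object assignment $T \mapsto G/H_T$ is genuinely a bijection and $\varrho$ is an isomorphism of categories, not merely an equivalence. Once that is in hand, the identification of the structure sheaves is a direct unwinding of definitions, and no estimates or delicate constructions are needed.
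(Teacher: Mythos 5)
Your overall route is exactly the paper's: its proof consists of applying Theorem \ref{equivalence of categories} to both $G$ and $\tilde{G}$ and identifying both ringed sites with one over $\FI_S^{\op}$ via Proposition \ref{characterization for FI}, which is what you spell out. However, one step of your write-up is justified incorrectly: the identification of the two structure sheaves. The pointwise stabilizer of $T$ computed in $G$ is \emph{not} ``the same'' as the one computed in $\tilde{G}$; even when $\rho$ is injective, $H_T^{G} = G \cap U_T$ (with $U_T$ the stabilizer of $T$ in $\Sym(S)$) is in general a proper subgroup of $U_T$, so a priori one only has the inclusion $A^{U_T} \subseteq A^{H_T^{G}}$, and the equality of the two fixed subrings is exactly what must be proved. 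Moreover, your phrase about ``the finitely many indeterminates involved'' tacitly assumes $A = k[S]$, whereas the theorem concerns an arbitrary commutative ring $A$ with discrete actions of both groups; Lemma \ref{extendable maps} by itself says nothing about fixed points in such an $A$.

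The missing argument is short but genuinely needed: by Proposition \ref{characterization for FI}, high transitivity of the $G$-action means $\rho(G)$ is dense in $\Sym(S)$, hence $\rho(H_T^{G}) = \rho(G) \cap U_T$ is dense in the open subgroup $U_T$; since the $\tilde{G}$-action on $A$ is discrete, the stabilizer in $\tilde{G}$ of any $a \in A$ is open, hence closed, so if it contains $\rho(H_T^{G})$ it contains its closure and therefore all of $U_T$. This yields $A^{H_T^{G}} \subseteq A^{U_T}$, and the reverse inclusion is trivial, so $\mathcal{A}(G/H_T^{G}) = \tilde{\mathcal{A}}(\tilde{G}/U_T)$ as subrings of $A$. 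A similar one-line check is needed on morphisms: an injection $L \to T$ may be extended to an element of $G$ and, separately, to an element of $\tilde{G}$, and these two extensions differ by an element of $U_L$, which acts trivially on $A^{U_L}$, so the induced maps agree. With these two points supplied, your argument is complete and coincides with the paper's intended proof; the remaining ingredients you use (the atomic topology depends only on the underlying category, and $T \mapsto G/H_T$ is a bijection by Lemma \ref{extendable maps}) are handled correctly.
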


\begin{proof}
The conclusion follows from Theorem \ref{equivalence of categories} and Proposition \ref{characterization for FI}.
\end{proof}

The following theorem has been established for $G = \Sym(S)$; see for instances \cite{AH, Co, HS}.

\begin{theorem} \label{Noetherian wrt permutation groups}
If the action of $G$ on $S$ is highly transitive, then every finitely generated discrete $k[S]G$-module is Noetherian. In particular, $k[S]$ is a Noetherian $k[S]G$-module.
\end{theorem}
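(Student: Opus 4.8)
The plan is to transport the statement, through the dictionary built in Sections 2 and 3, into a Noetherianity theorem of Nagel and R\"omer over the category $\FI$. The first step is to invoke Theorem \ref{equivalence of categories}: the category $k[S]G\Mod^{\dis}$ is equivalent to $\Sh(\mathcal{O}_G, J_{at}, \mathcal{A})$, where $\mathcal{A}$ is the structure sheaf attached to the $G$-ring $k[S]$. Since the action of $G$ on $S$ is highly transitive, Proposition \ref{characterization for FI} supplies an isomorphism $\FI_S^{\op} \cong \mathcal{O}_G$ sending a finite subset $T \subseteq S$ to $G/H_T$; as $S$ is infinite, $\FI_S$ is moreover equivalent to the skeletal category $\FI$. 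So it suffices to understand $\mathcal{A}$ and its modules under this identification.

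Next I would identify the coefficient system. Because $S$ is a transitive $G$-set, it is isomorphic to $G/H_{\{s\}}$ for any $s \in S$, whence $k[S] \cong k[G/H_{\{s\}}]$; the computation of Example \ref{structure sheaves over FI}, with $|T| = 1$, then shows that the structure sheaf $\mathcal{A}$ corresponding to $k[S]$ is precisely the functor $\mathbf{X}^{\FI, 1}$ of \cite[Definition 2.17]{NR1}. By Proposition \ref{equivalence of Noetherianity}(3), every finitely generated discrete $k[S]G$-module is Noetherian as soon as each $k[S](G/H_T)$, $T \subseteq S$ finite, is a Noetherian $k[S]G$-module; and by Example \ref{coincidence} the $\mathcal{A}$-module corresponding to $k[S](G/H_T)$ is exactly the free $\FI$-module $\mathbf{F}^{\FI, |T|}$ over the coefficient system $\mathbf{X}^{\FI,1}$, in the sense of \cite[Definition 3.16]{NR1}.

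The final step is to quote \cite[Theorem 6.15]{NR1}: over the (Noetherian) coefficient system $\mathbf{X}^{\FI,1}$ every finitely generated $\FI$-module is Noetherian, so in particular each $\mathbf{F}^{\FI,d}$ is Noetherian. Translating back along the equivalence of Theorem \ref{equivalence of categories}, each $k[S](G/H_T)$ is a Noetherian $k[S]G$-module, and hence by Proposition \ref{equivalence of Noetherianity}(3) so is every finitely generated discrete $k[S]G$-module. The ``in particular'' clause is then immediate, since $k[S]$ is the cyclic $k[S]G$-module generated by $1$, hence finitely generated, hence Noetherian.

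I expect the main difficulty to be bookkeeping rather than any new idea: one must verify that the structure sheaf produced by Artin's theorem coincides \emph{on the nose} with $\mathbf{X}^{\FI,1}$, and that the free objects $k[S](G/H_T)$ go over exactly to $\mathbf{F}^{\FI,d}$. Examples \ref{structure sheaves over FI} and \ref{coincidence} are designed to do precisely this, but reconciling the two conventions --- $\FI_S$ versus the skeletal $\FI$, cosets $G/H_T$ versus injections $\Inj(T,S)$, and the left-action normalization used here versus that of \cite{NR1} --- has to be carried out with care before \cite[Theorem 6.15]{NR1} can be applied verbatim.
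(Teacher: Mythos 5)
Your proposal is correct and follows essentially the same route as the paper: identify $\mathcal{O}_G$ with $\FI_S^{\op}$ via Proposition \ref{characterization for FI}, match the structure sheaf of $k[S]$ with $\mathbf{X}^{\FI,1}$ and the modules $k[S](G/H_T)$ with $\mathbf{F}^{\FI,|T|}$ (Examples \ref{structure sheaves over FI} and \ref{coincidence}), and quote \cite[Theorem 6.15]{NR1} before translating back through Theorem \ref{equivalence of categories} and Proposition \ref{equivalence of Noetherianity}. The only (harmless) difference is organizational: you reduce first to the generators $k[S](G/H_T)$ via Proposition \ref{equivalence of Noetherianity}(3), whereas the paper applies the Nagel--R\"omer theorem directly to the sheaf $\mathcal{V}$ of an arbitrary finitely generated module and then spells out why presheaf Noetherianity yields Noetherianity as an $\mathcal{A}$-module.
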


\begin{proof}
By Proposition \ref{characterization for FI}, the orbit category $\mathcal{O}_G$ is isomorphic to the opposite category of $\FI_S$. Let $\mathcal{A}$ be the structure sheaf over the site $(\mathcal{O}_G, J_{at})$ corresponded to $A = k[S]$. By Example \ref{structure sheaves over FI}, for finite subsets $\mathcal{T}$ of $S$,
\[
\mathcal{A}(T) = k[S]^{H_T} = k[T];
\]
and for an injection $\sigma: L \to T$, $\mathcal{A}(\sigma)$ is the inclusion
\[
k[L] \longrightarrow k[T], \quad x_l \mapsto x_{\sigma(l)}.
\]
In other words, $\mathcal{A}$ is the structure sheaf $\mathbf{X}^{\FI, 1}$ in \cite[Definition 2.17]{NR1}.

Now let $V$ be a finitely generated discrete $AG$-module. By Proposition \ref{equivalence of Noetherianity}, it suffices to show the Noetherianity of the corresponded $\mathcal{A}$-module $\mathcal{V}$. But by \cite[Theorem 6.15]{NR1}, $\mathcal{V}$ as a presheaf of modules is Noetherian, which automatically implies the Noetherianity of $\mathcal{V}$ as an $\mathcal{A}$-module. Indeed, given a $\mathcal{A}$-submodule $\mathcal{W}$ of $\mathcal{V}$, we can viewed $\mathcal{W}$ as a subpresheaf of $\mathcal{V}$. Since $\mathcal{V}$ is Noetherian as a presheaf of modules, $\mathcal{W}$ as a presheaf is finitely generated. Therefore, by \cite[Proposition 3.18]{NR1}, we obtain a surjection
\[
\bigoplus_{i=1}^n \mathbf{F}^{\FI, d_i} \longrightarrow \mathcal{W}
\]
of presheaves of modules. But by Example \ref{coincidence} $\mathbf{F}^{\FI, d_i}$ is the $\mathcal{A}$-module corresponded to the discrete $AG$-module $A(G/H_{T_i})$ with $|T_i| = d_i$, so this map is also a surjection of $\mathcal{A}$-modules. It follows from Lemma \ref{equivalence of finitely generated modules} and a sheaf theoretic version of Proposition \ref{generators of modules} that $\mathcal{W}$ is a finitely generated $\mathcal{A}$-module, so $\mathcal{V}$ is Noetherian.
\end{proof}

\begin{remark}
This theorem actually holds for the bigger $k$-algebra $A = k[x_{i,s} \mid i \in [n], \, s \in S]$ where the action of $G$ on $A$ is determined by $\sigma \cdot x_{i, s} = x_{i, \sigma(s)}$ for $\sigma \in G$ and $s \in S$.
\end{remark}

\begin{remark}
Since for any infinite sets $S$ and $S'$, the categories of $\FI_S$ and $\FI_{S'}$ are equivalent, the above theorem holds for any infinite set if and only if it holds for the set $S = \mathbb{N}$. This fact has been observed in \cite[Remark 3.3]{AH} for $G = \Sym(S)$.
\end{remark}

\section{Actions of order-preserving permutations on polynomial rings}

For $S = \mathbb{N}$ and $\mathcal{I}$ the set of strictly increasing function from $\mathbb{N}$ to itself, in \cite{NR1} the authors proved that $k[S]$ is a Noetherian module over the skew monoind ring $A\mathcal{I}$. This motivates us to impose a linear order $\leqslant$ on $S$ and consider the action of the group $\Aut(S, \leqslant)$ of all order-preserving permutations on $k[S]$.

Throughout this section let $(S, \leqslant)$ be an infinite linearly ordered set, $k$ a commutative Noetherian ring, $A$ a commutative ring, and $G$ a group acting on $(S, \leqslant)$ via a group homomorphism $\rho: G \to \Aut(S, \leqslant)$. Fix $\mathcal{H}$ to be the cofinal system consisting of pointwise stabilizer subgroups
\[
H_T = \{ g \in G \mid g \cdot t = t, \, \forall t \in T \}
\]
with $T$ a finite subset of $S$. With respect to this cofinal system $G$ becomes a topological group by \cite[III.20, Theorem 3]{Hu}, and it acts on $S$ discretely. Let $\mathcal{O}_G$ be the orbit category of $G$.

\subsection{Homogeneous linear orders}

Given $a \in S$, denote by $(-, a)$ the subset of $S$ consisting of elements $b$ with $b < a$. Similarly, one defines $(a, -)$. For $a, b \in S$ with $a < b$, let $(a, b)$ be the intersection of $(-, b)$ and $(a, -)$. We call $(-, a)$, $(a, -)$ and $(a, b)$ open intervals of $S$. They are linearly ordered sets equipped with the order inherited from that on $S$.

Now we define a special linear order on $S$, which plays a central role in this section.

\begin{definition} \label{piecewise homogeneous}
A linear order $\leqslant$ on $S$ is called \textit{homogeneous} if for pairs $a < b$ and $a' < b'$ in $S$, one has $(a, b) \cong (a', b')$, $(-, a) \cong (-, a')$, and $(a, -) \cong (a', -)$ as linearly ordered sets. It is called \textit{globally homogeneous} if every open interval of $S$ of form $(-, a)$, $(a, b)$ or $(a, -)$ is isomorphic to $(S, \leqslant)$ as linearly ordered sets.
\end{definition}

A few elementary properties of (globally) homogenous linear orders are collected in the next two lemmas.

\begin{lemma} \label{unbounded and dense}
Let $\leqslant$ be a linear order on $S$.
\begin{enumerate}
\item If $\leqslant$ is homogenous, then it is dense and unbounded.

\item If $\leqslant$ is unbounded, then it is homogenous if and only if for $a < b$ and $a' < b$ in $S$, one has $(a, b) \cong (a', b')$.

\item The linear order $\leqslant$ is globally homogenous if and only if for every $a < b$ in $S$, one has $(a, b) \cong (S, \leqslant)$.
\end{enumerate}
\end{lemma}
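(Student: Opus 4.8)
The plan is to prove the three statements in order, using only elementary decompositions of open intervals together with one back-and-forth argument. Throughout, $X + Y$ denotes the ordered sum of linearly ordered sets, and I use freely the identities $(a,c) = (a,b) + \{b\} + (b,c)$ for $a < b < c$, $(a,-) = (a,b) + \{b\} + (b,-)$ for $a < b$, and $(-,b) = (-,a) + \{a\} + (a,b)$ for $a < b$. \emph{Part (1).} For density: if some $(a,b)$ were empty, then the interval clause of Definition \ref{piecewise homogeneous} would make every $(a',b')$ with $a' < b'$ empty, which is impossible since any three elements $p < q < r$ of the infinite set $S$ satisfy $q \in (p,r)$. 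For unboundedness: if $S$ had a greatest element $m$, then $(m,-) = \emptyset$, so the final-ray clause of Definition \ref{piecewise homogeneous} would force $(a,-) \cong (m,-) = \emptyset$ for every $a \in S$; then every element of $S$ would be greatest and $|S| \leqslant 1$, contradicting infiniteness. Symmetrically $S$ has no least element.

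\emph{Part (2).} One direction is immediate from Definition \ref{piecewise homogeneous}. For the converse, assume $\leqslant$ is unbounded and $(a,b) \cong (a',b')$ whenever $a < b$ and $a' < b'$. Given $a < a'$ in $S$ (the case $a = a'$ being trivial), choose $b > a'$, possible since $\leqslant$ is unbounded above; then $(a,-) = (a,b) + \bigl(\{b\} + (b,-)\bigr)$ and $(a',-) = (a',b) + \bigl(\{b\} + (b,-)\bigr)$, so gluing the isomorphism $(a,b) \cong (a',b)$ with the identity on $\{b\} + (b,-)$ gives $(a,-) \cong (a',-)$. Initial rays are handled symmetrically, anchoring below at some $c < a$. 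Hence all three clauses of Definition \ref{piecewise homogeneous} hold.

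\emph{Part (3).} One direction is immediate. Conversely, assume $(a,b) \cong (S,\leqslant)$ for all $a < b$. Then all open intervals of $S$ are mutually isomorphic, hence $\leqslant$ is dense by Part (1); moreover $\leqslant$ is unbounded, since a greatest element $m$ would make $(a,m) \cong (S,\leqslant)$ possess a greatest element, contradicting density (symmetrically for a least element). By Part (2), $\leqslant$ is homogeneous, so the initial rays of $S$ share a common order type $I$ and the final rays share a common order type $F$, and it remains to show $I \cong (S,\leqslant) \cong F$. I would prove this by a single back-and-forth construction producing isomorphisms $(S,\leqslant) \to (-,a)$ and $(S,\leqslant) \to (a,-)$ simultaneously: every two-sided open interval (both endpoints inside the order) of $S$, of $(-,a)$, or of $(a,-)$ is again $\cong (S,\leqslant)$; the initial rays of $S$ and of $(-,a)$ all have type $I$, the final rays of $S$ and of $(a,-)$ all have type $F$; and the remaining "mismatched" ends — a final ray of $S$, of type $F$, sitting opposite the uppermost interval $(x,a)$ of $(-,a)$, of type $S$, and dually for $(a,-)$ — are precisely what links the two constructions and lets each extension step go through.

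\emph{The main obstacle.} Parts (1) and (2), and the density and unboundedness in Part (3), are routine interval bookkeeping; the substance is the identification $I \cong (S,\leqslant) \cong F$. When $S$ is uncountable, Cantor's characterization of $(\mathbb{Q},\leqslant)$ is unavailable, so the back-and-forth must be run transfinitely, and the delicate point is the bookkeeping invariant — one must enumerate the three orders so that at every stage, including all limit stages, the top end-gap of one side stays isomorphic to the matching gap of the other (an initial ray facing a two-sided interval, and so on). Making that invariant propagate, rather than any isolated trick, is where I expect the work to concentrate.
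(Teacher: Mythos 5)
Parts (1) and (2) of your proposal are correct and essentially the paper's own arguments (your gluing $(a,-)=(a,b)\sqcup[b,-)$ is exactly how the paper produces the ray isomorphisms; the appeal to "Part (1)" for density inside part (3) is harmlessly circular, since $(a,b)\cong (S,\leqslant)$ is nonempty anyway). The genuine gap is precisely where you place "the main obstacle": the identification $(-,a)\cong(S,\leqslant)\cong(a,-)$ is never actually proved. You sketch a simultaneous transfinite back-and-forth and concede that propagating the end-gap invariant through limit stages is the unresolved point; as written this is not a proof, and it is doubtful the route goes through as described. For uncountable dense unbounded orders, point-by-point back-and-forth breaks at limit stages because the cut realized by the points already matched may be filled on one side and empty on the other (compare $\mathbb{R}$ with the irrationals: equinumerous, dense, unbounded, non-isomorphic), so one needs a much stronger invariant than "the remaining end-gaps match", and you do not supply one.

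The step you expect to carry the weight of the proof is in fact a two-line restriction argument, which is what the paper does. Fix $s<t$ and an isomorphism $\sigma\colon (s,t)\to (S,\leqslant)$, and pick $x\in(s,t)$ (nonempty, since $(s,t)\cong S$ is infinite). Then $\sigma$ restricts to an isomorphism from $(s,x)$ onto the initial ray $(-,\sigma(x))$ of $S$, and from $(x,t)$ onto $(\sigma(x),-)$. Hence $(-,\sigma(x))\cong(s,x)\cong(S,\leqslant)$ and $(\sigma(x),-)\cong(x,t)\cong(S,\leqslant)$; since the order is homogeneous by part (2), every initial ray is isomorphic to $(-,\sigma(x))$ and every final ray to $(\sigma(x),-)$, which is exactly the missing statement $I\cong(S,\leqslant)\cong F$. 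No transfinite construction, enumeration, or bookkeeping invariant is needed, and the argument is uniform in the cardinality of $S$.
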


\begin{proof}
(1). If $(S, \leqslant)$ has an endpoint, say, a minimal element $a$. Then for any $b \neq a$ in $S$, the open interval $(-, a)$ is empty, while $(-, b)$ is nonempty, so they cannot be isomorphic. Thus $(S, \leqslant)$ is unbounded. Similarly, if $S$ is not dense, then we can find an increasing pair $a < b$ in $S$ with $(a, b) = \emptyset$. Since $S$ is unbounded, we can find another $c \in S$ with $c > b$. Then $(a, b)$ is not isomorphic to $(a, c)$. This contradiction tells us that $(S, \leqslant)$ is dense.

(2). The only if direction is clear. For the other direction, since $\leqslant$ is unbounded, we can find a certain $c \in S$ such that $c > a$ and $c > a'$. Then one has $(a, c) \cong (a', c)$ and hence
\[
(a, -) = (a, c) \sqcup [c, -) \cong (a', c) \sqcup [c, -) = (a', -).
\]
Similarly, one can construct a desired isomorphism $(-, a) \cong (-, a')$.

(3). The only if direction is clear. For the if direction, one needs to show $(-, a) \cong (S, \leqslant) \cong (a, -)$ for $a \in S$. We prove the first isomorphism since the second one can be verified similarly.

We claim that $\leqslant$ is unbounded. Otherwise, suppose that $s_0$ is the minimal element in $S$. For any $s < t$ in $S$, let $\sigma: (s, t) \to (S, \leqslant)$ be an isomorphism. Then one can find a certain $s' \in (s, t)$ such that $\sigma(s') = s_0$. Consequently, one has $s' > s$ and $s'$ is the minimal element in $(s, t)$. But then $(s, s')$ is the empty set, which cannot be isomorphic to $(S, \leqslant)$. This contradiction tells us that $S$ has no minimal element. Similarly, one can show that it has no maximal element.

Choose an element $x \in (s, t)$. Then the isomorphism $\sigma: (s, t) \to (S, \leqslant)$ induces an isomorphism $(s, x) \cong (-, \sigma(x))$. But $(s, x) \cong (S, \leqslant)$ as well, so it remains to show that $(-, a) \cong (-, \sigma(x))$. But this is clear since by statement (2), $\leqslant$ is homogenous.
\end{proof}

\begin{example}
The if direction of statement (2) is false without the assumption that $\leqslant$ is unbounded. For example, let $S$ be the set of nonnegative real numbers with the usual linear order. Then $(a, b) \cong (a', b')$ for $a < b$ and $a' < b'$ in $S$, but clearly $(-, 0) = \emptyset \ncong (-, 1)$.

A dense and unbounded linear order might not be homogeneous. For an example, consider the following order on $\mathbb{R}$: every irrational number is strictly smaller than each rational numbers, and irrational numbers (resp., rational numbers) are ordered by the usual order.

Clearly, globally homogeneous linear orders are homogeneous, but the converse statement is false. For an example, let $S = \omega_1 \times \mathbb{Q}$ where $\omega_1$ is the least uncountable well ordered set. Impose the lexicographic order on $S$ which is unbounded. This linear order is homogenous since for every $a < b$ in $S$, the open interval $(a, b)$ is isomorphic to $(\mathbb{Q}, \leqslant)$. However, it cannot be globally homogenous since $(a, b)$ is countable while $S$ is uncountable.
\end{example}

We will characterize homogeneous linear orders via actions of order-preserving permutation groups on $S$. For this purpose, we introduce the following definition.

\begin{definition} \cite[Definition 3.16]{BMMN}
We say that the action of $G$ is \textit{$n$-homogenous} on $(S, \leqslant)$ if for a fixed $n \in \mathbb{N}$ and every pair of increasing sequences $a_1 < a_2 < \ldots < a_n$ and $b_1 < b_2 < \ldots < b_n$ in $S$, there is an element $\sigma \in G$ such that $\sigma(a_i) = b_i$ for $i \in [n]$. The action is said to be \textit{highly homogenous} if it is $n$-homogenous for every $n \in \mathbb{N}$.
\end{definition}

Properties described in the next lemma shall be well known to experts; see for instances \cite{BMMN} or \cite{Ca}. For the convenience of the reader, we give detailed proofs.

\begin{lemma} \label{elementary properties}
The following statements are equivalent:
\begin{enumerate}
\item the linear order $\leqslant$ is homogenous;

\item there is a group whose action on $(S, \leqslant)$ is 2-homogenous;

\item there is a group whose action on $(S, \leqslant)$ is highly homogenous;

\item every order-preserving injections between two finite subsets of $S$ is extendable;

\item every order-preserving bijection between two finite subsets of $S$ is extendable.
\end{enumerate}
\end{lemma}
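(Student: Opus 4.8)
The plan is to prove the cycle of equivalences $(1) \Rightarrow (5) \Rightarrow (4) \Rightarrow (3) \Rightarrow (2) \Rightarrow (1)$, where the bulk of the work lies in the implication $(1) \Rightarrow (5)$ (equivalently, in the back-and-forth construction). The trivial parts come first: $(4) \Rightarrow (5)$ is immediate since an order-preserving bijection between finite subsets is in particular an order-preserving injection; $(3) \Rightarrow (2)$ is immediate since highly homogeneous means $n$-homogeneous for all $n$, in particular for $n=2$; and $(2) \Rightarrow (1)$ follows because if $G$ acts $2$-homogeneously and $a<b$, $a'<b'$, then by Lemma \ref{unbounded and dense}(1)--(2) it suffices to produce the isomorphism $(a,b) \cong (a',b')$, and any $\sigma \in G$ with $\sigma(a)=a'$, $\sigma(b)=b'$ restricts to an order-isomorphism $(a,b) \to (a',b')$ (first one should check $\leqslant$ is unbounded, which follows because $2$-homogeneity forces $(-,a)\cong(-,a')$ in the same way, hence no endpoints).

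The implication $(5) \Rightarrow (4)$ requires a small argument: given an order-preserving injection $\sigma \colon L \to T$ between finite subsets, extend its image to a finite set $T'$ containing both $\sigma(L)$ and any chosen larger configuration so that $\sigma$ becomes the restriction of an order-preserving bijection between two finite sets of the same size — concretely, pad $L$ on both sides and interleaved so as to realize $\sigma(L)$ as an order-isomorphic copy sitting inside an enlarged domain $L' \supseteq L$ and codomain $T' = \sigma(L')$; this uses density and unboundedness of $\leqslant$ (available from Lemma \ref{unbounded and dense}(1), since $(1)$ is assumed) to find the interpolating and exterior points. Then $(5)$ gives an extension $\tilde\sigma \in G$ of the bijection $L' \to T'$, which restricts to the desired extension of $\sigma$. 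The implication $(4) \Rightarrow (3)$ is then formal: given $n$ and increasing sequences $a_1 < \cdots < a_n$ and $b_1 < \cdots < b_n$, the map $a_i \mapsto b_i$ is an order-preserving bijection (hence injection) between the finite sets $\{a_i\}$ and $\{b_i\}$, so by $(4)$ it extends to some $\sigma \in G$; letting $G$ be the group of all order-preserving permutations of $S$ (or noting the hypothesis only asks for the existence of some such group, which $\Aut(S,\leqslant)$ provides) shows the action is highly homogeneous.

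The heart of the matter is $(1) \Rightarrow (5)$: assuming $\leqslant$ is homogeneous, I must show every order-preserving bijection $\sigma$ between finite subsets $\{a_1 < \cdots < a_n\}$ and $\{b_1 < \cdots < b_n\}$ extends to an order-automorphism of $(S,\leqslant)$. The strategy is to build the automorphism piece by piece on the intervals determined by the $a_i$ and $b_i$: on the outer pieces $(-,a_1)$ and $(b_1$-side), homogeneity gives $(-,a_1) \cong (-,b_1)$; on each middle interval $(a_i, a_{i+1}) \cong (b_i, b_{i+1})$ again by homogeneity; and on the top $(a_n,-) \cong (b_n,-)$. Gluing these order-isomorphisms together with the assignments $a_i \mapsto b_i$ yields an order-preserving bijection $S \to S$, i.e.\ an element of $\Aut(S,\leqslant)$ extending $\sigma$.

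The main obstacle I anticipate is purely bookkeeping: verifying that the glued map is genuinely order-preserving across the cut points (an element of $(a_i,a_{i+1})$ must map below $b_{i+1}$ and above $b_i$, which is automatic from the chosen isomorphisms, but must be stated), and handling the choice-theoretic subtlety that these isomorphisms are chosen finitely often (only $n+1$ of them), so no appeal to choice beyond finite selection is needed — this is worth a remark since Theorem \ref{main result 3} shows the \emph{existence} of a homogeneous order is choice-sensitive, but extending a given finite partial map, once homogeneity is known, is not. One should also double-check the degenerate cases $n=0$ (identity works) and when some interval is empty (then by homogeneity the corresponding interval on the other side is also empty, by Lemma \ref{unbounded and dense}, so the gluing still goes through).
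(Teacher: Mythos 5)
Most of your cycle is sound and runs along the same lines as the paper's proof: your $(1)\Rightarrow(5)$ argument (glue the isomorphisms $(-,a_1)\cong(-,b_1)$, $(a_i,a_{i+1})\cong(b_i,b_{i+1})$, $(a_n,-)\cong(b_n,-)$ supplied by homogeneity together with $a_i\mapsto b_i$) is exactly the paper's construction, which it carries out in the slightly more general form $(1)\Rightarrow(4)$ by fixing everything outside a common bounding interval $(s,t)$; and your $(4)\Rightarrow(3)$, $(3)\Rightarrow(2)$, $(2)\Rightarrow(1)$ agree with the paper (for $(2)\Rightarrow(1)$ you do not even need the detour through Lemma \ref{unbounded and dense}: the chosen $\sigma$ restricts to all three isomorphisms $(-,a)\cong(-,a')$, $(a,b)\cong(a',b')$, $(a,-)\cong(a',-)$ required by the definition).

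The genuine problem is your step $(5)\Rightarrow(4)$. As written it is circular: you justify the padding by density and unboundedness ``available from Lemma \ref{unbounded and dense}(1), since $(1)$ is assumed,'' but at this point of the cycle $(1)\Rightarrow(5)\Rightarrow(4)\Rightarrow(3)\Rightarrow(2)\Rightarrow(1)$ the only hypothesis is $(5)$. What you actually prove is $(1)\wedge(5)\Rightarrow(4)$; combined with the rest this still gives the equivalence of $(1)$--$(4)$, but statement $(5)$ is then never shown to imply any of the others (your trivial $(4)\Rightarrow(5)$ goes the wrong way), so the five-fold equivalence is not established. The padding construction is also garbled ($T'=\sigma(L')$ makes no sense since $\sigma$ is only defined on $L$) and, more to the point, entirely unnecessary. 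The correct argument -- and the paper's -- is a one-liner: an order-preserving injection $\sigma\colon L\to T$ between finite subsets is an order-preserving bijection from $L$ onto the finite subset $\sigma(L)$, and since extendability only requires the extension $\tilde{\sigma}\in\Aut(S,\leqslant)$ to agree with $\sigma$ on $L$, any extension of this bijection furnished by $(5)$ already extends $\sigma$; no density, no unboundedness, and no appeal to $(1)$ is needed. With that replacement your proof closes the cycle and coincides, in substance, with the paper's.
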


\begin{proof}
$(1) \Rightarrow (4)$. Let $\sigma: T \to L$ be an order-preserving injection. Without loss of generality we can assume that $T \neq \emptyset$. Order elements in $T$ as $a_1 < \ldots < a_m$ and elements in $L$ as $b_1 < \ldots < b_n$ with $m \leqslant n$. By Lemma \ref{unbounded and dense}, one can choose a certain $s, t \in S$ such that $s < a_1 < a_m < t$ and $s < b_1 < b_n < t$. A desired order-preserving permutation $\tilde{\sigma}$ can be constructed via glueing $\sigma$, the following isomorphisms
\[
(s, a_1) \cong (s, \sigma(a_1)), \, \ldots, \, (a_{m-1}, a_m) \cong (\sigma(a_{m-1}), \sigma(a_m)), \, (a_m, t) \cong (\sigma(a_m), t),
\]
and identities on $(-, s)$ and $(t, -)$.

$(4) \Rightarrow (3)$. For $n \in \mathbb{N}$, $a_1 < \ldots < a_n$, and $b_1 < \ldots < b_n$, let $L$ and $T$ be the subsets of $S$ consisting of $a_i$'s and $b_i$'s respectively. Then there is a unique order-preserving bijection $\sigma: L \to T$ extending to an order-preserving bijection $\tilde{\sigma} \in \Aut(S, \leqslant)$. Consequently, the action of $\Aut(S, \leqslant)$ on $(S, \leqslant)$ is highly homogenous.

$(3) \Rightarrow (2)$ and $(4) \Rightarrow (5)$ are clear.

$(2) \Rightarrow (1)$. Take $a < b$ and $a' < b'$ in $S$ and choose an element $\sigma$ in the group such that $\sigma(a) = a'$ and $\sigma(b) = b'$. Clearly, $\sigma$ induces the desired isomorphisms $(-, a) \cong (-, a')$, $(a, b) \cong (a', b')$, and $(b, -) \cong (b', -)$.

$(5) \Rightarrow (4)$. Given an order-preserving injection $\sigma$ from a finite subset $T$ of $S$ to another finite subset $L$ of $S$, we obtain an order-preserving bijection $\sigma'$ from $T$ to $\sigma(T)$. By (5), $\sigma'$ extends to an order-preserving bijection $\tilde{\sigma}: S \to S$, which clearly extends $\sigma$.
\end{proof}

\begin{remark}
When $\leqslant$ is a homogeneous linear order on $S$, by mimicking the proof of Proposition \ref{characterization for FI}, one can show that a subgroup $G \leqslant \Aut(S, \leqslant)$ satisfies (2) or (3) of the above lemma if and only if it is a dense subgroup of $\Aut(S, \leqslant)$. However, if $\leqslant$ is not homogeneous, then dense subgroups of $\Aut(S, \leqslant)$ might not have these properties. A trivial example is $\Aut(\mathbb{N}, \leqslant)$ with the usual linear order on $\mathbb{N}$.
\end{remark}

\subsection{Noetherian results}

Let $\OI_S$ the category of finite subsets of $S$ and order-preserving injections. We can prove the following result parallel to Proposition \ref{characterization for FI}.

\begin{proposition} \label{characterization of OI}
There is an isomorphism $\varrho: \OI_S^{\op} \to \mathcal{O}_G$ with $\varrho(T) = G/H_T$ for finite subsets $T$ of $S$ if and only if the action of $G$ on $(S, \leqslant)$ is highly homogenous.
\end{proposition}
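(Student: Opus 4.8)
The plan is to mimic the proof of Proposition \ref{characterization for FI} almost verbatim, replacing ``highly transitive'' by ``highly homogenous'', ``injection'' by ``order-preserving injection'', and $\FI_S$ by $\OI_S$ throughout. The backbone is the equivalence in Lemma \ref{elementary properties}, which tells us that the action of $G$ on $(S,\leqslant)$ is highly homogenous if and only if every order-preserving injection between finite subsets of $S$ is extendable to an element of $G$, and (in particular) that $\leqslant$ must then be a homogenous linear order. So before building $\varrho$ we also need the analogue of the second statement of Lemma \ref{extendable maps}: if the action is highly homogenous, then the assignment $T \mapsto G/H_T$ is injective on finite subsets. This is proved exactly as there — if $H_T = H_{T'}$ for $T' \subsetneq T$, write $T$ as an increasing sequence whose initial segment is $T'$, perturb the largest element to an element outside $T$, and observe that no order-preserving $\sigma \in G$ can carry the one increasing sequence to the other while fixing $T'$ pointwise, contradicting high homogeneity.

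For the ``if'' direction I would construct $\varrho \colon \OI_S^{\op} \to \mathcal{O}_G$ directly. On objects, send $T$ to $G/H_T$; this is a bijection onto the object set of $\mathcal{O}_G$ by the injectivity just discussed together with the fact that $\mathcal{H}$ consists exactly of the subgroups $H_T$. On morphisms, given $\varsigma \colon T \to L$ in $\OI_S^{\op}$, i.e. an order-preserving injection $\sigma \colon L \to T$, use Lemma \ref{elementary properties}(4) to extend $\sigma$ to $\tilde\sigma \in G$; then $\tilde\sigma^{-1} H_T \tilde\sigma \leqslant H_L$, so $\tilde\sigma^{-1}$ represents a morphism $G/H_T \to G/H_L$ in $\mathcal{O}_G$, and we set $\varrho(\varsigma)$ to be this morphism. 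Well-definedness (independence of the choice of extension) is the same chain of equivalences as in the proof of Proposition \ref{characterization for FI}: $g\tilde\sigma H_L = g\tilde\tau H_L$ for all $g$ iff $\tilde\tau^{-1}\tilde\sigma \in H_L$ iff $\tilde\tau^{-1}\tilde\sigma$ fixes $L$ pointwise iff $\sigma = \tau$ on $L$. This argument simultaneously shows faithfulness. Functoriality is routine since composites of extendable order-preserving injections are extendable. Fullness holds because every morphism of $\mathcal{O}_G$ is induced by some $g \in G$, and restricting $g$ to the relevant finite set gives an order-preserving injection (order-preserving because $g \in \Aut(S,\leqslant)$) that $g$ extends; one checks the restriction lands in the target finite set by the same ``otherwise $H_{T'} = H_{T''}$'' argument used for the object bijection.

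For the ``only if'' direction, suppose such an isomorphism $\varrho$ exists. Given increasing sequences $\mathbf{t} = (t_1 < \dots < t_n)$ and $\mathbf{t}' = (t_1' < \dots < t_n')$ with underlying sets $T$ and $T'$, the objects $T$ and $T'$ are isomorphic in $\OI_S^{\op}$ (the unique order-preserving bijection), hence $G/H_T$ and $G/H_{T'}$ are isomorphic in $\mathcal{O}_G$, so there is $g \in G$ with $gH_Tg^{-1} = H_{T'}$. As in Proposition \ref{characterization for FI}, one argues $g(T) = T'$: if some $g(t_i) \notin T'$ then $H_{T'}$ would fix $T' \cup \{g(t_i)\}$, contradicting injectivity of $L \mapsto G/H_L$ on the $\OI_S$ side (which follows from $\varrho$ being a bijection on objects). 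Since $g \in \Aut(S,\leqslant)$, the restriction $g|_T \colon T \to T'$ is the order-preserving bijection, so $g(t_i) = t_i'$ for all $i$; thus the action is $n$-homogenous, and $n$ was arbitrary.

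The main obstacle — really the only subtle point — is making sure that the homogeneity of $\leqslant$ is not being silently assumed where it needs to be derived, and conversely that it is available where needed. The ``if'' direction needs extendability of order-preserving injections, which Lemma \ref{elementary properties} supplies only because high homogeneity of $G$ already forces $\leqslant$ to be homogenous (equivalence $(3)\Leftrightarrow(1)$ there); so the statement is genuinely biconditional without a separate homogeneity hypothesis, and I should flag that the hypothesis ``highly homogenous action of $G$'' is what carries the homogeneity of the order. Everything else is a transcription of the $\FI_S$ argument with ``order-preserving'' inserted, and I would present it compactly by pointing to Proposition \ref{characterization for FI} for the parts that are identical.
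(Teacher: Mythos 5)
Your overall plan coincides with the paper's proof: the only-if direction is the same argument (and, as you note, the uniqueness of the order-preserving bijection lets you skip the morphism-counting step of Proposition \ref{characterization for FI}), and the if direction builds $\varrho$ from extendability via Lemma \ref{elementary properties} together with injectivity of $T \mapsto G/H_T$. However, your proof of that injectivity fails as written. You cannot in general ``write $T$ as an increasing sequence whose initial segment is $T'$'': in the ordered setting the increasing enumeration of $T$ is forced by $\leqslant$, and $T'$ need not consist of the smallest elements of $T$. Nor does ``perturb the largest element'' suffice: if the maximum of $T$ lies in $T'$, the element of $G$ supplied by high homogeneity carrying one increasing sequence to the other does not fix $T'$ pointwise, so no contradiction with $H_T = H_{T'}$ is obtained; the contradiction requires moving a point of $T \setminus T'$ while fixing $T'$ pointwise.

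The repair --- and the paper's actual argument --- is to take $x_i \in T \setminus T'$ in the increasing enumeration $x_1 < \dots < x_n$ of $T$ and replace it by some $y \neq x_i$ lying strictly between its neighbours (or beyond the ends), which is possible because high homogeneity of the action forces $\leqslant$ to be homogeneous (Lemma \ref{elementary properties}), hence dense and unbounded (Lemma \ref{unbounded and dense}); high homogeneity then yields $\sigma \in G$ sending one increasing sequence to the other, and this $\sigma$ lies in $H_{T'}$ but not in $H_T$. Density is genuinely needed here and your transcription never invokes it --- this is exactly the point where the $\FI$ argument does not carry over verbatim. The remaining steps of your proposal (object bijection granted, well-definedness, faithfulness, fullness, functoriality, and the only-if direction) are correct and agree with the paper.
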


\begin{proof}
\textbf{The only if direction.} Let $t_1 < \ldots < t_n$ and $t_1' < \ldots < t'_n$ be two increasing sequences in $S$, and let $T$ (resp., $T'$) be the set formed by elements in the first (resp., second) sequences. Since $T$ is isomorphic to $T'$ in $\OI_S^{\op}$, $G/H_T$ and $G/H_{T'}$ are isomorphic in $\mathcal{O}_G$. Thus we can find a certain $\sigma \in G$ such that $\sigma H_T \sigma^{-1} = H_{T'}$. As in the proof of Proposition \ref{characterization for FI}, one can show that $\sigma$ is an extension of the unique increasing map from $T'$ to $T$. Thus the action of $G$ on $(S, \leqslant)$ is highly homogenous.

\textbf{The if direction.} Firstly we show that the map $T \mapsto H_T$ is a bijection. Since it is clearly surjective, it suffices to show the injectivity. Suppose that $T \neq L$ are two finite subsets of $S$ and without loss of generality assume that there is an element $x$ contained in $T$ but not contained in $L$. Order elements in $T \cup L$ as a sequence $x_1 < x_2 < \ldots < x_n$ and suppose that $x = x_i$. Note that the linear order $\leqslant$ is homogeneous by Lemma \ref{elementary properties}, and hence dense by Lemma \ref{unbounded and dense}, so we can replace $x_i$ by another element $y$ not contained in this sequence to obtain a new increasing sequence. Again by Lemma \ref{elementary properties} we can find a certain $\sigma \in G$ sending the first sequence to the second sequence. Clearly, $\sigma$ is contained in $H_L$ but not in $H_T$, establishing the injectivity of the map $T \mapsto H_T$. The desired functor $\varrho$ can be constructed by slightly modifying the proof of Proposition \ref{characterization for FI}.
\end{proof}

\begin{remark}
If $(S, \leqslant)$ is a countable dense linearly ordered set without endpoints, and $G = \Aut(S, \leqslant)$, the fact that $\mathcal{O}_G$ is isomorphic to $\OI_S^{\op}$ is already known; see \cite[Example D.3.4.11]{Jo}.
\end{remark}

\begin{theorem}
Let $A$ be a commutative ring on which $\tilde{G} = \Aut(S, \leqslant)$ and $G$ act discretely. If the action of $G$ on $(S, \leqslant)$ is highly homogenous, then
\[
A \tilde{G} \Mod^{\dis} \simeq AG \Mod^{\dis}.
\]
\end{theorem}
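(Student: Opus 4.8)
The plan is to copy the proof of Theorem~\ref{main result 1'} essentially verbatim, replacing $\FI_S$ by $\OI_S$ and Proposition~\ref{characterization for FI} by Proposition~\ref{characterization of OI}. By Theorem~\ref{equivalence of categories} there are equivalences $AG\Mod^{\dis}\simeq\Sh(\mathcal{O}_G,J_{at},\mathcal{A})$ and $A\tilde G\Mod^{\dis}\simeq\Sh(\mathcal{O}_{\tilde G},J_{at},\tilde{\mathcal{A}})$, where $\mathcal{A}$ and $\tilde{\mathcal{A}}$ are the structure sheaves associated to $A$ with its discrete $G$- respectively $\tilde G$-action (the $G$-action being the composite of $\rho$ with the $\tilde G$-action). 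So it suffices to build an isomorphism of ringed sites $(\mathcal{O}_G,J_{at},\mathcal{A})\cong(\mathcal{O}_{\tilde G},J_{at},\tilde{\mathcal{A}})$, which then induces an equivalence of the associated sheaf categories.

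First I would treat the underlying categories. Since $G$ acts highly homogenously on $(S,\leqslant)$, Lemma~\ref{elementary properties} shows that $\leqslant$ is a homogenous linear order, and hence, again by Lemma~\ref{elementary properties} (or directly by the construction in its proof), the natural action of $\tilde G=\Aut(S,\leqslant)$ on $(S,\leqslant)$ is highly homogenous as well. Applying Proposition~\ref{characterization of OI} once to $G$ and once to $\tilde G$ yields isomorphisms $\varrho\colon\OI_S^{\op}\to\mathcal{O}_G$ and $\tilde\varrho\colon\OI_S^{\op}\to\mathcal{O}_{\tilde G}$ with $\varrho(T)=G/H_T$ and $\tilde\varrho(T)=\tilde G/\tilde H_T$ for finite $T\subseteq S$, where $\tilde H_T\leqslant\tilde G$ is the pointwise stabilizer of $T$. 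Composing, $\Phi=\tilde\varrho\circ\varrho^{-1}\colon\mathcal{O}_G\to\mathcal{O}_{\tilde G}$ is an isomorphism of categories carrying $G/H_T$ to $\tilde G/\tilde H_T$; since an isomorphism of categories takes the atomic topology to the atomic topology, $\Phi$ is an isomorphism of the sites $(\mathcal{O}_G,J_{at})$ and $(\mathcal{O}_{\tilde G},J_{at})$.

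The remaining point, and the only non-formal one, is that $\Phi$ intertwines $\mathcal{A}$ and $\tilde{\mathcal{A}}$. On objects, $\mathcal{A}(G/H_T)=A^{H_T}$ and $\tilde{\mathcal{A}}(\tilde G/\tilde H_T)=A^{\tilde H_T}$, both regarded as subrings of $A$, so one must check $A^{H_T}=A^{\tilde H_T}$. Here $\rho(H_T)\subseteq\tilde H_T$ gives $A^{\tilde H_T}\subseteq A^{H_T}$ at once. For the converse, the remark following Lemma~\ref{elementary properties} gives that $\rho(G)$ is dense in $\tilde G$; intersecting with the open subgroup $\tilde H_T$ shows that $\rho(H_T)=\rho(G)\cap\tilde H_T$ is dense in $\tilde H_T$. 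Since the $\tilde G$-action on $A$ is discrete, every stabilizer $\{g\in\tilde G\mid g\cdot a=a\}$ is an open, hence closed, subgroup, so any $a\in A$ fixed by $\rho(H_T)$ is fixed by its closure $\tilde H_T$, giving $A^{H_T}\subseteq A^{\tilde H_T}$. Compatibility with the restriction maps is then immediate: a morphism $\sigma_g$ of $\mathcal{O}_G$ and the corresponding morphism of $\mathcal{O}_{\tilde G}$ are represented by the same element $g$, and both act on structure sheaves by the assignment $a\mapsto g^{-1}\cdot a$. Therefore $\Phi$ is an isomorphism of ringed sites, inducing $\Sh(\mathcal{O}_G,J_{at},\mathcal{A})\simeq\Sh(\mathcal{O}_{\tilde G},J_{at},\tilde{\mathcal{A}})$, and combining this with the two equivalences from Theorem~\ref{equivalence of categories} finishes the proof. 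The hard part, such as it is, is precisely this last ringed-site identification — where the hypotheses (highly homogenous implies dense image) and the discreteness of the actions on $A$ are genuinely used; everything else is a direct transcription of earlier results in the paper.
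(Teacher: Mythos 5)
Your proposal is correct and takes essentially the same route as the paper, whose proof is just the one-line citation of Theorem \ref{equivalence of categories} and Proposition \ref{characterization of OI} that you expand. The only genuinely non-formal step, the identification of the structure sheaves $A^{H_T}=A^{\tilde{H}_T}$ via density of $\rho(G)$ in $\Aut(S,\leqslant)$ and the fact that open stabilizers are closed, is left implicit in the paper and is correctly supplied in your argument.
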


\begin{proof}
The conclusion follows from Theorem \ref{equivalence of categories} and Proposition \ref{characterization of OI}.
\end{proof}

We are ready to prove the following result.

\begin{theorem} \label{Noetherian wrt homogeneous order}
If the action of $G$ on $(S, \leqslant)$ is highly homogenous, then every finitely generated discrete $k[S]G$-module is Noetherian. In particular, $k[S]$ is a Noetherian $k[S]G$-module.
\end{theorem}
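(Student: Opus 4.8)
The plan is to run the argument of Theorem \ref{Noetherian wrt permutation groups} verbatim, with $\FI_S$ replaced by $\OI_S$ and Proposition \ref{characterization for FI} replaced by Proposition \ref{characterization of OI}. Since the action of $G$ on $(S, \leqslant)$ is highly homogenous, Proposition \ref{characterization of OI} provides an isomorphism $\mathcal{O}_G \cong \OI_S^{\op}$ sending a finite subset $T \subseteq S$ to $G/H_T$. Under this identification the first task is to compute the structure sheaf $\mathcal{A}$ over $(\mathcal{O}_G, J_{at})$ attached to $A = k[S]$: I claim that $\mathcal{A}(T) = k[S]^{H_T} = k[T]$ for every finite $T \subseteq S$, and that for an order-preserving injection $\sigma\colon L \to T$ the map $\mathcal{A}(\sigma)\colon k[L] \to k[T]$ is the inclusion $x_l \mapsto x_{\sigma(l)}$; that is, $\mathcal{A}$ is the functor $\mathbf{X}^{\OI, 1}$ of \cite[Definition 2.17]{NR1}.

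The inclusion $k[T] \subseteq k[S]^{H_T}$ and the description of $\mathcal{A}$ on morphisms are immediate from the definitions. The reverse inclusion is the one genuine point, and it goes as in Example \ref{structure sheaves over FI}: if a monomial of some $P \in k[S]^{H_T}$ involves an indeterminate $x_s$ with $s \notin T$, then $s$ lies in one of the open intervals cut out by $T$ (of the form $(-,a)$, $(a,b)$, or $(a,-)$ with $a, b \in T$), and by density of $\leqslant$ (Lemma \ref{unbounded and dense}) together with the $2$-homogeneity afforded by Lemma \ref{elementary properties}, the $H_T$-orbit of $s$ inside that interval is infinite; since $P$ is $H_T$-fixed it would then contain infinitely many indeterminates, which is absurd. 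Hence $\mathcal{A}(T) = k[T]$.

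With $\mathcal{A} = \mathbf{X}^{\OI, 1}$ established, I would conclude exactly as before. Let $V$ be a finitely generated discrete $k[S]G$-module and $\mathcal{V}$ the corresponding $\mathcal{A}$-module; by Proposition \ref{equivalence of Noetherianity} it suffices to show $\mathcal{V}$ is Noetherian. By \cite[Theorem 6.15]{NR1} applied in the $\OI$ setting, $\mathcal{V}$ is Noetherian as a presheaf of $\mathbf{X}^{\OI,1}$-modules. Given an $\mathcal{A}$-submodule $\mathcal{W} \subseteq \mathcal{V}$, view it as a subpresheaf; it is then finitely generated as a presheaf, so \cite[Proposition 3.18]{NR1} gives a surjection $\bigoplus_{i=1}^{n} \mathbf{F}^{\OI, d_i} \to \mathcal{W}$ of presheaves of modules. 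The computation of Example \ref{coincidence}, carried out with $\OI_S$ in place of $\FI_S$, identifies $\mathbf{F}^{\OI, d_i}$ with the $\mathcal{A}$-module attached to the discrete $k[S]G$-module $k[S](G/H_{T_i})$ for a finite subset $T_i$ with $|T_i| = d_i$, so the displayed surjection is one of $\mathcal{A}$-modules; Lemma \ref{equivalence of finitely generated modules} and a sheaf-theoretic version of Proposition \ref{generators of modules} then show $\mathcal{W}$ is a finitely generated $\mathcal{A}$-module. Thus $\mathcal{V}$ is Noetherian, and the last assertion follows on taking $V = k[S]$.

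The only non-formal step is the identification $k[S]^{H_T} = k[T]$ above, which is where density and $2$-homogeneity of $\leqslant$ actually enter; everything else is a routine transcription of Section 4, modulo trusting that Nagel–R\"{o}mer's Noetherianity theorem \cite[Theorem 6.15]{NR1} covers the twisted $\OI$-coefficient functor $\mathbf{X}^{\OI,1}$ in the same generality as the $\FI$ case, which is the form in which it is stated there.
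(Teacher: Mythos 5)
Your proposal is correct and follows essentially the same route as the paper: identify $\mathcal{O}_G$ with $\OI_S^{\op}$ via Proposition \ref{characterization of OI}, verify that the structure sheaf attached to $k[S]$ is $\mathbf{X}^{\OI,1}$ (the orbit-counting argument you give for $k[S]^{H_T}=k[T]$ is exactly the adaptation of Example \ref{structure sheaves over FI} that the paper invokes implicitly), and then transfer Nagel--R\"omer's presheaf Noetherianity from \cite[Theorem 6.15]{NR1} to the sheaf/module setting via Proposition \ref{equivalence of Noetherianity}, just as in the proof of Theorem \ref{Noetherian wrt permutation groups}. The paper's own proof is merely a terser version of the same argument, and \cite[Theorem 6.15]{NR1} does indeed cover the $\OI$ case in the generality you need.
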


\begin{proof}
Applying the same argument as that in the proof of Theorem \ref{Noetherian wrt permutation groups}, one gets the following description of the structure sheaf $\mathcal{A}$ over the site $(\OI_S, J_{at})$ corresponded to the algebra $A = k[S]$: for a finite subset $T$ of $S$,
\[
\mathcal{A}(T) = A^{H_T} = k[T];
\]
for an order-preserving injection $\alpha: L \to T$, $\mathcal{A}(\alpha)$ is the inclusion
\[
k[L] \longrightarrow k[T], \quad x_l \mapsto x_{\alpha(l)}.
\]
In other words, $\mathcal{A}$ coincides with the structure sheaf $\mathbf{X}^{\OI, 1}$ in \cite[Definition 2.17]{NR1}.

Let $V$ be a finitely generated discrete $AG$-module. By Proposition \ref{equivalence of Noetherianity}, it suffices to show that the corresponded $\mathcal{A}$-module $\mathcal{V}$ is Noetherian. But this has been established in \cite[Theorem 6.15]{NR1}. Indeed, the conclusion proved \cite{NR1} is that $\mathcal{V}$ as a presheaf of modules  is Noetherian, automatically implying the Noetherianity of $\mathcal{V}$ as an $\mathcal{A}$-module.
\end{proof}

\begin{remark}
This theorem holds for the bigger $k$-algebra $A = k[x_{i,s} \mid i \in [n], \, s \in S]$ where the action of $G$ on $A$ is determined by $\sigma \cdot x_{i, s} = x_{i, \sigma(s)}$ for $\sigma \in G$ and $s \in S$.
\end{remark}

In the rest of this subsection we give a few applications of Theorem \ref{Noetherian wrt homogeneous order}.

\begin{corollary}
Let $\leqslant$ be a homogenous linear order on $S$, and $\mathcal{I}$ the monoid of order-preserving injections on $(S, \leqslant)$. Then $k[S]$ is a Noetherian module over the skew monoid ring $k[S]\mathcal{I}$.
\end{corollary}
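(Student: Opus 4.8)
The plan is to deduce this from Theorem \ref{Noetherian wrt homogeneous order} by passing from the \emph{group} $\tilde{G} = \Aut(S,\leqslant)$ of order-preserving bijections of $S$ to the \emph{monoid} $\mathcal{I}$ of order-preserving injections of $S$. The key observation is that $\tilde{G}$ is a submonoid of $\mathcal{I}$, so every $k[S]\mathcal{I}$-submodule of $k[S]$ is in particular a $k[S]\tilde{G}$-submodule, and hence the poset of $k[S]\mathcal{I}$-submodules of $k[S]$ embeds into the poset of $\tilde{G}$-invariant ideals of $k[S]$; the ascending chain condition on the latter, which we already have, then forces it on the former.

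First I would record that, since $\leqslant$ is a homogeneous linear order on $S$, the implication $(1)\Rightarrow(3)$ of Lemma \ref{elementary properties} shows that the natural action of $\tilde{G} = \Aut(S,\leqslant)$ on $(S,\leqslant)$ is highly homogenous. Consequently Theorem \ref{Noetherian wrt homogeneous order}, applied with $G = \tilde{G}$, tells us that $k[S]$ is a Noetherian $k[S]\tilde{G}$-module; by Proposition \ref{equivalence of Noetherianity}(2) this is equivalent to saying that the poset of $\tilde{G}$-invariant ideals of $k[S]$ satisfies the ascending chain condition.

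Next I would unravel the two module structures. By definition of the skew monoid ring $k[S]\mathcal{I}$, a $k[S]\mathcal{I}$-submodule of $k[S]$ is exactly an ideal $I$ of $k[S]$ with $h \cdot I \subseteq I$ for every $h \in \mathcal{I}$; similarly a $k[S]\tilde{G}$-submodule of $k[S]$ is an ideal $I$ with $g \cdot I \subseteq I$ for every $g \in \tilde{G}$. Since every order-preserving bijection of $S$ is an order-preserving injection, we have $\tilde{G} \subseteq \mathcal{I}$ as monoids acting on $k[S]$, so $\mathcal{I}$-invariance implies $\tilde{G}$-invariance. Therefore every ascending chain $I_1 \subseteq I_2 \subseteq \cdots$ of $k[S]\mathcal{I}$-submodules of $k[S]$ is in particular an ascending chain of $\tilde{G}$-invariant ideals, which stabilizes by the previous paragraph. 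Hence $k[S]$ is a Noetherian $k[S]\mathcal{I}$-module.

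I do not expect a serious obstacle here: the argument is a purely formal comparison of two posets of submodules, the smaller sitting inside the larger because $\tilde{G} \leqslant \mathcal{I}$. The only point requiring a moment of care is the bookkeeping identification of a "$k[S]\mathcal{I}$-submodule of $k[S]$" with an "$\mathcal{I}$-invariant ideal of $k[S]$", which is immediate once one writes out the multiplication rule of the skew monoid ring, together with the (equally immediate) observation that $\Aut(S,\leqslant)$ is contained in the monoid of order-preserving injections. If one prefers a statement in the language of finite generation, one may additionally note that an ideal generated as an ideal by the $\tilde{G}$-orbits of finitely many elements is, a fortiori, generated by the (larger) $\mathcal{I}$-orbits of those same elements, so "finitely generated up to $\tilde{G}$-action" implies "finitely generated up to $\mathcal{I}$-action" for $\mathcal{I}$-invariant ideals; but this refinement is not needed for the corollary as stated.
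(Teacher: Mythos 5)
Your proposal is correct and follows essentially the same route as the paper: both reduce to Theorem \ref{Noetherian wrt homogeneous order} via the inclusion $\Aut(S,\leqslant) \subseteq \mathcal{I}$, so that every $k[S]\mathcal{I}$-submodule of $k[S]$ is in particular $\Aut(S,\leqslant)$-invariant. The paper phrases the final step through finite generation of each such submodule (as in the remark at the end of your write-up) rather than through the ascending chain condition, but this is only a cosmetic difference.
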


\begin{proof}
Let $V$ be a $k[S]\mathcal{I}$-submodule of $k[S]$ and $G = \Aut(S, \leqslant)$. Then $V$ viewed as a $k[S]G$-submodule of $k[S]$ shall be finitely generated by Theorem \ref{Noetherian wrt homogeneous order}. Since $G$ is a submonoid of $\mathcal{I}$, it follows that $V$ as a $k[S] \mathcal{I}$-module is finitely generated as well. Therefore, $k[S]$ is a Noetherian $k[S]\mathcal{I}$-module.
\end{proof}

\begin{remark}
This corollary does not imply statement (i) of \cite[Corollary 6.21]{NR1} since the linear order on $\mathbb{N}$ is not homogenous.
\end{remark}

Note that a homogenous linear order $\leqslant$ on $S$ determines a topology $\mathcal{T}_{\mathrm{ord}}$ on $S$ with a basis
\[
\mathcal{B} = \{ (a, b) \mid a < b, a, b \in S \}.
\]

\begin{corollary}
Let $G = \Homeo(S, \mathcal{T}_{\mathrm{ord}})$ be the group of self-homeomorphisms. Then $k[S]$ is a Noetherian $k[S]G$-module.
\end{corollary}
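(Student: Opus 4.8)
The plan is to deduce this from Theorem~\ref{Noetherian wrt homogeneous order} by a restriction-of-scalars argument of exactly the same shape as the proof of the preceding corollary for the skew monoid ring $k[S]\mathcal{I}$. The crucial point is that the order-preserving permutation group $\Aut(S,\leqslant)$ is a subgroup of $G = \Homeo(S, \mathcal{T}_{\mathrm{ord}})$: any order-preserving bijection $\sigma \colon S \to S$ sends each basic open interval $(a,b)$ to the basic open interval $(\sigma(a), \sigma(b))$, and likewise $\sigma^{-1}$, so $\sigma$ is a self-homeomorphism of $(S, \mathcal{T}_{\mathrm{ord}})$. Note that one cannot simply invoke Theorem~\ref{Noetherian wrt homogeneous order} with $G$ itself in the role of the group, because the standing hypothesis of this section requires an action through a homomorphism $\rho \colon G \to \Aut(S,\leqslant)$, whereas a self-homeomorphism of the order topology need not preserve (nor reverse) $\leqslant$ --- for instance, on $S = \mathbb{Q}$ the order topology is that of a countable metrizable space with no isolated points, whose homeomorphism group is far larger than $\Aut(\mathbb{Q},\leqslant)$. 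The detour through $\Aut(S,\leqslant)$ circumvents this.

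First I would observe that, since $\leqslant$ is homogeneous, the action of $\Aut(S,\leqslant)$ on $(S,\leqslant)$ is highly homogeneous; this is the implication $(1)\Rightarrow(3)$ of Lemma~\ref{elementary properties}, whose proof in fact produces $\Aut(S,\leqslant)$ itself as the required group. Applying Theorem~\ref{Noetherian wrt homogeneous order} with the group $\Aut(S,\leqslant)$ then shows that $k[S]$ is a Noetherian $k[S]\Aut(S,\leqslant)$-module. (All the relevant modules are discrete: $k[S]$ is a discrete $k[S]\Aut(S,\leqslant)$-module and a discrete $k[S]G$-module with respect to the topologies given by pointwise stabilizers of finite subsets of $S$, since the stabilizer of a polynomial contains the pointwise stabilizer of its finite index set, and submodules of discrete modules are discrete.)

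Now let $V$ be an arbitrary $k[S]G$-submodule of $k[S]$; it is a discrete $k[S]G$-module. Restricting scalars along the inclusion of skew group rings $k[S]\Aut(S,\leqslant) \hookrightarrow k[S]G$ --- which is legitimate precisely because $\Aut(S,\leqslant) \leqslant G$ --- exhibits $V$ as a discrete $k[S]\Aut(S,\leqslant)$-submodule of $k[S]$, hence finitely generated over $k[S]\Aut(S,\leqslant)$ by the previous paragraph. Any finite generating set over the smaller ring $k[S]\Aut(S,\leqslant)$ is a fortiori a finite generating set over $k[S]G$, so $V$ is finitely generated as a $k[S]G$-module. Since $V$ was arbitrary, $k[S]$ is a Noetherian $k[S]G$-module. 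I do not expect any genuine obstacle here; the only things demanding care are the verification that $\Aut(S,\leqslant)$ really sits inside $\Homeo(S,\mathcal{T}_{\mathrm{ord}})$ and the routine checking that every module in play is discrete, so that the finite-generation formalism of Section~3 applies.
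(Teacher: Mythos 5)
Your proposal is correct and matches the paper's argument: both rest on the observation that $\Aut(S,\leqslant) \leqslant \Homeo(S,\mathcal{T}_{\mathrm{ord}})$ (order-preserving bijections are homeomorphisms for the order topology) and then transfer Noetherianity from the smaller group to the larger one via Theorem~\ref{Noetherian wrt homogeneous order}. The paper phrases the transfer as ``$K$-invariant ideals form a subposet of $H$-invariant ideals when $H \leqslant K$,'' which is the same restriction-of-scalars argument you spell out.
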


\begin{proof}
The proof is based on a very transparent observation: if $H \leqslant K$, then $K$-invariant ideals of $k[S]$ form a subposet of the poset of $H$-invariant ideals of $k[S]$. Consequently, if $k[S]$ is Noetherian up to the action of $H$, it is also Noetherian up to the action of $K$. Therefore, it is enough to show that $\Aut(S, \leqslant)$ is a subgroup of $G$, namely every order-preserving bijection is a self-homeomorphism on $(S, \mathcal{T}_{\mathrm{ord}})$. But this is trivially true.
\end{proof}

As an important consequence of Theorem \ref{Noetherian wrt homogeneous order}, we show that $k[S]$ is a Noetherian discrete $k[S]G$-module whenever the action of $G$ on $S$ is highly homogenous. For this purpose, we introduce the following fundamental result of Cameron; see \cite[Theorem 6.1]{Ca76}, \cite[3.10]{Ca}, or \cite[Theorem 13.1]{BMMN}.

\begin{theorem} \label{classification}
Suppose that the action of $G \leqslant \Sym(S)$ on $S$ is highly homogenous. Then either $G$ is dense subgroup of $\Sym(S)$, or a dense subgroup of the group of permutations preserving one of the following relations on $S$:
\begin{enumerate}
\item a dense linear order $\leqslant$,

\item a dense linear betweenness relation $\mathcal{B}$,

\item a dense cyclic order $\mathcal{K}$,

\item and a dense separation relation $\mathcal{S}$.
\end{enumerate}
\end{theorem}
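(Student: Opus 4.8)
The plan is to follow Cameron's route \cite{Ca76}, organizing the argument around the \emph{degree of transitivity} of the action and converting each failure of transitivity into a $G$-invariant relation; since every step is purely combinatorial one may work directly with $S$, with no reduction to the countable case. First I would let $m$ be the least integer for which the action of $G$ on $S$ is not $m$-transitive, i.e.\ $G$ is not transitive on entrywise distinct $m$-tuples. If no such $m$ exists, then $G$ is highly transitive, hence a dense subgroup of $\Sym(S)$ by the equivalence $(2)\Leftrightarrow(3)$ of Proposition \ref{characterization for FI}, and the first alternative of the theorem holds; so assume $m$ is finite, and note $m\geqslant 2$ since $G$ is transitive.

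The next step is to extract an invariant relation from the failure of $m$-transitivity. The group $S_m$ acts on entrywise distinct $m$-tuples by permuting coordinates, and this action commutes with the diagonal $G$-action; hence $S_m$ permutes the $G$-orbits on such tuples, and transitively, because $G$ is $m$-homogeneous. It follows that the permutation group $\Sigma\leqslant S_m$ induced on an $m$-element subset by its setwise $G$-stabilizer is well defined up to conjugacy, is a \emph{proper} subgroup of $S_m$ (properness being equivalent, under $m$-homogeneity, to the failure of $m$-transitivity), and that the $G$-orbit decomposition of the entrywise distinct $m$-tuples is the coset space $S_m/\Sigma$; each such orbit is a $G$-invariant $m$-ary relation on $S$.

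The heart of the proof, and the step I expect to be the genuine obstacle, is the classification of the pairs $(m,\Sigma)$ that can occur. For $m=2$ the subgroup $\Sigma$ is trivial, so the two $G$-orbits on ordered pairs equip $S$ with a $G$-invariant tournament; restricting it to a $3$-element set yields, by $3$-homogeneity, a single isomorphism type, and the ``directed $3$-cycle'' type is impossible (no tournament on four vertices has all four of its $3$-subsets cyclic, as a short parity count on out-degrees shows), so every $3$-subset is transitively ordered and the tournament is a linear order, dense by Lemmas \ref{elementary properties} and \ref{unbounded and dense}. For $m\geqslant 3$ the action is $2$-transitive, and one must analyze the proper subgroup $\Sigma\leqslant S_m$ together with the compatibility constraints linking the groups $\Sigma$ across different values of $m$; Cameron's theorem is precisely that this forces $m\leqslant 4$, with $|\Sigma|\in\{2,3\}$ for $m=3$ and $\Sigma$ the dihedral group of order $8$ for $m=4$. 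When $m=3$ and $|\Sigma|=2$, the fixed point of the transposition picks out a $G$-invariant ``middle'' element of each $3$-set, giving a betweenness relation; when $|\Sigma|=3$, the cyclic group $\Sigma$ is a $G$-invariant orientation of each $3$-set, giving a cyclic order; when $m=4$ one likewise obtains a separation relation, $\Sigma$ being the stabilizer of a distinguished splitting of the $4$-set into two pairs. In each case the four-point coherence axioms required of these relations are checked using $4$-homogeneity, and the associated order is dense for the same reason as above. This classification is a delicate induction exploiting the constraints imposed by $k$-homogeneity simultaneously for all $k$; the remaining steps are bookkeeping.

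Finally I would prove the density statement. Let $R$ denote the invariant relation just produced and $H$ its full automorphism group, so $G\leqslant H$; then $H$ acts highly homogeneously as well (since $G$ does) and with the same degree of transitivity $m$. To see that $G$ is dense in $H$ for the topology of pointwise convergence it suffices to show that $G$ and $H$ have the same orbits on entrywise distinct finite tuples: given such a tuple and a target tuple in the same $H$-orbit, $n$-homogeneity of $G$ carries the underlying $n$-set onto the target $n$-set, and it then remains to check that the setwise $G$-stabilizer of an $n$-set induces on it the full automorphism group of the induced finite substructure of $(S,R)$ — equivalently, that the analogue of $\Sigma$ for $n$-element sets coincides for $G$ and for $H$. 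Since $(S,R)$ is a homogeneous relational structure and the case $n=m$ is exactly the content of the classification above, this comparison can be carried out for all $n$; matching all the orbits then yields that $G$ is dense in $H$, completing the proof.
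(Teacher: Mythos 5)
First, a point of comparison: the paper does not prove this statement at all --- it is imported verbatim from Cameron's classification (\cite[Theorem 6.1]{Ca76}, \cite[3.10]{Ca}, \cite[Theorem 13.1]{BMMN}), so the only question is whether your sketch actually constitutes a proof of Cameron's theorem. It does not. Your reduction to the least degree $m$ of non-transitivity, the identification of the induced group $\Sigma\leqslant S_m$ on an $m$-set (well defined up to conjugacy, with the orbits on distinct $m$-tuples corresponding to $\Sigma\backslash S_m$), and the $m=2$ case (the invariant tournament, the exclusion of the all-cyclic type by the out-degree count on four points, hence a $G$-invariant linear order, dense by Lemmas \ref{elementary properties} and \ref{unbounded and dense}, with density of $G$ in $\Aut(S,\leqslant)$ via equality of orbits on finite tuples) are all correct and are indeed the opening moves of Cameron's argument. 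But at the declared ``heart of the proof'' you write that ``Cameron's theorem is precisely that this forces $m\leqslant 4$'' with the stated possibilities for $\Sigma$; this invokes the very statement to be proved. Nothing in your text shows why a $2$-transitive, highly homogeneous but not highly transitive group must preserve a betweenness, circular order, or separation relation, nor why $m\geqslant 5$ is impossible. In Cameron's paper and in \cite{BMMN} this is the substantial part of the argument (a genuine combinatorial analysis of the orbit structure on ordered tuples, using Ramsey-type counting and the interplay of $k$-homogeneity for several $k$ simultaneously), and ``a delicate induction \ldots the remaining steps are bookkeeping'' does not supply it.

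The final density step is also incomplete outside the linear-order case. To conclude that $G$ is dense in $H=\Aut(S,R)$ you need $G$ and $H$ to have the same orbits on entrywise distinct $n$-tuples for \emph{every} $n$, i.e.\ that the group induced on an $n$-set by its setwise $G$-stabilizer is the full automorphism group of the induced substructure of $(S,R)$; your classification (even if completed) only controls $n=m$, and the assertion that ``this comparison can be carried out for all $n$'' because $(S,R)$ is homogeneous is itself something that has to be proved from high homogeneity of $G$ (for the linear order it follows as you say, since all increasing $n$-tuples lie in one $G$-orbit, but for the betweenness, cyclic and separation relations the corresponding statement about orbits of the finite induced structures needs an argument). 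So the proposal reproduces the architecture of Cameron's proof but leaves its central classification step, and part of the density step, unestablished; as written it is a gap, not an alternative proof.
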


The word ``dense" appearing at different positions of the above theorem have different meanings. The first two are of a topological sense: we impose the discrete topology on $S$ and the compact-open topology on $\Sym(S)$. The other ones have the usual meaning in order theory.

\begin{corollary} \label{highly homogenous result}
If the action of $G$ on $S$ via a group homomorphism $\rho: G \to \Sym(S)$ is highly homogenous, then $k[S]$ is a Noetherian $k[S]G$-module.
\end{corollary}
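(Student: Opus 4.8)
The plan is to run $G$ through Cameron's classification (Theorem \ref{classification}) and reduce, case by case, to the two Noetherianity results already in hand: Theorem \ref{Noetherian wrt permutation groups} for the highly transitive case and Theorem \ref{Noetherian wrt homogeneous order} for the highly homogenous case over a homogeneous linear order. Since the $G$-action on $k[S]$ factors through $\rho$, I may replace $G$ by its image and assume $G\leqslant\Sym(S)$. Two soft tools will be used repeatedly: (i) the observation from the proof of the preceding corollary that if $G_0\leqslant G$ and $k[S]$ is Noetherian up to $G_0$, then it is Noetherian up to $G$, because the $G$-invariant ideals form a subposet of the $G_0$-invariant ones; and (ii) the elementary topological fact that if $D$ is dense in a topological group $K$ and $U\leqslant K$ is open, then $D\cap U$ is dense in $U$.

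Cameron's theorem splits into five cases. If $G$ is dense in $\Sym(S)$, then by Proposition \ref{characterization for FI} the action of $G$ on $S$ is highly transitive and Theorem \ref{Noetherian wrt permutation groups} applies. If $G$ is dense in $\Aut(S,\leqslant)$ for a dense linear order $\leqslant$, note that since $G$ is transitive on $S^{\{n\}}$ and preserves $\leqslant$, any element of $G$ carrying one increasing $n$-tuple onto another does so order-preservingly; hence $G$ is highly homogenous on $(S,\leqslant)$, so $\leqslant$ is homogeneous by Lemma \ref{elementary properties}, and Theorem \ref{Noetherian wrt homogeneous order} applies directly.

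The remaining three cases — a dense linear betweenness relation $\mathcal{B}$, a dense cyclic order $\mathcal{K}$, a dense separation relation $\mathcal{S}$ — reduce to the linear-order case. For $\mathcal{B}$, choose one of the (at most two) linear orders $\leqslant$ inducing $\mathcal{B}$; fixing $a<b$, the set $\{g\in\Aut(S,\mathcal{B})\mid g(a)<g(b)\}$ equals $\Aut(S,\leqslant)$ and is open of index at most two in $\Aut(S,\mathcal{B})$, so by (ii) the subgroup $G\cap\Aut(S,\leqslant)$ is dense in $\Aut(S,\leqslant)$; after checking that $\leqslant$ is homogeneous, the remark following Lemma \ref{elementary properties} makes $G\cap\Aut(S,\leqslant)$ highly homogenous on $(S,\leqslant)$, and Theorem \ref{Noetherian wrt homogeneous order} together with (i) finishes this case. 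For $\mathcal{K}$, fix a point $p\in S$: cutting $\mathcal{K}$ at $p$ yields a dense, unbounded, and (one argues) homogeneous linear order $\leqslant_p$ on $S\setminus\{p\}$, and $\Aut(S\setminus\{p\},\leqslant_p)$ is precisely the (open) stabilizer of $p$ in $\Aut(S,\mathcal{K})$; thus $\Stab_G(p)$ is dense in it and hence highly homogenous on $(S\setminus\{p\},\leqslant_p)$. Applying Theorem \ref{Noetherian wrt homogeneous order} over the Noetherian coefficient ring $k[x_p]$ — the relevant structure sheaf is $k[x_p]\otimes_k\mathbf{X}^{\OI,1}$, i.e.\ \cite[Theorem 6.15]{NR1} applied with base ring $k[x_p]$ in place of $k$ — shows $k[S]=k[x_p]\otimes_k k[S\setminus\{p\}]$ is Noetherian up to $\Stab_G(p)$, hence up to $G$ by (i). Finally, $\Aut(S,\mathcal{K})$ is an open index-two subgroup of $\Aut(S,\mathcal{S})$, so passing to $G\cap\Aut(S,\mathcal{K})$ via (ii) and invoking the cyclic case settles $\mathcal{S}$.

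The main obstacle is the bookkeeping in the last three cases: I must verify that the linear orders produced there (a compatible order for $\mathcal{B}$; $\mathcal{K}$ cut at a point) are genuinely homogeneous in the sense of Definition \ref{piecewise homogeneous}, not merely dense and unbounded — the example in the text shows the latter is strictly weaker. This is where the two ``density'' hypotheses of Cameron's theorem (order-theoretic density of the preserved relation, plus density of $G$ in the ambient automorphism group) must be combined with the high homogeneity of $G$ on $S$; the cleanest route seems to be to check the interval-isomorphism criterion of Lemma \ref{unbounded and dense}(2) by transporting intervals with suitable elements of $G$, using transitivity of $G$ on $S^{\{2\}}$ and $S^{\{3\}}$. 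A secondary but routine point is confirming that the finite-index subgroups in play ($\Aut(S,\leqslant)\leqslant\Aut(S,\mathcal{B})$, $\Aut(S,\mathcal{K})\leqslant\Aut(S,\mathcal{S})$, and point stabilizers) are open, so that tool (ii) applies.
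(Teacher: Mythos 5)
Your overall architecture is sound and, in its endgame, matches the paper's: the highly transitive case via Proposition \ref{characterization for FI} and Theorem \ref{Noetherian wrt permutation groups}, the linear-order case via Theorem \ref{Noetherian wrt homogeneous order}, betweenness reduced to the order-preserving subgroup of index at most two, separation reduced to the cyclic case by an index-two subgroup, and the cyclic case reduced to a linear order on $S\setminus\{p\}$ with Noetherian coefficient ring $k[x_p]$ --- the last being exactly the paper's trick $k[\mathbb{Q}]=k[x_0][\mathbb{Q}_0]$ with $H=\Aut(\mathbb{Q}_0,\leqslant)\leqslant G_2$. Where you genuinely diverge is in how the three ``exotic'' cases are handled: the paper does not argue on the given $(S,\mathcal{B})$, $(S,\mathcal{K})$, $(S,\mathcal{S})$ at all, but first invokes the orbit-category equivalence (Theorem \ref{equivalence of categories}) together with $\aleph_0$-categoricity of these structures and the downward L\"{o}wenheim--Skolem theorem to conclude that the problem is independent of the choice of $S$, of the relation, and of the dense subgroup, and then settles only the three concrete groups $\Aut(\mathbb{Q},\mathcal{B})$, $\Aut(\mathbb{Q},\mathcal{K})$, $\Aut(\mathbb{Q},\mathcal{S})$ using the structural results of \cite[Theorems 11.7, 11.9, 11.11]{BMMN}. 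Your route avoids the model theory entirely, which is a legitimate and arguably more self-contained alternative, but the price is precisely the obligation you flag yourself: for an arbitrary $S$ you must prove that the linear order compatible with $\mathcal{B}$, and the cut order $\leqslant_p$ obtained from $\mathcal{K}$, are homogeneous in the sense of Definition \ref{piecewise homogeneous}; for $\mathbb{Q}$ this is classical and the paper never has to face it.

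That flagged step is a real gap in your write-up, not a formality, but it is fillable along the lines you indicate. For betweenness: transitivity of $\Aut(S,\mathcal{B})$ on $2$-subsets gives, for any two increasing pairs, either an isomorphism or an anti-isomorphism $(a,b)\to(c,d)$; if two non-isomorphic interval ``types'' $X\not\cong X^{\mathrm{rev}}$ occurred, transitivity on $3$-subsets would force the pair of types attached to any triple $a<b<c$ to lie in a two-element set closed under the reversal $(t_1,t_2)\mapsto(-t_2,-t_1)$, and a chain of overlapping triples through four or more points (available since the order is dense and unbounded) then forces all intervals to have the same type, a contradiction; Lemma \ref{unbounded and dense}(2) then gives homogeneity. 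An analogous (slightly longer, since arcs with endpoint $p$ must be compared with arcs avoiding $p$) argument handles the cut order in the cyclic case. The remaining ingredients you use --- that $\Aut(S,\leqslant)$ is open of index at most two in $\Aut(S,\mathcal{B})$, that $\Aut(S,\mathcal{K})$ is open of index at most two in $\Aut(S,\mathcal{S})$, that $\Stab_{\Aut(S,\mathcal{K})}(p)$ restricts isomorphically to $\Aut(S\setminus\{p\},\leqslant_p)$, that density passes to open subgroups, and that Theorem \ref{Noetherian wrt homogeneous order} applies over the Noetherian base $k[x_p]$ --- are all correct. So: a correct strategy, genuinely different from the paper in replacing the categoricity/L\"{o}wenheim--Skolem transfer by direct combinatorics, with one substantive verification left as a sketch that you would need to write out in full.
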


\begin{proof}
Without loss of generality we can assume that $G$ is a subgroup of $\Sym(S)$. We have established the conclusion for dense subgroups of $\Sym(S)$ and $\Aut(S, \leqslant)$ in Theorem \ref{main result 1'} and Theorem \ref{Noetherian wrt homogeneous order}. Thus it remains to check the conclusion for dense subgroups of $\Aut(S, \mathcal{B})$, $\Aut(S, \mathcal{K})$, or $\Aut(S, \mathcal{S})$. But as we did before, we can show the following fact: If $G$ is a dense subgroup of $\Aut(S, \mathcal{B})$, then its orbit category is equivalent to the category of finite sets of $S$ equipped with linear betweenness relations  and structure-preserving injections. Therefore, by the downward L\"{o}wenheim–Skolem theorem (see \cite[Section 2.3]{Marker}), Theorem \ref{equivalence of categories}, and the well known fact that this structure is $\aleph_0$-categorical (see \cite[Section 2.5, (2.10)]{Ca}), the conclusion is independent of particular choices of sets $S$, linear betweenness relations on $S$, and dense subgroups $G \leqslant \Aut(S, \mathcal{B})$. Similarly, one can prove the independence result for $\mathcal{K}$ or $\mathcal{S}$. Consequently, we only need to check the conclusion for the following three groups:
\begin{itemize}
\item $G_1 = \Aut(\mathbb{Q}, \mathcal{B})$,
\item $G_2 = \Aut(\mathbb{Q}, \mathcal{K})$,
\item $G_3 = \Aut(\mathbb{Q}, \mathcal{S})$,
\end{itemize}
where $\mathcal{B}$, $\mathcal{K}$ and $\mathcal{S}$ are induced from the natural linear order on $\mathbb{Q}$. For details, see \cite[Section 11.3]{BMMN}.

By \cite[Theorems 11.7]{BMMN}, $G_1$ contains $\Aut(\mathbb{Q}, \leqslant)$ as a normal subgroup of index 2, so the conclusion holds for $G_1$. By \cite[Theorem 11.11]{BMMN}, $G_3$ contains $G_2$ as a normal subgroup of index 2, so it is enough to prove the conclusion for $G_2$. But by \cite[Theorem 11.9]{BMMN}, $G_2$ contains $H = \Aut(\mathbb{Q}_0, \leqslant)$ as a subgroup with $\mathbb{Q}_0 = \mathbb{Q} \setminus \{0 \}$. Note that $(\mathbb{Q}_0, \leqslant)$ is isomorphic to $(\mathbb{Q}, \leqslant)$. Thus by Theorem \ref{Noetherian wrt homogeneous order}, $k[\mathbb{Q}] = k[x_0][\mathbb{Q}_0]$ is Noetherian under the action of $H$, so it is also Noetherian under the highly homogenous action of $G_2$ on $\mathbb{Q}_0$. This finishes the proof.
\end{proof}

This corollary does not hold when we replace highly homogenous groups by oligomorphic groups.

\begin{example} \label{example}
The group $G = \Sym(\mathbb{N})$ is highly transitive on $\mathbb{N}$, and hence is oligomorphic on $\mathbb{N} \times \mathbb{N}$. Since $G$ can be viewed as a subgroup of $G \times G$ via the diagonal embedding, we deduce that $G \times G$ is oligomorphic on $\mathbb{N} \times \mathbb{N}$ as well. However, it is well known that $k[\mathbb{N} \times \mathbb{N}]$ is not Noetherian up to the action of $G \times G$; see \cite[Example 3.8]{HS}. Thus the condition that $G$ is oligomorphic on $S$ is not sufficient to guarantee the noetherianity of $k[S]$ up to the action of $G$.

On the other hand, there do exist groups $G$ such that the action of $G$ on $S$ is not highly homogenous but $k[S]$ is Noetherian up to this action. Indeed, for an arbitrary $x \in S$, the action of $G = \Sym(S_x)$ is not even transitive on $S$, but $k[S]$ is Noetherian up to the action of $G$.
\end{example}

\subsection{An equivalent statement of the axiom of choice}

The existence of globally homogenous linear order on $\mathbb{Q}$ (or any countable set) and $\mathbb{R}$ (or any set with the same cardinality as $\mathbb{R}$) is known. The following lemma asserts that we can construct a globally homogenous linear order for every infinite set.

\begin{lemma} \label{key lemma}
If we admit the axiom of choice, then for every infinite cardinal $\kappa$, there is a globally homogeneous linear order of size $\kappa$.
\end{lemma}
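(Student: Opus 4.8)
I would first use the axiom of choice to pass from sets to cardinals: every infinite set is in bijection with a unique infinite cardinal $\kappa$, and transporting a linear order along a bijection preserves isomorphism types of intervals, so it suffices to construct, for each infinite cardinal $\kappa$, a globally homogeneous linear order of size $\kappa$. By Lemma \ref{unbounded and dense}(3) this amounts to building a linear order $L_\kappa$ with $|L_\kappa|=\kappa$ such that $(a,b)\cong L_\kappa$ for every pair $a<b$ in $L_\kappa$ — so the rays $(-,a)$ and $(a,-)$ never need separate treatment. The base case $\kappa=\aleph_0$ is $(\mathbb{Q},\leqslant)$: every nonempty open interval of $\mathbb{Q}$ is a countable dense linearly ordered set without endpoints, hence isomorphic to $\mathbb{Q}$ by Cantor's theorem (the result quoted before Theorem \ref{main result 2}).

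For $\kappa$ uncountable I would proceed by transfinite recursion, realizing $L=L_\kappa$ as an increasing union $L=\bigcup_{\alpha<\kappa}M_\alpha$ of linear orders: $M_0=\mathbb{Q}$, $M_\lambda=\bigcup_{\alpha<\lambda}M_\alpha$ at limits, and at a successor stage $M_{\alpha+1}\supseteq M_\alpha$ is a dense linear order without endpoints, of cardinality $\le\kappa$, obtained from $M_\alpha$ by ``filling in'' a prescribed family of cuts, the family being read off a fixed $\kappa$-long bookkeeping enumeration of all $(\text{stage},\text{cut})$ demands, arranged so that (i) $M_\alpha$ is a dense suborder of $M_{\alpha+1}$, (ii) $M_\alpha$ is neither cofinal nor coinitial in $M_{\alpha+1}$, and (iii) every open interval with endpoints in $M_\alpha$ acquires a point of $M_{\alpha+1}\setminus M_\alpha$. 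A routine cardinal arithmetic computation — this is where the axiom of choice is visibly used, through $\kappa\cdot\kappa=\kappa$ and $\mu^{<\omega}\le\kappa$ for $\mu\le\kappa$ — guarantees $|M_\alpha|\le\kappa$ for all $\alpha<\kappa$ and hence $|L_\kappa|=\kappa$. (One could instead present $L_\kappa$ as an explicit model, say a suitable lexicographically ordered set of finitely supported functions $\mathbb{Q}\to\Sigma$ for an ``alphabet'' $\Sigma$ of size $\kappa$; but the recursive description makes the self-similarity argument below more transparent, and density and the absence of endpoints fall out of (i)–(iii) as easy warm-ups.)

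The crux — and the step I expect to be the main obstacle — is the verification of global homogeneity of $L_\kappa$. Given $a<b$ in $L_\kappa$, fix $\alpha_0<\kappa$ with $a,b\in M_{\alpha_0}$; then $(a,b)_{L_\kappa}=\bigcup_{\alpha_0\le\alpha<\kappa}(a,b)_{M_\alpha}$, and I would check that the chain $\bigl((a,b)_{M_\alpha}\bigr)_{\alpha_0\le\alpha<\kappa}$ — of order type $\kappa$ over the index set, continuous at limits, with stages that inherit (i)–(iii) — is itself a chain of exactly the kind manufactured by the recursion. The construction is then closed off by a uniqueness lemma: any two such $\kappa$-chains have order-isomorphic unions, proved by a transfinite back-and-forth carried out \emph{along the two chains}, matching $M_\alpha$-level data to $N_\alpha$-level data stage by stage, rather than by the finite-tuple back-and-forth that suffices for $\mathbb{Q}$. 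The genuinely delicate points are: engineering the successor step and the bookkeeping so that passing to a subinterval provably reproduces the ``same kind of chain''; and making the along-the-chains back-and-forth go through \emph{without} requiring $L_\kappa$ to realize all small cuts — which would force $|L_\kappa|\ge 2^{\aleph_0}$ and is impossible for, e.g., $\kappa=\aleph_1$ when $\neg\mathrm{CH}$ holds. Everything else (density, no endpoints, the cardinality count) is comparatively mechanical.
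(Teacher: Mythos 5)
Your first paragraph is fine (the reduction via a bijection, Lemma \ref{unbounded and dense}(3), and the base case $(\mathbb{Q},\leqslant)$), but the core of your argument is not actually there. Everything rests on the ``uniqueness lemma'' that any two $\kappa$-chains of the kind produced by your recursion have isomorphic unions, together with the claim that for $a<b$ the induced chain $\bigl((a,b)_{M_\alpha}\bigr)_{\alpha_0\leqslant\alpha<\kappa}$ is again a chain of that kind; and you state both as the ``genuinely delicate points'' without resolving them. This is not a routine detail to be deferred: for a back-and-forth carried out along the chains you need the partial isomorphisms matched at stage $\alpha$ to extend at stage $\alpha+1$, which forces the successor step to be canonical (isomorphism-invariant) and the bookkeeping to be compatible with restriction to subintervals -- an arbitrary enumeration of $(\text{stage},\text{cut})$ demands has no reason to send ``cuts scheduled to be filled'' on one side to such cuts on the other, nor to guarantee that the subinterval chain makes progress at the same stages. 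The obvious ways to make the step canonical (fill \emph{all} cuts of countable character, or take some completion) run into exactly the cardinality obstruction you mention for $\kappa=\aleph_1$ under $\neg\mathrm{CH}$. So as written the proposal establishes density, unboundedness and $|L_\kappa|=\kappa$, but not global homogeneity, which is the whole content of Lemma \ref{key lemma}.

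The paper sidesteps all of this with a short model-theoretic argument: it proves the stronger statement that every infinite cardinality is attained by a linearly ordered field $(S,+,\cdot,0,1,<,h)$ in which, for all $a<b$ and $c<d$, some affine map $x\mapsto px+q$ carries $(a,b)$ order-isomorphically onto $(c,d)$, and the extra function symbol $h$ restricts to an order-isomorphism $(0,1)\to S$. This strengthened homogeneity is first order, $(\mathbb{Q},+,\cdot,<,h)$ is a countable model (Cantor's theorem supplies $h$), and the L\"{o}wenheim--Skolem theorem \cite{Marker} yields models of size $\kappa$ for every infinite $\kappa$; each such model is a globally homogeneous linear order, e.g.\ by Lemma \ref{unbounded and dense}(3). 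If you want to salvage your approach, the cheapest fix is of the same flavor: rather than engineering a bespoke chain construction and a new back-and-forth, produce any structure of size $\kappa$ in which global homogeneity is witnessed by uniformly definable maps (an ordered field already suffices, since affine maps identify all bounded intervals and maps built from $x\mapsto -1/x$ identify a bounded interval with the whole line), and get size $\kappa$ from compactness/L\"{o}wenheim--Skolem instead of from transfinite recursion.
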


\begin{proof}
Since the property that every nontrivial open interval is isomorphic to the whole set is not a first order sentence, instead we prove a stronger conclusion: every infinite set $S$ can be equipped with a linearly ordered field structure $(S, +, \cdot, 0, 1, <, h)$ satisfying the additional global homogeneity: for every $a < b$ and $c < d$ in $S$, there exists $p, q \in S$ such that $x \mapsto px+q$ is an order-preserving bijection from $(a, b)$ to $(c, d)$; $h$ is an order-preserving bijection from $(0, 1)$ to $S$. Clearly, this strengthened global homogeneity is first order definable. Hence, every structure of above theory is also a globally homogenous linear order. Since, for some suitable $h$, $(\mathbb{Q}, +, \cdot, <, h)$ is a countable model of this theory by Cantor's theorem, the desired conclusion follows from L\"{o}wenheim–Skolem theorem (see \cite[Section 2.3]{Marker}).
\end{proof}

To prove Theorem \ref{main result 3}, we need to introduce a notion in set theory called the \textit{Hartogs number}. Explicitly, we define
\[
\aleph(X) = \{\alpha \text{ is an ordinal } \mid \text{ there is an injection from $\alpha$ to } X \},
\]
which is the least ordinal that can not be injectively mapped into $X$. Note that the existence of this number only relies on Zermelo-Fraenkel set theory {\rm ZF} alone and does not require the axiom of choice; for details, see \cite{Hartogs}.

\begin{theorem} \label{homogeneous linear order is possible}
The axiom of choice is equivalent to the statement that every infinite set admits a globally homogenous linear order.
\end{theorem}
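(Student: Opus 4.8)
The forward implication is already contained in Lemma~\ref{key lemma}: under the axiom of choice every infinite set is in bijection with an infinite cardinal, and transporting the globally homogeneous linear order produced there along such a bijection yields a globally homogeneous linear order on the given set. So the substance of the theorem is the converse, and the plan is to prove that the stated principle implies that every infinite set is well-orderable, one of the standard forms of the axiom of choice. Using the Hartogs number --- whose existence, as recalled above, needs only ZF --- a set $X$ is well-orderable if and only if there is an injection $X\hookrightarrow\aleph(X)$ (an injection into an ordinal makes $X$ well-orderable, and conversely a well-ordering of $X$ has order type $<\aleph(X)$). Hence it suffices, granting the principle, to produce such an injection for an arbitrary infinite $X$.

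First I would fix an infinite set $X$ and set $\kappa=\aleph(X)$; since $\omega$ injects into $X$, the set $\kappa$ is uncountable and intrinsically well-ordered, and by the defining property of the Hartogs number no injection $\kappa\hookrightarrow X$ exists. The point is to apply the principle not to $X$ but to a carefully assembled auxiliary infinite set $S$ --- the basic candidate being the disjoint union $X\sqcup\kappa$, possibly replaced by a more ``inflated'' set such as $(X\times\kappa)\sqcup\kappa$ in order to force the two constituents to interleave --- thereby obtaining a globally homogeneous linear order $\leqslant$ on $S$. By Lemma~\ref{unbounded and dense} this order is dense and unbounded, and by global homogeneity every proper interval of $(S,\leqslant)$ is order-isomorphic to the whole of $(S,\leqslant)$. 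One then extracts an injection $X\hookrightarrow\kappa$ from the position of the scaffold $\kappa$ inside $(S,\leqslant)$: to each $x\in X$ attach the cut it induces on the restriction of $\leqslant$ to $\kappa$, and refine this datum through a transfinite process in which the intrinsic well-order of $\kappa$ supplies, at every stage, a canonical element --- so that no instance of choice is invoked --- until the refined data separate distinct elements of $X$; a final bookkeeping step then converts the separating data into an honest injection into $\kappa$. Having done this for an arbitrary infinite $X$, we conclude that every set is well-orderable, which is the axiom of choice.

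The heart of the matter, and where I expect the real difficulty, is this extraction step, which must reconcile three competing demands: (i) the auxiliary set $S$ must be chosen so that in \emph{every} globally homogeneous order on it the copy of $\kappa$ is unavoidably dense among the $X$-points --- otherwise the $X$-points may clump together with no scaffold element between them and the cut map fails to be injective, a failure that global homogeneity alone does not prevent; (ii) every selection made during the refinement must be forced purely by the structure already at hand, namely the order $\leqslant$ together with the intrinsic well-order of $\kappa$ --- in particular the limit stages, where with choice one would simply pick an upper bound, must be handled canonically --- since the whole purpose is to derive choice, not to presuppose it; and (iii) the length of the refinement must be controlled by ZF means so that it is a genuine set-sized process with usable terminal data. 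Demands (i) and (ii) pull against each other --- a more inflated $S$ buys more density but makes canonical selection harder --- and striking the right balance is the principal obstacle. Once it is in place, the remaining points (density and unboundedness of $\leqslant$, stability of the cuts under the refinement, and injectivity of the terminal map into $\kappa$) are routine.
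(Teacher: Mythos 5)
Your forward direction is fine and matches the paper (transport the order of Lemma~\ref{key lemma} along a bijection). The converse, however, has a genuine gap: the ``extraction step'' you describe is not an argument but a list of unresolved obligations, and the obstacles you name in (i)--(iii) are real. With $S=X\sqcup\kappa$ and $\kappa=\aleph(X)$ there is no reason why distinct points of $X$ should induce distinct cuts on the copy of $\kappa$ --- all of $X$ could sit inside a single gap of $\kappa$ --- and no canonical transfinite ``refinement'' is exhibited that would repair this without invoking choice; inflating $S$ does not obviously help, and you concede as much. So as written the proposal does not prove that the principle implies well-orderability of $X$.

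The paper's proof closes exactly this gap by running the cut map in the opposite direction and by taking the Hartogs number of the \emph{power set}: set $\kappa=\aleph(P(X))$, put a globally homogeneous order $\leqslant$ on $X\cup\kappa$ (with $X\cap\kappa=\emptyset$), and define $f:\kappa\to P(X)$ by $f(\alpha)=\{x\in X\mid x<\alpha\}$. By the defining property of $\kappa$, $f$ cannot be injective, so there are ordinals $\alpha<\beta$ with $f(\alpha)=f(\beta)$, i.e.\ the interval $(\alpha,\beta)$ of $(X\cup\kappa,\leqslant)$ contains no point of $X$ and hence lies entirely in $\kappa$; it is therefore well-orderable. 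Global homogeneity is then used once, decisively: $(\alpha,\beta)\cong(X\cup\kappa,\leqslant)$, and transporting the well-order of $(\alpha,\beta)$ along this isomorphism well-orders $X\cup\kappa$, hence $X$. Note that this uses only a pigeonhole failure of injectivity (no injectivity of any cut map into $\kappa$ is needed, which is precisely the point your demand (i) could not secure), and no choices are made anywhere. If you want to salvage your outline, replacing $\aleph(X)$ by $\aleph(P(X))$ and looking for a collision of cuts, rather than an injection of $X$ via cuts, is the missing idea.
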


\begin{proof}
Suppose that the axiom of choice holds, and let $S$ be an infinite set with cardinality $\kappa$. By Lemma \ref{key lemma}, one can construct a globally homogenous linear order $\leqslant$ on another set $S'$ with the same cardinality. Then one can define a linear order on $S$ via an arbitrary bijection from $S$ to $S'$, which is clearly globally homogenous.

Now suppose that every infinite set can be equipped with a globally homogeneous linear order. We want to to show that every infinite set admits a well order, which is equivalent to the axiom of choice. Fix an infinite set $X$, and set $\kappa = \aleph(P(X))$ where $P(X)$ is the power set of $X$. Without loss of generality, we may assume that $X \cap \kappa = \emptyset$.

Fix a globally homogeneous linear order $\leqslant$ on $X \cup \kappa$, and define a map $f: \kappa \rightarrow P(X)$ by
\[
f(\alpha) = \{x \in X \mid x < \alpha\}.
\]
By the definition of $\kappa$, $f$ cannot be an injection, so there are $\alpha < \beta$ such that $f(\alpha) = f(\beta)$. This happens if and only if the interval $(\alpha, \beta)$ is disjoint from $X$, so $(\alpha, \beta) \subseteq \kappa$. Since $\kappa$ is an ordinal, it is well orderable, so is $(\alpha, \beta)$. But since $\leqslant$ is globally homogeneous, one can find an isomorphism $\rho: (\alpha, \beta) \to (X \cup \kappa, \leqslant)$. Using this bijection and a well order $\preccurlyeq$ on $(\alpha, \beta)$, one can define a well order on $X \cup \kappa$, which induces a well order on $X$.
\end{proof}

\end{document}